\newcommand{\blue}{}
\definecolor{mygreen}{rgb}{0.1,0.75,0.2}
\providecommand{\bbs}[1]{\left(#1\right)}
 \newtheorem{thm}{Theorem}[section]
 \newtheorem{cor}[thm]{Corollary}
 \newtheorem{lem}[thm]{Lemma}
 \newtheorem{prop}[thm]{Proposition}
  \newtheorem{example}[thm]{Example}
 \newtheorem{defn}[thm]{Definition}
 \newtheorem{rem}[thm]{Remark}
 \newtheorem{exm}[thm]{Example}
 \numberwithin{equation}{section}
\DeclareMathOperator{\st}{s.t.~}
\providecommand{\bbs}[1]{\left(#1\right)}
\newcommand{\la}{\langle}
\newcommand{\ra}{\rangle}
\newcommand{\pt}{\partial}
\newcommand{\eps}{\varepsilon}
\newcommand{\ud}{\,\mathrm{d}}
\newcommand{\ddiv}{\overline{\rm div}_{\omega}\,}
\newcommand{\dnabla}{\overline{\nabla}_{\omega}}
\newcommand{\sF}{\mathcal{F}}
\newcommand{\sG}{\mathcal{G}}
\newcommand{\bR}{\mathbb{R}}
\newcommand{\bT}{\mathbb{T}}
\newcommand{\sU}{\mathcal{U}}
\newcommand{\sP}{\mathcal{P}}
\newcommand{\sH}{\mathcal{H}}
\newcommand{\sX}{\mathcal{X}}
\newcommand{\mL}{\mathbb{K}}
\newcommand{\ts}{\mathsf{T}}
\begin{document}

\title[Master equations for finite state mean field games]{Master equations for finite state mean field games with nonlinear activations}

\author[Gao]{Yuan Gao}
\address{Department of Mathematics, Purdue University, West Lafayette, IN}
\email{gao662@purdue.edu}

\author[Li]{Wuchen Li}
\address{Department of Mathematics, University of South Carolina, Columbia, SC}
\email{wuchen@mailbox.sc.edu}

\author[Liu]{Jian-Guo Liu}
\address{Department of Mathematics and Department of
  Physics, Duke University, Durham, NC}
\email{jliu@math.duke.edu}

 \keywords{Value function;  Reversible Markov chains; Onsager principle; Nash equilibrium; Optimal control; Hamilton-Jacobi equations on Wasserstein spaces.}
 
%
%
%
%

\thanks{YG is supported by NSF under award DMS-2204288. JGL is supported by NSF under award DMS-2106988. WL is supported by  AFOSR MURI FA9550-18-1-0502,  AFOSR YIP award FA9550-23-1-0087, and NSF RTG: 2038080.}

\date{\today}

\maketitle

\begin{abstract}
We formulate a class of mean field games on a finite state space with variational principles   resembling those in continuous-state mean field games. We construct a controlled continuity equation featuring a nonlinear activation function on graphs induced by finite-state  reversible continuous time Markov chains. In these graphs, each edge is weighted by the transition probability and invariant measure of the original process. Using these controlled dynamics on the graph and the dynamic programming principle for the value function, we derive several key components: the mean field game systems, the functional Hamilton-Jacobi equations, and the master equations on a finite probability space for potential mean field games. {\blue The existence and uniqueness of solutions to the potential mean field game system are ensured through a convex optimization reformulation in terms of the density-flux pair.} We also derive   variational principles for the master equations of both non-potential games and mixed games on a  continuous state space. Finally, we offer several concrete examples of discrete mean field game dynamics on a two-point space, complete with closed-formula solutions. These examples include discrete Wasserstein distances, mean field planning, and potential mean field games. 
\end{abstract}



\section{Introduction}

Mean field games (MFGs) model the dynamical behavior of a large number of identical players through their state distribution and strategies. The mean field game theories were   developed independently by \textsc{Lasry and Lions} \cite{lasry2007} and \textsc{Caines, Huang, and Malhame} \cite{MRP}. Nowadays, MFGs play a crucial role in various modeling applications such as mathematical finance, swarm robotics/drones, pandemic control, generative models, and Markov decision processes in reinforcement learning \cite{CD_book, CHOB, GLLMFG, LLO0, LLO1, LLO2, GM_MFG}.
In general, Nash equilibrium (NE) \cite{nash1950equilibrium} requires a player to anticipate other players' strategies. However, obtaining complete information on others' strategies is nearly impossible. On one hand, \textsc{Caines, Huang, and Malhame} \cite{MRP} first noted that as the number of players approaches infinity, only the state distribution of the population over a time interval is sufficient to determine a Nash equilibrium for the original finite many players, up to an \( \epsilon \) error. On the other hand, \textsc{Lasry and Lions} \cite{lasry2007} directly study the limit \( N \to +\infty \) of the Nash equilibrium system for \( N \)-players.
Building on this, MFGs with a continuum of players assume that an indistinguishable individual player implements strategies based only on the state distribution of the population, without explicitly anticipating the strategies of other individual players \cite{MRP, lasry2007, MFG}. This setup is formulated as a value function for the indistinguishable individual, adhering to a dynamic programming principle within a fixed time horizon \( T \), given the state distribution of the population over the time interval \([0, T]\); see \eqref{optimal}.
Moreover, if the state distribution of the population reaches a NE, meaning that the population and their strategy satisfy a mean field game system (see \cite{MFG} and \eqref{MFG2}), then individual players must adopt a strategy consistent with that of the population in order to achieve the optimal payoff function; see \cite{MFG} and Proposition \ref{prop2.2}. This approach allows one to study MFGs through the mean field game system.

In particular, a potential mean field game \cite{MFG, monderer1996potential} is formulated as an optimal control problem in a probability density space equipped with Wasserstein metrics \cite{am2006, BB, Villani2009_optimal}, in which both the running cost and the terminal profit are functionals of population density. Moreover, the value function for populations satisfies  a functional Hamilton-Jacobi equation (HJE) in Wasserstein space (see \eqref{HJErho} and also \cite{cardaliaguet2019master, GNT}). The MFG system \eqref{MFG2} serves as the associated bi-characteristics for the functional Hamiltonian system \cite[Theorem 3.12]{cardaliaguet2019master}.
The   value function also provides   a viscosity solution to the functional HJE  \cite{fleming06, Gangbo2015}. For more careful studies on the regularity of this viscosity solution, we refer to \cite{Gangbo2015, Gangbo2}.

Beyond the MFG system, \textsc{Lions} introduced the master equation in his renowned courses at Coll\'ege de France \cite{lions}. The solution to the master equation is a scalar value function that describes optimal strategies and dynamics for both individuals and populations in MFGs. The master equation fully characterizes the dynamics of individuals and population behaviors. Moreover, its solution can be used to construct an approximate solution to the Nash system, which describes the Nash equilibrium for \(N\)-player games \cite[Section 1.2]{cardaliaguet2019master}. 

In the special case of potential games, the master equation \eqref{Master} is a derived equation by taking the derivatives in the functional HJE \eqref{HJErho}; see \cite[Theorem 3.12]{cardaliaguet2019master}.
The existence and uniqueness of the classical solution to the deterministic master equation for potential games were   very recently established by \textsc{Gangbo and Meszaros} \cite{Gangbo2}, based on the regularities of the viscosity solution to the functional HJE under certain assumptions. For the existence and uniqueness of classical solutions to the first/second-order master equations in MFGs with noise/common noise, we refer to \cite{cardaliaguet2019master}.

In non-potential games, we  derive the variational principle in  Proposition \ref{prop_master_C_g} for master equation \eqref{gen_master}, whose solution is the value function of  individual players  guided by the mean field trajectories of the  population density function in MFG system \eqref{MFG2}; see also the mixed game case in Proposition \ref{prop_mix_value} for the master equation \eqref{gen_master2}.
Compared with the potential games,  for which the variational principle for master equation always holds since the master equation is a derived equation from the functional HJE,  one no longer has a functional HJE for    general games. Nevertheless, the solution to the MFG system \eqref{MFG2} still serves as   guiding trajectories in the variational principle for the master equation. This variational principle   gives a Lax-Oleinik type solution representation for master equation \eqref{gen_master}.

 However, the aforementioned MFG dynamics are defined only on a continuous state space. Many models, such as  evolutionary game theory as discussed by \textsc{Hofbauer and Sigmund} \cite{hofbauer1988theory}, are usually formulated on a finite state space. A prime example is the Stag-Hunt game, in which the state space contains only two points,   either in a cooperative or   non-cooperative way.

In this paper, we address the following question: \textit{Can potential and non-potential mean field game models be formulated on a finite state space using variational principles? Furthermore, what are the corresponding functional Hamilton-Jacobi equations and master equations on finite states?}

To tackle this question, we propose an alternative approach that considers a class of MFG dynamics on a weighted, undirected graph. This graph is formulated from a reversible Markov chain, describing the underlying state-to-state relationships along with transition barriers. A critical aspect of this discrete version of MFG is the effective description of controlled dynamics on graphs while incorporating the original Markov chain and energy landscape. We formulate an associated optimal control problem, where the value function is computed by subtracting the running cost from the terminal profit.

One natural method to define controlled dynamics on the graph is through a controlled continuous-time Markov chain. Here, the control variable dictates the transition rate. This method allows us to maintain a Markov chain with a controlled \( Q \)-matrix, which is an approach first studied in \cite{Gomes2013, gueant2015existence} along with the corresponding HJE and Nash equilibria. Examples and derived master equations are also discussed by \textsc{Carmona and Delarue} \cite{CD_book, FD}. However, it is important to note that the original graph structure  and the energy landscape cannot be preserved  in the controlled \( Q \)-matrix, as pointed out in Remark \ref{rem312}.

As an alternative, we introduce a different approach for controlled dynamics on graphs, inspired by Mass's discrete Benamou-Breiner formulation \eqref{BBD} of the discrete Wasserstein distance on a finite probability space \cite{maas2011gradient}. In \textsc{Mass}'s formulation, the controlled dynamics are represented by a continuity equation \eqref{D_C_E} on the graph, which incorporates a nonlinear activation function \( \theta(x,y) \). The  graph structure and the invariant measure of the original jump process are included as weight functions within this nonlinear continuity equation. This continuity equation is motivated by reformulating the forward equation for finite-state, time-continuous reversible Markov chains as either Onsager's gradient flows on graphs with quadratic dissipation, or as generalized gradient flows on graphs; see discussions in Sections \ref{sec3.1} and \ref{sec3.2}.

 We next formulate the potential mean field games as an optimal control problem, with controlled dynamics given by a continuity equation \eqref{D_C_E} on a graph with a nonlinear activation function \( \theta(x,y) \). {\blue The nonlinear activation function $\theta$   can be chosen to represent  either the Onsager type dissipation or the dissipation function in a generalized gradient flow; see Section \ref{sec3.2}}.  {\blue In the case of potential games, we formulate this optimal control problem as a convex optimization problem in terms of the density-flux pair, with a linear constraint. Based on this reformulation,  the existence and uniqueness of solutions are shown in Proposition \ref{prop:convex}.} We then derive the associated functional HJE \eqref{HJErhoD} on the finite probability space. Its bi-characteristics are given by the potential mean field game system \eqref{MFG_discrete_H0}. We also present a derived master equation \eqref{discrete_master_1} by taking derivatives in the functional HJE \eqref{HJErhoD} for the case of potential games.

We extend   the master equation \eqref{gen_master_dis_u} to non-potential games on a finite state space and develop the relation between the discrete MFG system and the general master equation  on a finite state space.   Finally, we illustrate mean field game dynamics on a two-point state space. Examples in Section \ref{sec5} encompass generalized Wasserstein distances, mean field planning problems, and potential MFGs on graphs.

As previously mentioned, numerous studies on mean field games in both continuous and finite state spaces are available in the literature. We focus our comparisons on developments particularly related to MFGs on graphs. \cite{Gomes2013} studies continuous-time, finite-state MFGs, and this study has been extended to include discrete-time, finite-state MFGs \cite{GMS}. These works explore a variational optimal control problem that modifies the transition rate through a controlled \( Q \)-matrix. In \cite{OG}, mean field games on graphs and associated discrete master equations are developed. Recently, \cite{CF, CP, Erhan} employ probabilistic approaches to construct and analyze mean field games and master equations on finite states, studying the mean-field limit convergence from finite player games to mean field games. Concurrently, discrete optimal transport and generalized gradient flow structures on graphs have been a challenging and emerging field of study \cite{chow2012, maas2011gradient, M}, with recent developments cited in \cite{MPR14, Oliver}. These studies identify various gradient flow structures for reversible Markov chains, including original formulations based on Onsager's principle \cite{ON} and generalized gradient flows in metric spaces \cite{am2006}.
Our formulation of MFGs on a graph is rooted in the construction of a controlled continuity equation featuring a nonlinear activation function and a cost functional in a finite probability space. Notably, the nonlinear continuity equation utilizes the original \( Q \)-matrix and its invariant measure by defining a weight for each edge of the graph. Based on this weighted graph, the controlled continuity equation (see \eqref{optimalD_mfg}) inherently captures the original energy landscape of a large system.
In contrast, a natural approach of treating the \( Q \)-matrix as a control variable in a linear continuity equation may lose this structural information. While this original structure could be incorporated into the running cost as potential energy, doing so effectively is particularly challenging for large systems. Our approach uses the original \( Q \)-matrix and its invariant measure   as graph weights in a nonlinear continuity equation that incorporates an activation function. This enables us to study the variational formula for value functions that solve the master equations for both non-potential and mixed games. We note that nonlinear activation functions introduce a class of continuity equations that nonlinearly depend on densities. Such equations have been extensively studied in the context of gradient flows and  optimal transport on graph \cite{chow2012, maas2011gradient, M}. Our formulation thus extends existing studies on    discrete optimal transport to the modeling and computation of discrete-state mean field control problems and games.
It is worth mentioning that the work in \cite{GMS1} obtained the well-posedness of Hamilton-Jacobi equations on Wasserstein spaces on graphs. Their work can be seen as an analytic result to our formulations for mean-field games on graph.   We also refer the reader to \cite{Degond-Herty} for zero time horizon limits and to \cite{lions111, Bardi-Cardaliaguet} for a large discount limit that reduces the MFG to a best-response strategy in a kinetic system \cite{de2014}.

The paper is organized as follows. In Section \ref{sec2}, we review mean field games and master equations in continuous state domains. We also derive variational principles for master equations including mixed strategy-type master equations. Section \ref{sec3} revisits discrete Onsager's gradient flows and the generalized gradient flow structure on a graph, which is induced by a reversible Markov chain. We employ the continuity equation with a nonlinear activation function to formulate an optimal control problem for modeling potential MFGs on a finite state space. Functional Hamilton-Jacobi equations and master equations on finite state space are then derived. In Section \ref{sec4}, we extend this formulation to non-potential games, deriving master equations and mixed strategy master equations. Finally, Section \ref{sec5} presents illustrations of MFG dynamics on a two-point state space.

\section{Review of mean field game system and master equations on  continuous state space}\label{sec2}

 Before we study the MFG on graph, we give a comprehensive review for the MFG on continuous state space because we aim to establish the same variational structure and derivations for MFG on graph. As this section is mainly the review of known results, we illustrate the main derivations for the completeness of this paper and provide pedagogical  proofs   for  this section   in the Appendix \ref{app}. 

We first review some definitions and dynamics of potential mean field games on a continuous domain (assumed to be $\bT^d$, a $d$-dimensional torus) of configuration states $x\in \bT^d$ for players in Section \ref{sec2.1_ind_game} and Section \ref{sec2.2_MFG}. We also explain how a solution to a MFG system \eqref{MFG2} gives a Nash equilibrium in Section \ref{sec2.3NE}.   One can then reformulate the MFG system \eqref{MFG2} for population  state distribution $\rho_s$ and population policy function $\Phi_s$ as a Hamiltonian system \eqref{Hsys} with a functional Hamiltonian $\sH(\rho, \Phi)$ \eqref{functionalHH}. This Hamiltonian system is exactly the   bi-characteristics of a Hamilton-Jacobi equation \eqref{HJErho} for $\sU(\rho,t)$ in probability space $\sP(\bT^d)$ equipped with Wasserstein metric. It is known that the solution $ \sU(\rho,t)$ to this functional HJE \eqref{HJErho} is represented as a value function  \eqref{optimal_MFG} with a dynamic programming principle.  
To describe the optimal strategy (a.k.a.   control, decision-making, response, action, drift, velocity, transition rate, etc, depending on the setup of games) for both individual state $x_s$ and population  state distribution $\rho_s(x)$ at time $s$ of all the players in the mean field game, we revisit the master equation \eqref{gen_master} from the value function perspective for $u(x,\rho,t)$ in both potential game and general game cases in Section \ref{sec2.5} and Section \ref{sec2.5_master_mfg}. 

In non-potential games, we  derive the variational principle in  Proposition \ref{prop_master_C_g} for master equation \eqref{gen_master}, whose solution $u(x,\rho,t)$ is represented as the value function of  individual players  guided by the mean field trajectories for population  state distribution $\rho_s$ and population policy function $\Phi_s$, \, $t\leq s\leq T$ in MFG system \eqref{MFG2}; see also the mixed game case in Proposition \ref{prop_mix_value} for the master equation \eqref{gen_master2}.
Compared with the potential games, one no longer has a functional HJE for    general games, thus    the variational principle for   the functional HJE can not be used. Nevertheless, the solution to the MFG system \eqref{MFG2} still serves as   guiding trajectories in the variational principle for the master equation. This variational principle   gives a Lax-Oleinik type solution representation for master equation \eqref{gen_master}.

We also discuss the connection with the Nash equilibria and develop a  master equation \eqref{gen_master2} for value function $U(q,\rho,t)$ for mixed strategy mean field games.  Here we only consider deterministic  MFG  without noise/common noise and refer to \cite{cardaliaguet2019master} for a more general setting with various noises and the associated first/second order master equations.

\subsection{ Indistinguished individual player HJE in $\bT^d$}\label{sec2.1_ind_game}
Denote $\rho_t = \rho_t(\cdot)\in \sP(\bT^d)$ as the state distribution of population of players at fixed $t$. 
In general Nash equilibrium requires a player to anticipate other players'
strategy. However, MFG assumes that an indistinguished individual player implements
strategies only based on state distribution $\rho_t$ of population and does not anticipate
the strategies of other players \cite{MRP, lasry2007, MFG}. Here, indistinguished player means that the  payoff function (the terminal profit subtract the running cost) are same for each individual players. Under those assumptions, one can show the optimal strategy for individual players are indeed a feedback control, where the control variable only depends on the current state (graph function). Meanwhile, given a population state distribution $\rho_t$, the value function (maximal payoff function) satisfies an HJE derived from the dynamic programming principle; see Proposition \ref{prop_indi_C}. 
  In Section \ref{sec2.5_master_mfg}, we will give a variational principle for the general master equation for mean field games, which verifies that given MFG dynamics, the individual players must follow the optimal strategy in MFG to achieve the best payoff function.

Given a population state distribution $\rho_t$, the  individual value function under the simplest setup is descirbed below. First, consider the terminal profit $G:\bT^d \times \sP(\bT^d) \to \bR$ such that $G(x_T, \rho_T)$ depending on the terminal state $x_T$ and terminal population state density $\rho_T$. Second,  consider Lagrangian $L(x, v)$ on $\bT^d\times \bR^d$ and the running cost $L(x_s, v_s)$ at time $s$ for an individual player with the additional potential energy $F:\bT^d \times \sP(\bT^d) \to \bR$ such that $F(x_s, \rho_s)$ depending on the state $x_s$ and population state density $\rho_s$ at time $s\in[t,T]$. We also assume that the Lagrangian $L(x,v)$ is strictly convex and coercive (superlinear) w.r.t the second variable $v$.
 Thus the individual value function is defined as
 \begin{equation}\label{optimal}
 \begin{aligned}
 \Phi(x,t) = &\sup_{v_s, x_s} \bbs{G(x_T, \rho_T)-\int_t^T \bbs{L( x_s, v_s)  - F(x_s, \rho_s) }\ud s },\\
 &\st \,  \dot{x}_s =v_s, \quad t\leq s\leq T, \quad x_t = x.
 \end{aligned}
 \end{equation}
 Here the optimization is taken to maximize the terminal profit subtract running cost, which follows the convention in the optimal control theory.
 Given a population state density $\rho_s$, $t\le s\le T$, the total individual value function can be understood as that individual players want to maximize the terminal profit   subtracting the running cost. We take this formulation following the Lax-Oleinik semigroup representation for the value function. {\blue From now on, for the clarity of notations, we use the shorthand notations $x_s$, $v_s$, or $\rho_s$ to denote the curves with parameter $s\in[t, T]$. Particularly, for MFG system later,   we use the notation $(\rho_s(\cdot), \Phi_s(\cdot))$, $t\leq s\leq T$ as the solutions which be regarded   as curves in function spaces.}
 
One can derive the Euler-Lagrange equations using the Lagrangian multiplier $p_s\in \bR^d$
 \begin{equation}
 \sup_{(v_s,x_s)} \inf_{p_s} \bbs{G(x_T, \rho_T) - \int_t^T [L(x_s, v_s) - F(x_s, \rho_s)+ p_s\cdot(\dot{x}_s-v_s) ]\ud s }.
 \end{equation}
 Then the optimal individual trajectories follow
 \begin{equation}
 \dot{x}_s = v_s  = \pt_p \tilde{H}(x_s, p_s; \rho_s), \quad \dot{p}_s = - \pt_x \tilde{H}(x_s, p_s; \rho_s), \quad t\leq s\leq T.
 \end{equation}
Here $H(x,p):= \sup_v\bbs{v\cdot p - L(x,v)}$ is the convex conjugate of $L(x,v)$ and
 \begin{equation}
 \tilde{H}(x,p; \rho) := H(x,p) + F(x, \rho).
 \end{equation}
 The following proposition for HJE of the value function is reformulated from \cite{MFG, cardaliaguet2019master}.
 \begin{prop}[\cite{MFG, cardaliaguet2019master}]\label{prop_indi_C}
 Given a population state density $\rho_s$, $s\in[t,T]$.  Let $\Phi(x,t)$ be the classical solution to the dynamic HJE
 \begin{equation}\label{HJE}
 \pt_t \Phi(x,t) + H(x, \nabla \Phi(x,t)) + F(x, \rho_t)=0,  \quad t\leq T, \qquad \Phi(x,T) = G( x, \rho_T).
 \end{equation}
 Then $\Phi(x,t)$ is the individual value function  given by \eqref{optimal}. 
 Moreover, the optimal velocity for individual players along the optimal curve  achieving the best profit value in \eqref{optimal} is given by feedback control in the form
  \begin{equation}\label{ODE}
 \dot{x}_s = \pt_p H( x_s , \nabla \Phi(x_s, s)).
 \end{equation}
 \end{prop}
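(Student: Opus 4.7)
The plan is to run a standard verification argument: we show that any classical solution $\Phi(x,t)$ of the HJE \eqref{HJE} automatically dominates the value function \eqref{optimal}, and that the feedback velocity $v_s = \partial_p H(x_s, \nabla \Phi(x_s, s))$ achieves equality, from which both conclusions follow simultaneously.

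First I would fix an arbitrary admissible pair $(x_s, v_s)$ with $x_t = x$ and $\dot x_s = v_s$, and compute
\begin{equation*}
\frac{d}{ds}\Phi(x_s, s) = \partial_s \Phi(x_s, s) + v_s \cdot \nabla \Phi(x_s, s).
\end{equation*}
Using the HJE \eqref{HJE} to replace $\partial_s \Phi$ by $-H(x_s, \nabla \Phi) - F(x_s, \rho_s)$, and invoking the Legendre--Fenchel inequality $v_s \cdot p - H(x_s, p) \leq L(x_s, v_s)$ with $p = \nabla \Phi(x_s, s)$, I obtain the pointwise bound
\begin{equation*}
\frac{d}{ds}\Phi(x_s, s) \;\leq\; L(x_s, v_s) - F(x_s, \rho_s).
\end{equation*}
Integrating from $t$ to $T$ and using the terminal condition $\Phi(\cdot, T) = G(\cdot, \rho_T)$ yields
\begin{equation*}
\Phi(x,t) \;\geq\; G(x_T, \rho_T) - \int_t^T \bigl( L(x_s, v_s) - F(x_s, \rho_s) \bigr) \ud s,
\end{equation*}
and taking the supremum over admissible $(x_s, v_s)$ gives the inequality $\Phi(x,t) \geq$ the value function in \eqref{optimal}.

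For the reverse inequality and the feedback characterization, I use the fact that strict convexity and coercivity of $L$ in $v$ give a one-to-one Legendre correspondence: the inequality $v \cdot p - H(x,p) \leq L(x,v)$ becomes an equality precisely when $v = \partial_p H(x,p)$. So I solve the ODE $\dot x_s = \partial_p H(x_s, \nabla \Phi(x_s, s))$ with $x_t = x$ (well-posed because $\nabla \Phi$ is assumed smooth enough for the classical HJE solution), define $v_s^* := \partial_p H(x_s, \nabla \Phi(x_s, s))$, and repeat the computation above; every inequality is now an equality, giving
\begin{equation*}
\Phi(x,t) = G(x_T, \rho_T) - \int_t^T \bigl( L(x_s, v_s^*) - F(x_s, \rho_s) \bigr) \ud s.
\end{equation*}
Hence $\Phi(x,t)$ equals the value function \eqref{optimal} and the supremum is attained along the feedback trajectory \eqref{ODE}.

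There is no real obstacle — the only delicate point is the regularity assumed on $\Phi$: one needs $\Phi$ to be $C^1$ (or at least differentiable along the trajectory) to apply the chain rule, which is built into the hypothesis that $\Phi$ is a classical solution. If one wanted a viscosity-solution version, the above chain-rule step would have to be replaced by a doubling-of-variables or subdifferential argument, but that is outside the scope of this proposition.
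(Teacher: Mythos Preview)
Your proof is correct and follows essentially the same verification argument as the paper: both use the chain rule along an arbitrary admissible trajectory, invoke the Legendre--Fenchel inequality $v\cdot p - H(x,p)\le L(x,v)$ with $p=\nabla\Phi(x_s,s)$, integrate using the HJE and terminal condition to obtain $\Phi(x,t)\ge J(x,t)$, and then observe that the feedback $v_s=\partial_pH(x_s,\nabla\Phi(x_s,s))$ turns the inequality into equality. The only cosmetic difference is that the paper writes the Legendre inequality first on the running cost integral and then substitutes, whereas you start from $\frac{d}{ds}\Phi(x_s,s)$; the content is identical.
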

 We remark that the value function $\Phi(x,t)$ in \eqref{optimal} may not be differential at some point, nevertheless it is well known that $\Phi(x,t)$ is the Lax-Oleinik representation for the viscosity solution to HJE. Although the dynamic programming principle does not require continuous derivatives, for simplicity, we only present the proposition and its proof by assuming $\Phi(x,t)$ is classical solution with continuous derivatives w.r.t. $t$ and $x$. 


\subsection{Potential mean field game system}\label{sec2.2_MFG}

In this subsection, we revisit the potential mean field game and give a variational derivation of the associated MFG system \eqref{MFG2}.

 We adapt the notation of the first variation of a functional $\sU: \sP(\bT^d) \to \bR$ from  \cite[Definition 2.1]{cardaliaguet2019master}. We call  $\frac{\delta \sU}{\delta \rho}: \sP(\bT^d) \times \bT^d  \to \bR$ the first variation of the functional $\sU$ if
\begin{equation}\label{var}
\frac{\ud}{\ud \eps}\Big|_{\eps = 0} \sU(\rho + \eps \tilde{\rho}) = \int_{\bT^d} \frac{\delta \sU}{\delta \rho}(\rho, x) \tilde{\rho}(x) \ud x \quad \text{ for any  } \tilde{\rho} \text{ such that }\int \tilde{\rho}(x) \ud x=0.
\end{equation} 
Notice the definition of the first variation above is unique up to a constant. Without loss of generality, one can set this constant such that  $\int \frac{\delta \sU}{\delta \rho}(\rho, x) \ud x=0.$

Assume there exist functionals $\sF, \sG: \sP(\bT^d) \to \bR$ such that 
\begin{equation}\label{potential}
\frac{\delta \sF}{\delta \rho} (\rho, x) = F(x, \rho), \quad  \frac{\delta \sG}{\delta \rho} (\rho, x) = G(x, \rho).
\end{equation}
This assumption is referred as a potential game.
 We still consider the terminal profit $\sG(\rho_T)$ depending on the terminal population state density $\rho_T$. We  consider the running cost for population given by the integration of  $L(x, v_s(x))$ multiplied by the density function $\rho_s(x)$ at each time $s$, subtracted  by the   potential energy functional $\sF(\rho_s)$.  Notice the integration is represented as a summation of individual cost $L(x,\dot{x})$ with $\dot{x}$ replaced by mean field velocity $v_s(x)$.  Then the  mean field value functional for a population is defined as
 \begin{equation}\label{optimal_MFG}
 \begin{aligned}
 \sU(\rho,t) := &\sup_{v_s,\rho_s} \left\{\sG(\rho_T) - \int_t^T  \bbs{  \int_{\bT^d} L(x,v_s(x)) \rho_s(x) \ud x - \sF(\rho_s) }   \ud s \right\} ,\\
 &\st \pt_s \rho_s + \nabla\cdot(\rho_s v_s) = 0, \quad t \leq s \leq T, \quad \rho_t = \rho.
 \end{aligned}
 \end{equation}
 Notice this is a finite time horizon optimal control formulation with the control variable $v_s(\cdot)$. We also refer to \cite{fleming06, gao2022transition} for an infinite time horizon optimal control formulation for the underlying stochastic process for the distribution $\rho_s$, which is particularly useful for the transition path theory and the long time behaviors of mean field games \cite{gomes}.
 
Similarly, one can derive the Euler-Lagrange equations using Lagrangian multiplier $\Phi_s(x)$
 \begin{equation}
 \sup_{v_s,\rho_s} \inf_{\Phi_s}  \bbs{ \sG(\rho_T) -\int_t^T \bbs{ \int_{\bT^d} [L(x,v_s(x)) \rho_s(x) +   \Phi_s(x)( \pt_s \rho_s(x) + \nabla\cdot(\rho_s(x) v_s(x)))]   \ud x - \sF(\rho_s)      } \ud s }.  
 \end{equation}
 Then the Euler-Lagrange equations are
 \begin{equation}\label{mean_EL}
 \begin{aligned}
  \pt_s \rho_s(x) + \nabla\cdot(\rho_s(x) v_s(x)) = 0,\\
  L(x, v_s(x)) - \pt_s \Phi_s(x) -v_s(x) \cdot \nabla \Phi_s(x) -  F(x, \rho_s)  =0,\\
  \pt_v L( x,v_s(x)) -   \nabla \Phi_s(x)=0,\\
  \Phi_T(x)  = G(x,\rho_T).
 \end{aligned}
 \end{equation}
Notice the definition of the convex conjugate,
\begin{equation}
H( x, \nabla \Phi_s(x)) = \sup_v \bbs{v \cdot \nabla \Phi_s(x) - L(x,v)} = v_s(x) \cdot \nabla \Phi_s(x) - L(x,v_s(x)), \quad\, v_s(x) \text{ solves } \pt_v L(x,v_s) =   \nabla \Phi_s(x).
\end{equation}
Hence using $v_s(x) = \pt_p H(x, \nabla \Phi_s(x))$, \eqref{mean_EL} becomes MFG system
\begin{equation}\label{MFG2}
\begin{aligned}
\pt_s \rho_s(x) + \nabla\cdot(\rho_s(x) \pt_p H(x,\nabla \Phi_s(x))) = 0, \quad t\leq s\leq T,\\
\pt_s \Phi_s(x)  +H(x,\nabla \Phi_s(x))+  F(x, \rho_s)  =0, \quad t\leq s\leq T,\\
\rho_t(x)=\rho(x), \quad \Phi_T(x)  = G(x,\rho_T). 
\end{aligned}
\end{equation}
This MFG system for population  state distribution $\rho_s(x)$ and population policy function $\Phi_s(x)$, $t\le s\le T$,
was first proposed in \cite{lasry2007}; see also \cite{cardaliaguet2019master}. 
The existence and uniqueness of weak solution to \eqref{MFG2} can be found in \cite{CP_book} under the assumption that $H(x,p)$ is strictly convex in $p$ and $F(x,\rho),G(x,\rho)$ are monotone in $\rho$.

\subsection{MFG system gives the Nash equilibrium for general games }\label{sec2.3NE}
Although the variational deviation above for MFG system \eqref{MFG2} is only for the potential games, nevertheless, the solution to MFG system \eqref{MFG2} gives the Nash equilibrium for general games.

Comparing the HJE for individual game \eqref{HJE} and the MFG system \eqref{MFG2}, the microscopic optimal strategy and value function $\Phi(x,t; \rho_{s\in[t,T]})$ are consistent with solutions $\Phi_s(x)$ to the mean field game system \eqref{MFG2}, i.e., \eqref{phiphi}. This gives a Nash equilibrium for general games.
Meanwhile, the optimal velocity in the continuity equation in \eqref{mean_EL} agrees with the individual optimal velocity, which, after  evaluated at $x_s$, is given by feedback control
$v_s =\pt_p H(x_s, \nabla \Phi(x_s, s) ).$
In summary, we state the following consistent proposition.
\begin{prop}\label{prop2.2}
Suppose the MFG system has a solution $(\rho_s, \Phi_s), \, t\leq s\leq T$. Let $\Phi(x,t; \rho_{s\in[t,T]})$ be the value function in the individual game in \eqref{optimal} with the given population $\rho_{s\in[t,T]}$.
 Then we have 
 \begin{equation}\label{phiphi}
    \Phi(x,t; \rho_{s\in[t,T]}) = \Phi_s(x), \quad t\leq s\leq T \quad \text{ for any } t\leq T, x\in \bT^d. 
 \end{equation}
 Moreover, the optimal velocity for the individual player following ODE \eqref{ODE} is same as the mean field velocity in the continuity equation $\pt_s \rho_s(x) + \nabla\cdot(\rho_s(x) \pt_p H(x,\nabla \Phi_s(x))) = 0.$
 This   optimal strategy for individual players achieves a Nash equilibrium.
 \end{prop}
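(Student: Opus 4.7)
The plan is to reduce this proposition directly to Proposition \ref{prop_indi_C} by recognizing that the second equation in the MFG system \eqref{MFG2} is exactly the individual-player HJE \eqref{HJE} driven by the population trajectory $\rho_s$. Concretely, the MFG system states
\[
\pt_s \Phi_s(x) + H(x, \nabla \Phi_s(x)) + F(x, \rho_s) = 0, \quad \Phi_T(x) = G(x, \rho_T),
\]
which is precisely the dynamic HJE \eqref{HJE} for the individual value function with frozen population density $\rho_{s\in[t,T]}$. Since we are assuming classical solutions, Proposition \ref{prop_indi_C} then identifies $\Phi_s(x)$ with the individual value function $\Phi(x,s;\rho_{\tau\in[s,T]})$. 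To obtain the identity at every intermediate time, I would invoke the dynamic programming principle: restricting the individual optimization over $[s,T]$ with initial state $x_s = x$ produces a value function that still solves the same HJE backward from $T$ with the same terminal datum, so uniqueness of the classical solution forces agreement with $\Phi_s(x)$. This gives \eqref{phiphi}.

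Next, for the optimal feedback, Proposition \ref{prop_indi_C} delivers the feedback law $\dot x_s = \pt_p H(x_s, \nabla \Phi(x_s,s;\rho_{\cdot})) = \pt_p H(x_s, \nabla \Phi_s(x_s))$. On the other hand, the first equation in the MFG system \eqref{MFG2} transports $\rho_s$ under the velocity field $\pt_p H(x, \nabla \Phi_s(x))$. Thus the microscopic feedback drift along any optimal individual trajectory coincides pointwise with the mean field velocity, which is exactly the second assertion of the proposition.

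For the Nash equilibrium interpretation, I would argue as follows. Fix the population trajectory to be the MFG solution $\rho_s$. By the previous two steps, any indistinguishable individual, when maximizing the payoff \eqref{optimal} against this fixed population, achieves the optimum by following the feedback $v_s = \pt_p H(x_s, \nabla \Phi_s(x_s))$. If every individual adopts this same strategy, their common law remains $\rho_s$ by the continuity equation in \eqref{MFG2} with initial datum $\rho$. Thus no single player has a profitable unilateral deviation, which is the Nash equilibrium property in the mean field sense.

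The only delicate point I foresee is the uniqueness step underlying the identification $\Phi(x,s;\rho_{\tau\in[s,T]}) = \Phi_s(x)$: a priori the value function is only a viscosity solution, and one needs classical regularity (or at least a comparison principle between classical and viscosity solutions) to equate it with the given $\Phi_s$ from the MFG system. Because the proposition is stated under the standing assumption that $\Phi_s$ is a classical solution and $H$ is strictly convex and coercive, this reduces to a standard comparison argument, so the conceptual content really is just the observation that the second equation of \eqref{MFG2} is the individual HJE in disguise; the remaining work is bookkeeping through Proposition \ref{prop_indi_C}.
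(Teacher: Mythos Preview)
Your proposal is correct and follows exactly the paper's approach: the paper's argument (given in the paragraph immediately preceding the proposition rather than in the appendix) is simply to observe that the second equation of the MFG system \eqref{MFG2} is the individual HJE \eqref{HJE} with the given population $\rho_s$, so Proposition \ref{prop_indi_C} identifies $\Phi_s$ with the individual value function and delivers the matching feedback velocity. Your write-up is in fact more detailed than the paper's own treatment, and your caveat about the classical-vs-viscosity uniqueness step is a fair remark that the paper simply sidesteps by assuming classical solutions throughout.
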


We emphasize that although individual player implements
strategies without anticipating
the strategies of other players, they can still achieve a Nash equilibrium. This is because the mean field game system has a solution and the individual optimal strategy/velocity is given by a feedback control. 
 In Section \ref{sec2.5_master_mfg}, by using a variational principle for the master eqaution, we show that individual choose the best strategy which consistent with the MFG optimal strategy associated with specific initial population state density function.
 We also refer to \cite{MFG} for the rigorous justification of the mean field limit of $N$-player Nash system.

  \begin{rem}
 Define $\sF(\rho)$ as a functional of $\rho$ such that $\frac{\delta \sF(\rho)}{\delta \rho}(x)= \tilde{F}(\rho (x))$ for some function $\tilde{F}(y)$.  Define $\sG(\rho)$ as a functional of $\rho$ such that $ \frac{\delta \sG(\rho)}{\delta \rho}(x)= \tilde{G}(\rho(x)) $ for some function $\tilde{G}(y)$.
 Notice in the potential game above, the terminal profit functional and cost functional are \textit{not} the total cost from individuals $\int \tilde{F}(\rho (x)) \rho(x) \ud x$. For instance, let $f$ be the antiderivative of $\tilde{F}$ such that $\tilde{F}(y)=f'(y)$, and $f(y)= \int_0^y \tilde{F}(s) \ud s + c$, then the functional $\sF(\rho) = \int_{\bT^d} f(\rho(x)) \ud x$. In the case that $f(0)=0$ and $f''(y)\geq 0$, then
 \begin{equation}
 \mathcal{F}(\rho) \leq \int_{\bT^d} \tilde{F}(\rho(x)) \rho(x) \ud x,
 \end{equation}
 which is known as the price of anarchy.
 \end{rem}

 \subsection{Functional Hamilton-Jacobi equations in $\sP(\bT^d)$ for potential games}\label{sec2.3_MFG_HJE}
In this section, we review the dynamic HJE \eqref{HJErho} on the probability space $\sP(\bT^d)$ for the value function $\sU(\rho,t)$ in the mean field game \eqref{optimal_MFG} and recast the MFG system \eqref{MFG2} as a Hamiltonian dynamics \eqref{Hsys} in terms of a functional Hamiltonian $\sH$. 
Precisely, 
define a functional $\sH:   \sP(\bT^d) \times C^1(\bT^d)\to \bR$
\begin{equation}\label{functionalHH}
\sH(\rho(\cdot), \Phi(\cdot)):=  \int_{\bT^d}  H( x, \nabla_x \Phi (x)) \rho(x) \ud x + \sF(\rho).
\end{equation}
Then we have the following proposition showing that the value function in  \eqref{optimal_MFG} solves the   dynamic HJE on the probability space $\sP(\bT^d)$. This proposition is reformulated from \cite{MFG, cardaliaguet2019master}.  
We refer to Appendix \ref{app}   for   the proofs.

\begin{prop}[\cite{MFG, cardaliaguet2019master}]\label{prop1}
 Assume $\sU(\rho,t)$  is a unique classical solution to the functional  
  HJE in $\sP(\bT^d)$
  \begin{equation}\label{HJErho}
  \pt_t \sU(\rho , t) + \sH(  \rho, \frac{\delta \sU}{\delta \rho } ) = 0, \,\,\, t \leq T, \quad \sU(\rho  , T) = \sG(\rho  ).
  \end{equation}
  Then $\sU(\rho,t)$ is the value function defined in the mean field game \eqref{optimal_MFG}. 
In detail, \eqref{HJErho} is recast as
  \begin{equation*}
\left\{ \begin{aligned}
  &\pt_t \sU(\rho , t) +  \int_{\bT^d}  H( x, \nabla_x \frac{\delta \sU}{\delta\rho}(\rho, x , t)) \rho(x) \ud x + \sF(\rho)= 0, \,\,\, t \leq T, \\
  & \sU(\rho , T) = \sG(\rho  ).
  \end{aligned}\right.
  \end{equation*}
\end{prop}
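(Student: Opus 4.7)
The plan is a standard verification argument for dynamic programming: differentiate the candidate value $\sU(\rho_s,s)$ along an arbitrary admissible trajectory, use the HJE to eliminate the time derivative, then apply the Fenchel--Young inequality to compare with the running cost.

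First, I would fix any admissible pair $(\rho_s, v_s)$ with $t \leq s \leq T$, $\rho_t=\rho$, satisfying the continuity equation $\pt_s \rho_s+\nabla\cdot(\rho_s v_s)=0$. Using the definition of the first variation \eqref{var} and integration by parts on $\bT^d$, I would compute
\begin{equation*}
\frac{\ud}{\ud s}\sU(\rho_s,s) = \pt_s \sU(\rho_s,s) + \int_{\bT^d} \nabla_x \frac{\delta \sU}{\delta\rho}(\rho_s,x,s)\cdot v_s(x)\,\rho_s(x)\ud x.
\end{equation*}
Substituting the HJE \eqref{HJErho} together with the definition \eqref{functionalHH} of $\sH$ to replace $\pt_s\sU$, one obtains
\begin{equation*}
\frac{\ud}{\ud s}\sU(\rho_s,s) = -\sF(\rho_s) + \int_{\bT^d}\Bigl[v_s(x)\cdot \nabla_x\tfrac{\delta\sU}{\delta\rho} - H\bigl(x,\nabla_x\tfrac{\delta\sU}{\delta\rho}\bigr)\Bigr]\rho_s(x)\ud x.
\end{equation*}

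The Fenchel--Young inequality $v\cdot p - H(x,p)\leq L(x,v)$, with equality iff $v=\pt_p H(x,p)$, then yields $\frac{\ud}{\ud s}\sU(\rho_s,s) \leq \int L(x,v_s)\rho_s \ud x - \sF(\rho_s)$. Integrating from $t$ to $T$ and using the terminal condition $\sU(\rho_T,T)=\sG(\rho_T)$, I would rearrange to
\begin{equation*}
\sU(\rho,t) \geq \sG(\rho_T) - \int_t^T\Bigl(\int_{\bT^d} L(x,v_s)\rho_s\ud x - \sF(\rho_s)\Bigr)\ud s,
\end{equation*}
and taking the supremum over admissible $(v_s,\rho_s)$ gives $\sU(\rho,t) \geq$ the value in \eqref{optimal_MFG}.

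For the reverse inequality, I would plug in the feedback velocity $v^*_s(x) := \pt_p H\bigl(x,\nabla_x\tfrac{\delta\sU}{\delta\rho}(\rho^*_s,x,s)\bigr)$, where $\rho^*_s$ is the forward solution of the continuity equation starting from $\rho$ with drift $v^*_s$. Along $(\rho^*_s,v^*_s)$ the Fenchel--Young inequality becomes an equality, so the chain-rule computation above gives $\frac{\ud}{\ud s}\sU(\rho^*_s,s) = \int L(x,v^*_s)\rho^*_s\ud x - \sF(\rho^*_s)$, and integrating recovers $\sU(\rho,t)$ exactly, proving that the supremum is attained and matches. The main obstacle is the forward step: existence and sufficient regularity of the optimal flow $\rho^*_s$ driven by the feedback $v^*_s$ so that the chain rule on $\sP(\bT^d)$ is justified; under the standing hypothesis that $\sU$ is a classical solution (so $\nabla_x\tfrac{\delta\sU}{\delta\rho}$ is regular in both arguments) and that $H$ is smooth and strictly convex in $p$, this is a standard transport-equation argument, but a general viscosity-solution version would require upgrading the chain rule, which is why the statement is restricted to classical solutions.
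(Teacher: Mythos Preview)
Your proposal is correct and follows essentially the same verification argument as the paper: differentiate $\sU(\rho_s,s)$ along admissible trajectories, use the HJE to eliminate $\pt_s\sU$, apply Fenchel--Young to get the upper bound, and achieve equality via the feedback $v_s=\pt_p H(x,\nabla_x\tfrac{\delta\sU}{\delta\rho})$. The paper's presentation merely reverses the order (Fenchel--Young first, then the chain rule), but the ingredients and structure are identical.
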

Notice the solution to functional HJE may not be unique and shall be understood in the viscosity solution sense. In finite dimensions, the viscosity solution to HJE is given by a Lax-Oleinik semigroup solution, which is exactly the value function in the optimal control formulation. Naturally,  the  notion of the viscosity solution to HJE \eqref{HJErho} can also be defined as the value function  in \eqref{optimal_MFG}; see  \cite{Gangbo2015}. This definition immediately provides the existence and uniqueness of the viscosity solution to HJE \eqref{HJErho}. We refer to \cite{Gangbo2015, Gangbo2} for careful study for the regularity of the viscosity solution to \eqref{HJErho}, which is then used to obtain the unique classical  solution to the master equation for potential games under some assumptions.

Notice by elementary computations, 
\begin{equation}\label{H12}
\begin{aligned}
\frac{\delta \sH}{\delta \Phi}(\rho, \Phi,x) =& -\nabla \cdot\bbs{\pt_p H(x, \nabla \Phi(x)) \rho(x)} \\
 \frac{\delta \sH}{\delta \rho}(\rho, x, \Phi) =& H (x, \nabla\Phi(x) ) + F(x , \rho ),
\end{aligned}
\end{equation}
where the   argument $x$ in the variation w.r.t. $\rho$ or $\Phi$ is according to the definition \eqref{var}.
We remark the continuity equation and HJE in \eqref{MFG2} can be recast as  a Hamiltonian dynamics in terms of the functional Hamiltonian $\sH$ in \eqref{functionalHH}
\begin{equation}\label{Hsys}
\begin{aligned}
&\pt_s \rho_s(x) =  \frac{\delta \sH}{\delta \Phi}(\rho_s,\Phi_s, x),\qquad 
\pt_s \Phi_s(x)  = -\frac{\delta \sH}{\delta \rho}(\rho_s, x, \Phi_s ), \quad t\leq s\leq T;
\\
&\rho_t(x)=\rho(x), \qquad \Phi_T(x)  = G(x,\rho_T).
\end{aligned}
\end{equation}
In the shorthand notations, the Hamiltonian system \eqref{Hsys} can be regarded as curves in function space
\begin{equation}\label{bi-charac}
\pt_s \rho_s =  \frac{\delta \sH}{\delta \Phi}(\rho_s,\Phi_s),\quad 
\pt_s \Phi_s = -\frac{\delta \sH}{\delta \rho}(\rho_s, \Phi_s), \quad t\leq s\leq T.
\end{equation}

In summary, we have the following corollary.
\begin{cor}\label{corHJEs}
Assume $\sU(\rho,t)$ is a unique classical solution to \eqref{HJErho}.  Then the continuity equation in MFG system \eqref{MFG2}  can be  solved   from 
\begin{equation}\label{rho_s}
\pt_s \rho_s(x) =   \frac{\delta \sH}{\delta \Phi}\bbs{ \rho_s, \frac{\delta \sU}{\delta \rho}(\rho_s,\cdot), x}, \quad s\leq t\leq T,  \quad \rho_t=\rho.
\end{equation}
In other words, 
  $\rho_s$ solved in \eqref{rho_s} is a Lagrangian graph in $\sP(\bT^d)$ and then 
$\Phi_s$ in  MFG system \eqref{MFG2} can be solved by
\begin{equation}
\pt_s \Phi_s(x) =-\frac{\delta \sH}{\delta \rho}(\rho_s,  x, \Phi_s), \quad \Phi_T(x)=-\frac{\delta \sG}{\delta \rho}(\rho_T).
\end{equation}
   Meanwhile, the optimal velocity in the continuity equation in \eqref{optimal_MFG}  is given by  
\begin{equation}
v_s(x) = \pt_p H(x, \nabla \Phi_s(x)) = \pt_p H\bbs{x, \nabla\frac{\delta \sU}{\delta \rho}(\rho_s,x)}.
\end{equation}
\end{cor}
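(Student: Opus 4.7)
The plan is to adopt the standard Hamilton--Jacobi approach of recovering the bi-characteristics \eqref{bi-charac} from a classical solution of the functional HJE \eqref{HJErho}, adapted to the Wasserstein setting via the first-variation calculus. First I would define $\rho_s$ as the solution of the closed nonlinear equation \eqref{rho_s}; using the explicit form \eqref{H12} this is the McKean--Vlasov-type continuity equation $\pt_s\rho_s+\nabla\cdot\bbs{\rho_s\,\pt_p H(x,\nabla\frac{\delta \sU}{\delta \rho}(\rho_s,x,s))}=0$ with initial datum $\rho_t=\rho$, whose well-posedness I take for granted under the standing classical-regularity hypothesis on $\sU$. I would then set $\Phi_s(x):=\frac{\delta \sU}{\delta\rho}(\rho_s,x,s)$ and verify the corollary by checking that $(\rho_s,\Phi_s)$ solves the MFG system \eqref{MFG2}.

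The terminal condition $\Phi_T(x)=G(x,\rho_T)$ follows immediately from $\sU(\rho,T)=\sG(\rho)$ combined with the potential-game assumption \eqref{potential}. For the backward HJE satisfied by $\Phi_s$, I would compute $\pt_s\Phi_s$ through the functional chain rule in $s$,
\begin{equation*}
\pt_s\Phi_s(x)=\pt_t\!\left(\frac{\delta \sU}{\delta\rho}\right)\!(\rho_s,x,s)+\int_{\bT^d}\frac{\delta^2\sU}{\delta\rho^2}(\rho_s,x,y,s)\,\pt_s\rho_s(y)\,\ud y,
\end{equation*}
and then feed in the first variation of \eqref{HJErho} with respect to $\rho$. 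Applying the chain rule to $\sH\!\left(\rho,\frac{\delta\sU}{\delta\rho}\right)$ yields the identity
\begin{equation*}
\pt_t\!\left(\frac{\delta\sU}{\delta\rho}\right)\!(\rho,x,t)+\frac{\delta\sH}{\delta\rho}\!\left(\rho,x,\frac{\delta\sU}{\delta\rho}\right)+\int_{\bT^d}\frac{\delta\sH}{\delta\Phi}\!\left(\rho,\frac{\delta\sU}{\delta\rho},y\right)\frac{\delta^2\sU}{\delta\rho^2}(\rho,x,y,t)\,\ud y=0.
\end{equation*}

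The central step is then the cancellation: evaluating the above identity at $\rho=\rho_s$, using $\pt_s\rho_s(\cdot)=\frac{\delta\sH}{\delta\Phi}(\rho_s,\Phi_s,\cdot)$ from \eqref{rho_s}, and invoking the symmetry of second variations $\frac{\delta^2\sU}{\delta\rho^2}(\rho,x,y,t)=\frac{\delta^2\sU}{\delta\rho^2}(\rho,y,x,t)$, the two integral terms cancel exactly and I am left with the conjugate momentum equation $\pt_s\Phi_s(x)=-\frac{\delta\sH}{\delta\rho}(\rho_s,x,\Phi_s)$. Combined with \eqref{rho_s}, this is precisely the Hamiltonian system \eqref{Hsys}, which by \eqref{H12} coincides with the MFG system \eqref{MFG2}. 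The velocity formula $v_s(x)=\pt_p H(x,\nabla\Phi_s(x))=\pt_p H(x,\nabla\frac{\delta\sU}{\delta\rho}(\rho_s,x))$ follows directly from the first identity in \eqref{H12}, which expresses $\frac{\delta\sH}{\delta\Phi}$ as minus the divergence of the optimal flux.

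I expect the main obstacle to be justifying the above manipulations rigorously, rather than the algebra itself. The functional chain rule and the Schwarz-type symmetry of $\frac{\delta^2\sU}{\delta\rho^2}$ require enough smoothness of $\sU(\cdot,t)$ on $\sP(\bT^d)$ to legitimate differentiation under the integral; similarly, the flow \eqref{rho_s} has coefficients depending nonlinearly on $\rho$ through $\nabla\frac{\delta\sU}{\delta\rho}$, so its classical well-posedness demands joint regularity of $\sU$ in both arguments. Under the standing assumption that $\sU$ is a classical solution, these points are absorbed into the hypothesis; for a more delicate treatment one would invoke the viscosity/regularity results for \eqref{HJErho} established in \cite{cardaliaguet2019master, Gangbo2015, Gangbo2} under appropriate convexity and monotonicity conditions.
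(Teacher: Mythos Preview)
Your argument is correct, but it proceeds along a genuinely different route from the paper. The paper's proof is a one-line invocation of the variational structure: having shown in the second step of the proof of Proposition~\ref{prop1} that the curve $\rho_s$ solving \eqref{rho_s} together with $\Phi_s(x)=\frac{\delta\sU}{\delta\rho}(\rho_s,x,s)$ realizes the supremum in \eqref{optimal_MFG}, and having derived \eqref{MFG2} as the Euler--Lagrange system of that variational problem, it follows immediately that this optimizer satisfies \eqref{MFG2}. No differentiation of the HJE, no second variations, and no Schwarz symmetry are needed.

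Your approach is the direct bi-characteristics computation: differentiate the functional HJE \eqref{HJErho} with respect to $\rho$, compute $\pt_s\Phi_s$ by the chain rule, and use the symmetry of $\frac{\delta^2\sU}{\delta\rho^2}$ to cancel the integral terms. This is the standard PDE route and has the virtue of being independent of the variational interpretation---it would apply equally well to a Hamiltonian $\sH(\rho,\Phi)$ not arising from an optimal control problem. The price is that you need $C^2$-type regularity of $\sU$ in the measure argument to make the second-variation manipulations rigorous, whereas the paper's optimality argument only needs first-order information plus the Euler--Lagrange derivation already in hand from Section~\ref{sec2.2_MFG}.
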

 
 \begin{cor}\label{cor_expU}
Suppose $H(x,p)$ satisfies homogeneous degree $\beta$ condition for the second variable, i.e., $H(x,\lambda p) = \lambda^\beta H(x,p)$ for any $\lambda\geq 0$. Then given the trajectory $(\rho_s, \Phi_s), \, t\leq s\leq T$ of the MFG system \eqref{MFG2}, the value function $\sU(\rho,t)$ can be represented as
\begin{equation}
    \sU(\rho, t) = \sG(\rho_T) -   (\beta-1) \sH_0 + \beta \int_t^T \sF(\rho_s) \ud s,
\end{equation}
where $\sH_0$ is a constant. 
\end{cor}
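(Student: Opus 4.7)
The plan is to exploit the homogeneity of $H$ to express the Lagrangian running cost directly in terms of the Hamiltonian along optimal trajectories, and then invoke conservation of the functional Hamiltonian along the bi-characteristic system \eqref{Hsys} to collapse the time integral of $\sH$ into a constant.

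First, I would apply Euler's identity. The assumption $H(x,\lambda p)=\lambda^\beta H(x,p)$ gives, upon differentiating in $\lambda$ at $\lambda=1$,
\begin{equation*}
p \cdot \pt_p H(x,p) = \beta H(x,p).
\end{equation*}
By Corollary \ref{corHJEs}, along the MFG trajectory the optimal velocity is $v_s(x)=\pt_p H(x,\nabla \Phi_s(x))$, and the Legendre duality identity in \eqref{mean_EL} gives $L(x,v_s(x)) = v_s(x)\cdot \nabla \Phi_s(x) - H(x,\nabla \Phi_s(x))$. Combining these two formulas yields
\begin{equation*}
L(x,v_s(x)) = \beta H(x,\nabla \Phi_s(x)) - H(x,\nabla \Phi_s(x)) = (\beta-1)\, H(x,\nabla \Phi_s(x)).
\end{equation*}

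Next, I would integrate against $\rho_s$ and recognize the functional Hamiltonian \eqref{functionalHH}. Namely,
\begin{equation*}
\int_{\bT^d} L(x,v_s(x))\rho_s(x)\ud x = (\beta-1)\!\int_{\bT^d} H(x,\nabla \Phi_s(x))\rho_s(x)\ud x = (\beta-1)\bigl[\sH(\rho_s,\Phi_s) - \sF(\rho_s)\bigr].
\end{equation*}
Substituting into the running-cost integrand of \eqref{optimal_MFG} gives
\begin{equation*}
\int_{\bT^d} L(x,v_s)\rho_s\ud x - \sF(\rho_s) = (\beta-1)\sH(\rho_s,\Phi_s) - \beta\, \sF(\rho_s),
\end{equation*}
so that
\begin{equation*}
\sU(\rho,t) = \sG(\rho_T) - (\beta-1)\int_t^T \sH(\rho_s,\Phi_s)\,\ud s + \beta \int_t^T \sF(\rho_s)\,\ud s.
\end{equation*}

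The final step, which is the main point, is to verify that $\sH(\rho_s,\Phi_s)$ is conserved along the Hamiltonian flow \eqref{Hsys}. Since $\sH$ has no explicit $s$-dependence, a direct chain-rule computation using the definition \eqref{var} of first variations and the system \eqref{Hsys} gives
\begin{equation*}
\frac{\ud}{\ud s}\sH(\rho_s,\Phi_s) = \int_{\bT^d}\frac{\delta \sH}{\delta \rho}\,\pt_s\rho_s\,\ud x + \int_{\bT^d}\frac{\delta \sH}{\delta \Phi}\,\pt_s\Phi_s\,\ud x = \int_{\bT^d}\frac{\delta \sH}{\delta \rho}\frac{\delta \sH}{\delta \Phi}\ud x - \int_{\bT^d}\frac{\delta \sH}{\delta \Phi}\frac{\delta \sH}{\delta \rho}\ud x = 0.
\end{equation*}
Thus $\sH(\rho_s,\Phi_s) \equiv \sH_0$ is a constant along the trajectory, and the time integral $\int_t^T \sH(\rho_s,\Phi_s)\ud s$ reduces to this constant, yielding the stated formula. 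The only subtlety I anticipate is justifying the variational chain rule in an infinite-dimensional setting, which requires the mean-zero compatibility $\int \pt_s \rho_s\,\ud x = 0$ that follows from the continuity equation and the fact that $\pt_s\Phi_s$ can be tested against an arbitrary perturbation; these are implicit in the classical-solution regularity assumed in Proposition \ref{prop1}.
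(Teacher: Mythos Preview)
Your proposal is correct and follows essentially the same route as the paper: both apply Euler's identity $p\cdot\partial_pH=\beta H$ to the Legendre relation to get $L=(\beta-1)H$ along the optimal trajectory, rewrite the running cost as $(\beta-1)\sH-\beta\sF$, and then invoke conservation of $\sH$ along the bi-characteristics. Your explicit chain-rule verification of $\frac{d}{ds}\sH(\rho_s,\Phi_s)=0$ is a small addition the paper leaves implicit, and note that both your argument and the paper's absorb the factor $(T-t)$ into the constant $\sH_0$ (which is consistent with the applications in Section~\ref{sec5} where $T-t=1$).
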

This Corallary will be used in Section \ref{sec5} for closed formula solutions in applications.

\subsection{Master equations for potential games in $\sP(\bT^d)$}\label{sec2.5}
In this section, we first give a derived master equation \eqref{Master} for $u(x,\rho,t)$ in the case of potential games. Inspired by the master equation for this special case, in the next section, we will derive a   master equation \eqref{gen_master} for general mean field game and represent the solution via a variational principle.

We start from the potential game assumption, i.e., 
there exist  functionals $\sF, \sG: \sP(\bT^d) \to \bR$ such that \eqref{potential} holds.
Then we have the following lemma. This lemma is reformulated from \cite[Theorem 3.12]{cardaliaguet2019master}. 
\begin{lem}[\cite{cardaliaguet2019master}]\label{lemma27}
    Let $\sU(\rho,t)$, $t\leq  T$ be a solution to HJE \eqref{HJErho} and assume \eqref{potential} holds.  Denote $u(x,\rho,t):=\frac{\delta}{\delta\rho }\sU(\rho,x,t)+ \beta(\rho, t)$ for some $\beta(\rho,t)$. Then
$u(x,\rho,t)$ satisfies the following functional PDE system \begin{equation}\label{Master}
  \pt_t u(x,\rho , t) + \frac{\delta}{\delta\rho}\sH( \rho,  x,  u(\cdot, \rho, t))  = 0, \,\,\, t\leq T, \quad u(x,\rho  , T) = \frac{\delta \sG}{\delta\rho }(\rho, x  ),
  \end{equation}
where the last argument $x$ in the variation is in the sense of   definition \eqref{var}. In detail, 
  \begin{equation*}
\left\{ \begin{aligned}
  &\pt_t u(x,\rho, t) + H( x, \nabla_x u(x,\rho,t))+ \frac{\delta \sF}{\delta\rho } (\rho,x) \\
  &\hspace{1.8cm} +\int D_pH(y, \nabla_y u(y,\rho,t))\nabla_y[\frac{\delta u}{\delta\rho }(x,\rho,y,t)] \rho(y)\ud y= 0, \,\,\, t \leq T, \\
  & u(x,\rho  , T) = \frac{\delta \sG}{\delta\rho } (\rho ,x),
  \end{aligned}\right.
  \end{equation*}
  where the notation $\nabla_y[\frac{\delta u}{\delta\rho }(y,\rho,x,t)]$ means that we first take the first variation $\frac{\delta u}{\delta\rho }(y,\rho,x,t)$ of $u(y,\rho, t)$ based on \eqref{var} and then take the derivative w.r.t the first variable $y$.
\end{lem}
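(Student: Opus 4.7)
The plan is to derive \eqref{Master} by taking the first variation of the functional Hamilton-Jacobi equation \eqref{HJErho} with respect to $\rho$. Since $\sU$ is a classical solution, I perturb $\rho$ by $\eps \eta$ with $\int \eta(x)\ud x = 0$, differentiate the identity $\pt_t \sU(\rho+\eps\eta,t) + \sH(\rho+\eps\eta, \frac{\delta \sU}{\delta \rho}(\rho+\eps\eta,\cdot,t)) = 0$ at $\eps = 0$, and read off the coefficient of $\eta$. Because the result must vanish for every mean-zero $\eta$, the bracketed expression equals a function of $(\rho,t)$ only, independent of $x$; it is precisely this residual constant in $x$ that the scalar correction $\beta(\rho,t)$ is designed to absorb.

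For the $\pt_t$ term I simply pick up $\pt_t \frac{\delta \sU}{\delta \rho}(\rho,x,t)$. For the Hamiltonian term, the chain rule on $\sH(\rho,\Phi)$ evaluated at $\Phi = \frac{\delta \sU}{\delta \rho}(\rho,\cdot,t)$ gives a direct contribution $\frac{\delta \sH}{\delta \rho}(\rho,x,\Phi)$ and an indirect contribution $\int \frac{\delta \sH}{\delta \Phi}(\rho,\Phi,y)\, \frac{\delta^2 \sU}{\delta \rho^2}(\rho,y,x,t)\,\ud y$. By \eqref{H12}, the direct piece equals $H(x,\nabla\Phi(x)) + F(x,\rho) = H(x,\nabla u(x,\rho,t)) + \frac{\delta \sF}{\delta \rho}(\rho,x)$, since $\beta$ is independent of $x$ and hence $\nabla_x\Phi = \nabla_x u$. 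The indirect piece, after substituting $\frac{\delta \sH}{\delta \Phi} = -\nabla_y \cdot (\pt_p H(y,\nabla \Phi(y))\rho(y))$ and integrating by parts in $y$, becomes $\int \pt_p H(y,\nabla u(y,\rho,t))\cdot \nabla_y \frac{\delta^2 \sU}{\delta \rho^2}(\rho,y,x,t)\, \rho(y)\,\ud y$.

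The final step is to rewrite $\frac{\delta^2 \sU}{\delta \rho^2}$ in terms of $\frac{\delta u}{\delta \rho}$. By the symmetry of mixed second variations, $\frac{\delta^2 \sU}{\delta \rho^2}(\rho,y,x,t) = \frac{\delta^2 \sU}{\delta \rho^2}(\rho,x,y,t)$, and by the definition of $u$ this equals $\frac{\delta u}{\delta \rho}(x,\rho,y,t) - \frac{\delta \beta}{\delta \rho}(\rho,y,t)$. The $\frac{\delta \beta}{\delta \rho}$ piece is independent of $x$, so its contribution to the integral is an $x$-independent quantity that can be absorbed, together with $\pt_t \beta$, into the constant permitted by the $\rho$-derivative of \eqref{HJErho}. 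Choosing $\beta(\rho,t)$ so that this total constant vanishes, with $\beta(\rho,T)=0$ so that the terminal data $u(x,\rho,T)=\frac{\delta\sG}{\delta\rho}(\rho,x)$ is preserved, yields \eqref{Master} as stated.

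The main obstacle is the bookkeeping of $x$-independent residuals coming from two sources --- the inherent ambiguity of $\frac{\delta \sU}{\delta \rho}$ modulo constants in $x$, and the spurious $\nabla_y \frac{\delta \beta}{\delta \rho}$ term arising when $\frac{\delta^2 \sU}{\delta \rho^2}$ is replaced by $\frac{\delta u}{\delta \rho}$ inside the integral --- and checking that a single scalar function $\beta(\rho,t)$ suffices to absorb both simultaneously via a terminal ODE in $t$ at each fixed $\rho$. Beyond this, the derivation is a direct application of the chain rule and integration by parts.
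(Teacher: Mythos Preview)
Your proposal is correct and follows essentially the same approach as the paper's proof: differentiate the functional HJE \eqref{HJErho} in $\rho$, identify the master equation up to an $x$-independent residual, and absorb that residual into $\beta(\rho,t)$. Your bookkeeping is in fact slightly more careful than the paper's---you correctly note the extra $\frac{\delta\beta}{\delta\rho}$ term that appears when rewriting $\frac{\delta^2\sU}{\delta\rho^2}$ as $\frac{\delta u}{\delta\rho}$, whereas the paper invokes the symmetry $\frac{\delta u}{\delta\rho}(y,\rho,x,t)=\frac{\delta u}{\delta\rho}(x,\rho,y,t)$, which strictly speaking holds for $\frac{\delta^2\sU}{\delta\rho^2}$ but not for $\frac{\delta u}{\delta\rho}$ once a $\rho$-dependent $\beta$ is added. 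One small caveat: the equation determining $\beta$ is not literally a ``terminal ODE in $t$ at each fixed $\rho$'' since it involves $\frac{\delta\beta}{\delta\rho}$; it is a linear transport equation on $\sP(\bT^d)$, solvable by characteristics along the MFG flow with terminal data $\beta(\cdot,T)=0$.
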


\subsection{Variational principle of master equations for general mean field games}\label{sec2.5_master_mfg}
Without the assumption of potential games, the goal of this section is to derive the   master equation for $u(x,\rho,t), t\leq T$ describing the optimal strategy for both individuals and their population
   \begin{equation}\label{gen_master}
\left\{ \begin{aligned}
  &\pt_t u(x,\rho, t) + H( x, \nabla_x u(x,\rho,t)) + F(x, \rho) \\
  &\hspace{1.8cm} +\int D_pH(y, \nabla_y u(y,\rho,t))\nabla_y[\frac{\delta u}{\delta\rho }(x,\rho,y,t)] \rho(y)\ud y= 0, \,\,\, t \leq T, \\
  & u(x,\rho  , T) = G(x, \rho).
  \end{aligned}\right.
  \end{equation}

This master equation for general MFG was first derived as the large number limit of the Nash system describing the Nash equilibrium for $N$- player games  \cite[Section 1.2]{cardaliaguet2019master}. The solution to master equation   fully characterizes the dynamics of individuals and population behaviors and  can be used to construct an approximated solution to the Nash system for  $N$- player games. The well-posedness for the master equation \eqref{gen_master} is only established very recently for general Hamiltonian by \textsc{Gangbo and Meszaros} \cite{Gangbo2}. This kind of Hamiltonian in master equation $H( x, p) + F(x, \rho)$ is known as separable Hamiltonian.  We also refer to \cite{Gangbo1} for the global well-posedness on the master equation for MFG with non-degenerate individual noise and non-separable Hamiltonian.
  
In this subsection, we will   give a variational principle \eqref{variational_m1} in   Proposition \ref{prop_master_C_g} for the solution to the general master equation under the assumption of the uniqueness of the solution to the master equation. This variational principle not only gives a Lax-Oleinik type solution representation but also shows that to achieve the optimal strategy, the individual players must take the optimal velocity determined by the initial population state density $\rho$ and the associated MFG system \eqref{MFG2}. We refer to Appendix \ref{app}   for all the proofs of this section.

In potential games,   the variational principle for master equation always holds since the master equation is a derived equation from the functional HJE, which has a Lax-Oleinik representation. Instead, for  the general games, one no longer has a functional HJE. Nevertheless, the solution to the MFG system \eqref{MFG2} still serves as a guiding trajectories in the variational principle for the master equation. 

Before deriving the master equation \eqref{gen_master}, we first give the following lemma showing that the solution to the master equation along trajectory $\rho_s$, $t\le s\le T$, in \eqref{MFG2} coincides with $\Phi_s$ in the MFG system   \eqref{MFG2}.

 \begin{lem}\label{lem_phi_u}
Let $(\rho_s(\cdot), \Phi_s(\cdot)), \, s\in[t,T]$ be a classical solution solving the mean field game system \eqref{MFG2}. {\blue Assume there exists a classical solution $u(x, \rho, t)$ to master equation \eqref{gen_master}.
We also assume the classical solution  to the following    nonlocal HJE in terms of $\Psi(x,s)$ is unique
\begin{equation}\label{uniqueMM1}
\begin{aligned}
\pt_s \Psi(x,s) + H(x, \nabla_x\Psi(x,s)) + \int    D_pH(y, \nabla_y \Psi(y,s)) \cdot  \nabla_y[\frac{\delta u}{\delta \rho} (x, \rho_s, y, s)] \rho_s(y)  dy 
=R(x,s),\\
\Psi(x,T)= G(x, \rho_T),
\end{aligned}
\end{equation} 
where $R(x,s)$ is a given forcing term.}
 Then 
\begin{equation}\label{Phi=u}
\Phi_s(x)=u(x, \rho_s, s), \quad t\leq s \leq T.
\end{equation} 
Particularly, $\Phi_t(x)=u(x,\rho,t).$
   \end{lem}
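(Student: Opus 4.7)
The plan is to define $\Psi(x,s) := u(x,\rho_s,s)$ along the MFG trajectory and prove, via the uniqueness hypothesis for \eqref{uniqueMM1}, that $\Psi$ coincides with $\Phi_s$. First I would compute $\partial_s \Psi$ by the chain rule in $\sP(\bT^d)$:
\begin{equation*}
\pt_s \Psi(x,s) = \pt_t u(x,\rho_s,s) + \int \frac{\delta u}{\delta \rho}(x,\rho_s,y,s)\,\pt_s \rho_s(y)\,\ud y.
\end{equation*}
For the second term I would substitute the continuity equation $\pt_s\rho_s + \nabla_y\cdot(\rho_s D_pH(y,\nabla\Phi_s))=0$ from the MFG system and integrate by parts on $\bT^d$ (no boundary terms) to obtain
\begin{equation*}
\int \frac{\delta u}{\delta \rho}(x,\rho_s,y,s)\,\pt_s\rho_s(y)\,\ud y = \int D_pH(y,\nabla\Phi_s(y))\cdot \nabla_y\!\left[\tfrac{\delta u}{\delta\rho}(x,\rho_s,y,s)\right]\rho_s(y)\,\ud y.
\end{equation*}

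For the first term I would read off $\pt_t u(x,\rho_s,s)$ directly from the master equation \eqref{gen_master} evaluated at $(x,\rho_s,s)$. Combining the two substitutions yields
\begin{equation*}
\pt_s\Psi + H(x,\nabla_x\Psi) + \int D_pH(y,\nabla_y\Psi(y,s))\,\nabla_y\!\left[\tfrac{\delta u}{\delta\rho}(x,\rho_s,y,s)\right]\rho_s(y)\,\ud y = R(x,s),
\end{equation*}
where the forcing is
\begin{equation*}
R(x,s) := -F(x,\rho_s) + \int D_pH(y,\nabla\Phi_s(y))\,\nabla_y\!\left[\tfrac{\delta u}{\delta\rho}(x,\rho_s,y,s)\right]\rho_s(y)\,\ud y.
\end{equation*}
So $\Psi$ solves \eqref{uniqueMM1} with this specific $R$, and the terminal condition $\Psi(x,T)=u(x,\rho_T,T)=G(x,\rho_T)$ is immediate from \eqref{gen_master}.

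Next I would verify that $\Phi_s$ from the MFG system satisfies the \emph{same} equation with the \emph{same} forcing $R$. This is essentially tautological: the HJE in \eqref{MFG2} says $\pt_s\Phi_s + H(x,\nabla\Phi_s) + F(x,\rho_s)=0$, so adding $\int D_pH(y,\nabla\Phi_s(y))\nabla_y[\tfrac{\delta u}{\delta\rho}(x,\rho_s,y,s)]\rho_s(y)\,\ud y$ to both sides produces exactly \eqref{uniqueMM1} with the same $R$, and the terminal condition $\Phi_T(x)=G(x,\rho_T)$ matches. Invoking the uniqueness hypothesis for \eqref{uniqueMM1} then forces $\Psi(x,s)=\Phi_s(x)$ on $[t,T]$, which is \eqref{Phi=u}. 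Specializing to $s=t$ gives $\Phi_t(x)=u(x,\rho,t)$.

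The only delicate step is the chain rule across the probability measure argument together with the integration by parts, which requires enough regularity of $\tfrac{\delta u}{\delta\rho}(x,\rho,y,s)$ in $y$ and of $\rho_s$ to justify the manipulations; this is covered by the standing assumption that $u$ and $(\rho_s,\Phi_s)$ are classical solutions. Everything else is a bookkeeping identification of $\Psi$ and $\Phi_s$ as solutions of the same nonlocal HJE with the same terminal data, so the weight of the proof rests on the uniqueness hypothesis for \eqref{uniqueMM1}.
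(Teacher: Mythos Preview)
Your proof is correct and follows essentially the same approach as the paper: define $\Psi(x,s)=u(x,\rho_s,s)$, use the chain rule together with the continuity equation (integration by parts) and the master equation to see that $\Psi$ solves \eqref{uniqueMM1} with a specific forcing $R$, check that $\Phi_s$ from the MFG system solves the same equation with the same $R$ and the same terminal data, and then invoke the uniqueness hypothesis. The paper organizes the computation as the difference $\partial_s\tilde\Phi-\partial_s\Phi$ before rearranging into \eqref{uniqueMM1}, but this is only a cosmetic difference from your presentation.
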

   
{\blue We point out, this lemma implies that the solution $(\rho_s(\cdot), \Phi_s(\cdot)), \, s\in[t,T]$ MFG system \eqref{MFG2} can be regarded as a bi-characteristics for the master equation. That is, given any $\rho, t$, one can solve $(\rho_s(\cdot), \Phi_s(\cdot)), \, s\in[t,T]$ and then   $u(x,\rho,t)=\Phi_t(x).$ 
Based on this characteristic method for master equation, it is natural to consider the case that there might be no classical solution to MFG system \eqref{MFG2}, however, beyond classical solution, one can consider a Lax-Oleinik type solution represented as a value function of the following variation principle.
}

Now we are ready to propose the variational principle for the master equation:
\begin{equation} \label{variational_m1}
 \begin{aligned}
  u(x,\rho, t) =  &\sup_{v_s,\,  x_s } \bbs{G(x_T, \rho_T)-\int_t^T \bbs{L( x_s, v_s)  - F(x_s, \rho_s) }\ud s },\\
 &\st \dot{x}_s =v_s, \quad t\leq s\leq T, \quad x_t = x,\\
 & \qquad  (\rho_s(\cdot), \Phi_s(\cdot)), \,\, s\in[t,T]\, \text{ solves MFG system \eqref{MFG2} with initial data }\rho_t=\rho.
 \end{aligned}
 \end{equation}
We will prove  this variational principle in Proposition \ref{prop_master_C_g} and use it to show that   given any initial population state density $\rho$ and MFG trajectory $(\rho_s, \Phi_s)$, $t\le s\le T$, the individual player at state $x_s$ will take the best velocity $v_s(x_s)$, as a feedback control   determined by the initial population state density $\rho_t=\rho$. This also means the individual player has the best strategy only if one  follows the  strategy in MFG system, which is known as the Nash's equilibrium. More precisely, we have
\begin{prop}\label{prop_master_C_g}
Suppose $u(x,\rho,t)$ is a classical solution to the general Master equation \eqref{gen_master}. Under the same assumption as  Lemma \ref{lem_phi_u}, we know
   $u(x, \rho,t)$ can be expressed as the value function defined in \eqref{variational_m1}.
\end{prop}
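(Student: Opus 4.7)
The plan is to identify $u(x,\rho,t)$ with the individual value function $\Phi(x,t;\rho_{s\in[t,T]})$ from Proposition \ref{prop_indi_C}, where the population trajectory $\rho_s$ is the one furnished by the MFG system with initial datum $\rho_t=\rho$. Once this identification is made, the variational representation \eqref{variational_m1} is simply a restatement of the Lax--Oleinik formula \eqref{optimal} evaluated along the MFG-driven population.

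Concretely, I would proceed in three short steps. First, invoke Lemma \ref{lem_phi_u} to obtain $\Phi_t(x)=u(x,\rho,t)$, where $(\rho_s,\Phi_s)_{s\in[t,T]}$ solves the MFG system \eqref{MFG2} with $\rho_t=\rho$; this requires checking that the nonlocal HJE \eqref{uniqueMM1} has a unique classical solution, which is precisely the hypothesis imported from Lemma \ref{lem_phi_u}. Second, read off from the second equation of \eqref{MFG2} that, with $\rho_s$ regarded as a prescribed time-dependent datum, the function $\Phi_s(x)$ solves the individual HJE
\begin{equation*}
\partial_s \Phi_s(x) + H(x,\nabla\Phi_s(x)) + F(x,\rho_s)=0,\quad s\le T,\qquad \Phi_T(x)=G(x,\rho_T),
\end{equation*}
which is exactly \eqref{HJE} with the given population $\rho_s$. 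Third, apply Proposition \ref{prop_indi_C} to identify $\Phi_t(x)$ with the value function of the individual optimal control problem \eqref{optimal} where the population enters only through $F(x_s,\rho_s)$ and $G(x_T,\rho_T)$; this value function is exactly the right-hand side of \eqref{variational_m1}.

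Chaining these identifications, $u(x,\rho,t)=\Phi_t(x)=\Phi(x,t;\rho_{s\in[t,T]})$ equals the supremum in \eqref{variational_m1}, and the optimal individual velocity along any maximizing curve $x_s$ is the feedback $\dot x_s=\partial_p H(x_s,\nabla\Phi_s(x_s))=\partial_p H(x_s,\nabla_x u(x_s,\rho_s,s))$, which is consistent with the Nash-equilibrium interpretation discussed after \eqref{phiphi}.

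The main obstacle is conceptual rather than computational: one must be careful that the $\rho_s$ appearing inside the optimization in \eqref{variational_m1} is \emph{frozen} to the MFG trajectory driven by the initial datum $\rho$, not re-optimized in response to the individual's deviation $x_s$. Under this freezing, the individual problem decouples from any feedback into the population, so Proposition \ref{prop_indi_C} applies verbatim. The uniqueness hypothesis in Lemma \ref{lem_phi_u} is what rules out the alternative scenario in which $\Phi_s(x)$ and $u(x,\rho_s,s)$ could solve the same nonlocal HJE with different data; without it, the identification $\Phi_s=u(\cdot,\rho_s,s)$ (and hence the whole argument) would fail. No additional convexity or monotonicity hypothesis on $F,G$ is required beyond what is already used to obtain the classical solvability of the MFG system and the master equation.
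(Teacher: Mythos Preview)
Your argument is correct, and it takes a genuinely different route from the paper's own proof. The paper carries out a direct Lax--Oleinik style verification: it fixes an arbitrary admissible curve $x_s$, uses the Legendre inequality with the feedback momentum $p_s=\nabla_x u(x_s,\rho_s,s)$, and then appeals to the master equation \eqref{gen_master} itself (together with Lemma~\ref{lem_phi_u}, to identify $\nabla\Phi_s$ with $\nabla_x u(\cdot,\rho_s,s)$ inside the transport term) to show that the resulting integral collapses to $u(x_T,\rho_T,T)-u(x,\rho,t)$; equality is then checked along $\dot x_s=\partial_p H(x_s,\nabla_x u(x_s,\rho_s,s))$. In other words, the paper reproves Proposition~\ref{prop_indi_C} in disguise, but with the master equation playing the role of the individual HJE.

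Your approach is shorter and more modular: you invoke Lemma~\ref{lem_phi_u} once to get $u(x,\rho,t)=\Phi_t(x)$, observe that $\Phi_s$ already satisfies the individual HJE \eqref{HJE} for the frozen population $\rho_s$, and then quote Proposition~\ref{prop_indi_C} to obtain the variational formula. This avoids redoing the dynamic-programming inequality. The paper's direct computation, on the other hand, makes explicit how the transport term in the master equation cancels against $\int\frac{\delta u}{\delta\rho}\,\partial_s\rho_s$, which is conceptually useful if one wants to see \eqref{variational_m1} as intrinsic to the master equation rather than mediated through \eqref{MFG2}. Both arguments require the same standing hypotheses (classical solvability of \eqref{MFG2} and the uniqueness assumption in Lemma~\ref{lem_phi_u}).
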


 We refer to Appendix \ref{app}   for  the proofs of Proposition \ref{prop_master_C_g}.
  We remark that equation \eqref{gen_master} and \eqref{Master} are the same equation in the case of potential games.   The variational formula \eqref{variational_m1} can be used to define a weak solution in the Lax-Oleinik form for the master equation \eqref{gen_master}; see \cite[Definition 7.3]{Gangbo2015}.

  \subsection{Derivation of master equations for mixed games}
  In this section, we denote the state distribution of individual player $q$ and the state distribution of population $\rho$ in a mixed game. We aim to derive the following master equation  which describes  the optimal strategy  for both individual state distribution and the population's one
 \begin{equation}\label{gen_master2}
\left\{ \begin{aligned}
  &  \pt_t U (q, \rho, t) + \int [H(x, \nabla_x\frac{\delta U}{\delta q} (q,x,\rho, t))  + F(x, \rho) ]q(x)\ud x \\
   &\qquad  + \int \nabla_y   \frac{\delta U}{\delta \rho}(q, \rho, y, t) \cdot \pt_p H(y, \nabla_y\frac{\delta U}{\delta q}  (q,y,\rho, t)) \rho(y) \ud y
   = 0, \quad  t \leq T, \\
  & U(q,\rho , T) = \int G(x, \rho)q(x) \ud x. 
  \end{aligned}\right.
  \end{equation}
  This is indeed a derived equation from the master equation \eqref{gen_master} via $U(q, \rho,t)  = \int u(x, \rho,t) q(x) \ud x$; see Lemma \ref{lem_phi_u_q}.
  We will show a variational representation for the solution to \eqref{gen_master2} in Proposition \ref{prop_mix_value}.
   Before that, we provide the following lemma.

\begin{lem}\label{lem_phi_u_q}
Let $u(x, \rho,t)$ be the unique classical solution to master equation \eqref{gen_master}. Then
\begin{equation}
 U(q, \rho,t) := \int u(x, \rho,t) q(x) \ud x
\end{equation}
satisfies the mixed master equation \eqref{gen_master2}. Meanwhile
$
\frac{\delta U}{\delta q}(q, x, \rho,t) = u(x, \rho, t),
$
and along the trajectory $(\rho_s, \Phi_s), \, t\leq s\leq T$ of MFG system \eqref{MFG2},
\begin{equation}\label{U_Phi_C}
\frac{\delta U}{\delta q} (q, x, \rho_s,s) =u(x, \rho_s, s)= \Phi_s(x), \quad t\leq s \leq T.
\end{equation}
\end{lem}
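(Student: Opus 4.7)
The plan is to exploit that $U(q,\rho,t) = \int u(x,\rho,t) q(x) \ud x$ is linear in $q$, so all variational derivatives are either trivial or reduce to integration against $q$; the mixed master equation \eqref{gen_master2} then falls out of \eqref{gen_master} after a single application of Fubini. First, from \eqref{var} and the linearity in $q$ one immediately reads off $\frac{\delta U}{\delta q}(q,x,\rho,t) = u(x,\rho,t)$ (the additive-constant ambiguity is immaterial since only $\nabla_x \frac{\delta U}{\delta q}$ enters \eqref{gen_master2}); differentiating under the integral in $\rho$ yields
\begin{align*}
\frac{\delta U}{\delta \rho}(q,\rho,y,t) &= \int \frac{\delta u}{\delta \rho}(x,\rho,y,t)\, q(x) \ud x, \\
\nabla_y \frac{\delta U}{\delta \rho}(q,\rho,y,t) &= \int \nabla_y\Bigl[\frac{\delta u}{\delta \rho}(x,\rho,y,t)\Bigr] q(x) \ud x.
\end{align*}

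Next, I would multiply \eqref{gen_master} by $q(x)$ and integrate in $x$. The pointwise terms produce $\int \bigl[H(x,\nabla_x \tfrac{\delta U}{\delta q}) + F(x,\rho)\bigr] q(x) \ud x$ after using $\nabla_x u = \nabla_x \frac{\delta U}{\delta q}$, while Fubini converts the nonlocal term into $\int D_p H(y,\nabla_y \tfrac{\delta U}{\delta q}) \cdot \nabla_y \frac{\delta U}{\delta \rho}(q,\rho,y,t)\, \rho(y) \ud y$ by virtue of the identity displayed above. This is exactly \eqref{gen_master2}, and the terminal condition $U(q,\rho,T) = \int G(x,\rho) q(x) \ud x$ is immediate from $u(\cdot,\cdot,T) = G$.

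Finally, for the identity along MFG trajectories I would invoke Lemma \ref{lem_phi_u}, which asserts $\Phi_s(x) = u(x,\rho_s,s)$ for $t \leq s \leq T$; combining with the Step~1 identity $\frac{\delta U}{\delta q} = u$ immediately gives \eqref{U_Phi_C}. There is no genuine analytical obstacle here: Lemma \ref{lem_phi_u} and the well-posedness of \eqref{gen_master} have already done the heavy lifting, and what remains is the standard justification for differentiating and swapping integrals under the ambient regularity of $u$, $\nabla_x u$, and $\nabla_y \frac{\delta u}{\delta \rho}$, together with the harmless gauge choice noted at the start.
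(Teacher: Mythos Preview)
Your proposal is correct and follows essentially the same approach as the paper: both arguments identify $\frac{\delta U}{\delta q}=u$ from the linearity of $U$ in $q$, multiply \eqref{gen_master} by $q(x)$ and integrate, apply Fubini to convert the nonlocal term into the $\frac{\delta U}{\delta \rho}$ form, read off the terminal condition, and then invoke Lemma~\ref{lem_phi_u} for \eqref{U_Phi_C}. Your version is slightly more explicit in writing out $\frac{\delta U}{\delta \rho}$ as an integral against $q$ and in flagging the additive-constant gauge, but there is no substantive difference.
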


Now we generalize the variational principle for the master equation \eqref{gen_master} to the mixed game case. We claim the solution to \eqref{gen_master2} can be represented by the following maximal profit value function
 \begin{equation} \label{variational_m2}
 \begin{aligned}
  U(q,\rho, t) =  &\sup_{v_s(\cdot), q_s(\cdot), \rho_s(\cdot) } \bbs{\int q_T(x)G(x, \rho_T) \ud x -\int_t^T \bbs{L( x, v_s(x))      - F(x, \rho_s) }q_s(x) \ud x\ud s },\\
 &\st \pt_s q_s + \nabla \cdot(q_s v_s) = 0, \quad t\leq s\leq T, \quad q_t = q,\\
 & \qquad  (\rho_s( \cdot), \Phi_s( \cdot)) \, \text{ solves MFG system \eqref{MFG2} with initial data }\rho_t=\rho.
 \end{aligned}
 \end{equation}
This variational principle means that given any initial population state density$\rho$ and MFG trajectory $(\rho_s, \Phi_s)$, then the individual player at state $x_s$ will take the best velocity $v_s(x_s)$   determined by the population initial state density $\rho$. Precisely, we have

\begin{prop}\label{prop_mix_value}
Assume $U(q,\rho,t)$ is the unique classical solution solving the   master equation \eqref{gen_master2} for mixed game.
  Then $U(q,\rho,t)$ can be represented as the value function defined in \eqref{variational_m2}. Moreover, the optimal velocity field for the state distribution $q_s$ of individual player in the continuity equation $\pt_s q_s + \nabla \cdot(q_s v_s) = 0$ is given by
  \begin{align}
v_s(x) = \pt_p H\bbs{x_s, \nabla_x \frac{\delta U}{\delta q}(q_s, x, \rho_s, s)}, \quad t\leq s\leq T.
\end{align}
This is same as the optimal velocity for the population state distribution $p_s$ in the continuity equation
$\pt_s \rho_s(x) + \nabla\cdot(\rho_s(x) \pt_p H(x,\nabla \Phi_s(x))) = 0.$
\end{prop}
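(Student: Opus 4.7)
The plan is to reduce Proposition \ref{prop_mix_value} to Proposition \ref{prop_master_C_g} via Lemma \ref{lem_phi_u_q}, and then recognize the resulting integrated-variational-principle as an Eulerian reformulation of a collection of Lagrangian individual optimizations. Concretely, the uniqueness hypothesis together with Lemma \ref{lem_phi_u_q} gives
\begin{equation}
U(q,\rho,t)=\int u(x,\rho,t)\, q(x)\ud x,
\end{equation}
where $u(x,\rho,t)$ solves the master equation \eqref{gen_master}; Proposition \ref{prop_master_C_g} then provides the Lax--Oleinik representation \eqref{variational_m1} for $u(x,\rho,t)$. Substituting \eqref{variational_m1} into the identity above yields
\begin{equation}
U(q,\rho,t)=\int q(x) \sup_{v_s,x_s}\!\Bigl\{G(x_T,\rho_T)-\!\int_t^T\!\!\bigl[L(x_s,v_s)-F(x_s,\rho_s)\bigr]\ud s\Bigr\}\ud x,
\end{equation}
with $\rho_s$ determined by the MFG system with initial datum $\rho$, independently of $x$.

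Next I would show this integral-of-suprema coincides with \eqref{variational_m2}. In one direction, given any admissible $(q_s,v_s(\cdot))$ with $\partial_s q_s+\nabla\cdot(q_s v_s)=0$ and $q_t=q$, I would use the characteristic flow $\dot{x}_s=v_s(x_s)$ with initial condition $x$ sampled from $q$: because $v_s$ is a velocity \emph{field} (feedback in $x_s$), pushing $q$ forward along the flow recovers $q_s$, so the running cost $\int L\,q_s\ud x\ud s$ splits as $\int q(x)[\int_t^T L(x_s,v_s(x_s))\ud s]\ud x$, and similarly for $F$ and the terminal term. Taking the pointwise-in-$x$ supremum inside the integral gives the upper bound $U(q,\rho,t)\ge$ RHS of \eqref{variational_m2}. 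In the other direction, using Proposition \ref{prop_master_C_g}, the optimal individual velocity achieving the sup in \eqref{variational_m1} is the feedback $v_s^\star(y)=\partial_pH(y,\nabla_y u(y,\rho_s,s))$ evaluated along the characteristic. Letting $q_s^\star$ be the push-forward of $q$ under this common feedback field and applying the continuity equation gives an admissible competitor in \eqref{variational_m2} whose value equals $U(q,\rho,t)$. This two-sided inequality delivers the claimed variational representation.

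For the optimal velocity formula, $\frac{\delta U}{\delta q}(q,x,\rho,t)=u(x,\rho,t)$ from Lemma \ref{lem_phi_u_q}, so $\nabla_x\frac{\delta U}{\delta q}(q_s,x,\rho_s,s)=\nabla_x u(x,\rho_s,s)$, and plugging this into the feedback $v_s^\star$ derived above yields $v_s(x)=\partial_p H(x,\nabla_x\frac{\delta U}{\delta q}(q_s,x,\rho_s,s))$. To match it to the population velocity, I would invoke the identity \eqref{U_Phi_C} along the MFG trajectory: $\nabla_x\frac{\delta U}{\delta q}(q_s,x,\rho_s,s)=\nabla_x\Phi_s(x)$ at $q_s=\rho_s$ (or, more importantly, as functions of $x$ since the dependence on the first argument disappears after taking $\delta/\delta q$), giving equality with $\partial_pH(x,\nabla\Phi_s(x))$ which drives the MFG continuity equation in \eqref{MFG2}.

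The main obstacle is the Eulerian--Lagrangian interchange step: justifying that the pointwise-in-$x$ optimal individual trajectories can be simultaneously realized by a single feedback field and recorded via a density curve $q_s$ satisfying the continuity equation. This hinges on the fact that, along the fixed MFG-population curve $\rho_s$, the optimal velocity in \eqref{variational_m1} obtained via Proposition \ref{prop_master_C_g} is given by a feedback $\partial_pH(y,\nabla_y u(y,\rho_s,s))$ depending only on the current state $y$ (not the initial $x$), so the ensemble of optimal trajectories is a single characteristic flow for which the standard push-forward interpretation of the continuity equation applies. Once this is in place, the rest is a routine Fubini-type computation combined with the change-of-variables along the flow.
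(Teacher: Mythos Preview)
Your approach is correct but genuinely different from the paper's. The paper proves Proposition~\ref{prop_mix_value} by repeating, at the level of the distribution $q$, the dynamic-programming argument already used for Proposition~\ref{prop_master_C_g}: it applies the Legendre inequality $L(x,v)\geq v\cdot p - H(x,p)$ pointwise with the feedback choice $p_s(x)=\nabla_x\frac{\delta U}{\delta q}(q_s,x,\rho_s,s)$, integrates against $q_s$, uses the chain rule $\frac{\ud}{\ud s}U(q_s,\rho_s,s)=\partial_s U+\int\frac{\delta U}{\delta q}\partial_s q_s+\int\frac{\delta U}{\delta\rho}\partial_s\rho_s$, and then invokes the mixed master equation \eqref{gen_master2} together with \eqref{U_Phi_C} to kill the residual terms. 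Equality is obtained by checking that the feedback velocity saturates the Legendre inequality. By contrast, you reduce everything to the already-proved Proposition~\ref{prop_master_C_g} via the linear representation $U=\int u\,q$ from Lemma~\ref{lem_phi_u_q}, and then argue that the integral of the individual Lax--Oleinik suprema equals the Eulerian supremum in \eqref{variational_m2} through the flow-map/push-forward correspondence. Your route makes the conceptual point---that the mixed game is literally an ensemble of independent individual games against the same mean-field background---more transparent, and avoids redoing the verification computation. The paper's route is more self-contained and sidesteps the need to construct a characteristic flow for $v_s^\star$ (which requires enough regularity of $\nabla_x u(\cdot,\rho_s,s)$ for the ODE $\dot{x}_s=\partial_pH(x_s,\nabla_x u(x_s,\rho_s,s))$ to generate a well-defined flow map). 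You correctly flag this as the main obstacle and give the right reason it works; at the formal level of this paper both arguments are acceptable.
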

We refer to Appendix \ref{app}   for  the proofs of Proposition \ref{prop_mix_value}.

\section{Continuity equation with nonlinear activation function $\theta(x,y)$ on finite states: gradient flows and potential mean field games}\label{sec3}
In this section, we study mean field game dynamics on a finite state space. We start from a continuous-time Markov chain on finite states. To formulate mean field control problems on finite states, we first recast and then relax the continuity equation associated with the continuous-time Markov chain to a continuity equation \eqref{antisymmetric} with  a nonlinear activation function $\theta(x,y)$. This activation function comes from gradient flow reformulations for the original continuity equations; see strong Onsager's gradient flow \eqref{o-gradient} and generalized gradient flow \eqref{g-gradient}.
We then recall discrete Benamou-Brenier formula \eqref{BBD}, which is the optimal control version of the Wasserstein metric on discrete space. This motivates the 
 formulation of a class of potential mean field games \eqref{optimalD_mfg} and mean field game dynamics \eqref{MFG_discrete} on a finite state space. We also derive the master equations \eqref{discrete_master_1} on a reversible Markov process in the case of potential games.

\subsection{Reversible Markov chains and Onsager's gradient flow}\label{sec3.1}
Consider a time-continuous reversible Markov chain on a finite state $\mathcal{X}:=\big\{1,2,\cdots, n\big\}$.
Let $Q$ be a $Q$-matrix (i.e., generator of the Markov chain) satisfying row sum zero 
\begin{equation*}
\sum_{j=1}^n Q_{ij} = 0, \qquad Q_{ij}\geq 0, \quad \textrm{for $j\neq i$}.
\end{equation*}
Denote the finite probability space as 
\begin{equation*}
\sP(\mathcal{X}):=\Big\{(p_i)_{i=1}^n\in \mathbb{R}^n \colon \sum_{i=1}^np_i=1, \quad p_i\geq 0\Big\}.
\end{equation*}
Then the Kolmogorov forward equation for the law $p_i(t), \,i=1, \cdots, n$ satisfies
\begin{equation}\label{master}
\frac{\ud p_i}{\ud t} = \sum_{j=1}^n Q_{ji} p_j =\sum_{j=1}^n \bbs{Q_{ji} p_j - Q_{ij} p_i}.
\end{equation}
 Assume the Markov chain is reversible and hence there is a unique positive invariant measure $\pi=(\pi_i)_{i=1}^n\in \mathbb{R}^n$, $\pi_i>0$, satisfying the detailed balance relation
\begin{equation}\label{db}
Q_{ij}\pi_i = Q_{ji} \pi_j.
\end{equation} 
 From row sum zero, one directly verify that $\sum_{j=1}^n Q_{ji}\pi_j =0$ for $i=1,\cdots, n$. 
 
Using the detailed balance relation \eqref{db}, equation \eqref{master} can be recast in a symmetric form
\begin{equation}\label{a}
   \frac{dp_i}{dt}=\sum_{j=1}^n\omega_{ij}(\frac{p_j}{\pi_j}-\frac{p_i}{\pi_i}) = \sum_{j=1}^n \omega_{ij} \frac{p_j}{\pi_j}, \quad \text{ where }\, \omega_{ij}:=   Q_{ij}\pi_i.
\end{equation}
{\blue Since $\sum_j Q_{ij}=0$, we also have $\sum_{j} \omega_{ij}=0$. That is $\omega_{ii}=-\sum_{j\neq i} \omega_{ij} <0.$} 
One has directly that $\omega$ is nonpositive-definite   matrix $\omega\in \mathbb{R}^{n\times n}$ satisfying
\begin{equation*} 
\sum_{j=1}^n \omega_{ij}=0, \quad  \omega_{ij}=\omega_{ji}, \quad \sum_{i,j=1}^n \xi_i \omega_{ij} \xi_j = -\frac12 \sum_{i \neq j} \omega_{ij} (\xi_i - \xi_j)^2 \leq 0 \quad \forall (\xi_i)_{i=1}^n \in \bR^n.  
\end{equation*}

Below,  we rewrite equation \eqref{master} as an Onsager's gradient flow. For any convex function $\phi(x)$, $\phi''> 0$, we have
 \begin{equation}\label{ma1}
  \frac{\ud  p_i}{dt} =  \sum_{j=1}^n \omega_{ij} \theta_{ij}(p) \bbs{\phi'(\frac{p_j}{\pi_j})- \phi'(\frac{p_i}{\pi_i})},   
 \end{equation}
 where 
\begin{equation}\label{thetaPP}
    \theta_{ij}(p)=\theta\bbs{\frac{p_i}{\pi_i}, \frac{p_j}{\pi_j}}, \quad \textrm{with}\quad \theta\bbs{x, y} := \frac{x-y}{\phi'(x)- \phi'(y)}.
    \end{equation}
We now recast the above Kolmogorov forward equation as Onsager's gradient flow form.
Define
 \begin{equation*}
 \mL_{ij}(p):= -\omega_{ij} \theta_{ij}(p), \,\, j\neq i, \quad \mL_{ii}(p) := -\sum_{j=1, j\neq i}^n \mL_{ij}(p).
 \end{equation*}
Notice $(\mL_{ij}(p)): \sP(\mathcal{X}) \to \mathbb{R}^{n\times n}$
is a nonnegative definite matrix $\mL(p)\in \mathbb{R}^{n\times n}$ satisfying
 \begin{equation*}
 \sum_{j=1}^n \mL_{ij}(p)=0. 
 \end{equation*}
 Define free energy, also named $\phi$--divergence, on the finite probability spaces: \begin{equation*}
 \mathrm{D}_{\phi}(p\|\pi) := \sum_{i=1}^n \phi(\frac{p_i}{\pi_i}) \pi_i.
  \end{equation*} 
 Then \eqref{ma1} can be recast as a strong Onsager's gradient flow
 \begin{equation} \label{o-gradient}
 \frac{\ud  p}{\ud t} = - \mL(p) \nabla_p \mathrm{D}_{\phi}(p\|\pi), \quad \text{ in detail } \frac{\ud  p_i}{dt} = - \sum_{j=1}^n \mL_{ij}(p)\phi'(\frac{p_j}{\pi_j}) = - \sum_{j=1}^n \mL_{ij}(p) \pt_{p_j}\mathrm{D}_{\phi}(p\|\pi),
 \end{equation}
where $ - \nabla_p\mathrm{D}_{\phi}(p\|\pi)$ is the generalized force and $\mL$ is the Onsager's response matrix.
The energy dissipation law can be derived directly: \begin{equation*}
 \frac{\ud   }{\ud t} \mathrm{D}_{\phi}(p\|\pi) = - \frac12 
 \sum_{
    \substack{
        i,j=1 \\
        i \neq j
    }}^n \omega_{ij}\theta_{ij}(p) \bbs{\pt_{p_i} \mathrm{D}_{\phi}(p\|\pi) - \pt_{p_j} \mathrm{D}_{\phi}(p\|\pi)}^2 \leq 0. 
 \end{equation*}
We present two examples for various choices of $\phi$.
\begin{exm}
 Take $\phi(x) = \frac{x^2}{2}$, then one has
 the standard gradient flow, which is exactly \eqref{a}.  
\end{exm}
\begin{exm}[Logarithm mean]\label{exm2}
 Equation \eqref{o-gradient} also forms a gradient flow of Kullback--Leibler (KL) divergence in $\mathcal{P}$. In detail, consider $\phi(x)=x\log x-x+1$ and then $\theta$ becomes logarithmic mean $$\theta(x,y)=\frac{x-y}{\log x-\log y}.$$  
  Then the free energy function forms the KL divergence 
\begin{equation*}
 \mathrm{D}_{\mathrm{KL}}(p\|\pi)=\sum_{i=1}^np_i\log\frac{p_i}{\pi_i}. 
\end{equation*}
Then equation \eqref{o-gradient} becomes
\begin{equation}\label{JKO0}
\frac{dp}{dt}=-\mL(p)\nabla_{p} \mathrm{D}_{\mathrm{KL}}(p\|\pi),
\end{equation}
where $\mL(p)\in \mathbb{R}^{n\times n}$ is a symmetric non-negative definite matrix and $\nabla_p \mathrm{D}_{\mathrm{KL}} (p\|\pi)\in \mathbb{R}^n$ is a vector. 
We refer to \cite{maas2011gradient, chow2012, Mielk} for comprehensive studies on the Wasserstein-2 gradient flow reformulation of \eqref{JKO0} and the associated metric properties for $(\sP(\sX), W_2).$
\end{exm}

It is easy to see that to formulate an Onsager's gradient flow, $\theta(x,y)=\frac{x-y}{\phi'(x)-\phi'(y)}$ with $\phi''(x)>0$ will automatically satisfy (i) $\theta(x,y)=\theta(y,x)$, (ii) $\theta(x,y)> 0$ for $xy\ne 0$ and $\theta(x,y)\in C^1$.  However, to formulate the mean field games, which is an optimal control problem in $\sP(\mathcal{X})$, we need to relax the gradient flow as a general continuity equation with nonlinear activation function $\theta(x,y)$  in the form of 
\begin{equation}\label{antisymmetric}
    \frac{\ud}{\ud t} p_i +   \sum_{j\in\mathcal{N}_i} \sqrt{\omega_{ij}} \theta_{ij}(p)v_{ij}  = 0, \quad v_{ij}= -v_{ji}, \quad i=1, \cdots, n.
\end{equation}
Here $\mathcal{N}_i$ is neighborhood index sets $\{j\}$ of $i$ such that $\omega_{ij}>0$.
This discrete continuity equation was proposed in Mass's seminal paper \cite{maas2011gradient} and used to study the discrete Wasserstein distance in the Benamou-Brenier formulation as we will explain in Definition \ref{def_metric}. We remark in \eqref{antisymmetric},   $\sqrt{\omega_{ij}} \theta_{ij}v_{ij}$ represents a weighted net flux from $i$ to $j$, which is antisymmetric. Since the weight $\sqrt{\omega_{ij}}\theta_{ij}$ is symmetric, hence   $v_{ij}$ is antisymmetric.  
More general representations for the fluxes read as 
\begin{equation}\label{Kirchhoff}
    \frac{\ud}{\ud t} p_i + \sum_{j\in\mathcal{N}_i} (J_{ij} -J_{ji}) = 0,  \quad i=1, \cdots, n.
\end{equation}
This  discrete continuity equation can be regarded as a dynamic version of Kirchhoff's circuit laws on graphs, which  will be rewritten  in terms of a divergence operator on the graph in the next section. To ensure the positivity of $p_i$ in the continuity equation \eqref{antisymmetric},  \textsc{Mass} proposed  \cite{maas2011gradient}
\begin{itemize} 
\item[(iv)]
 Positivity condition: $\theta(x, y) = 0, \mbox{ if } xy=0$
\end{itemize}
and noticed that the logarithm mean in Example \ref{exm2}  satisfies this property. Due to assumption (iv) for $\theta$ and the antisymmetric net flux in \eqref{antisymmetric}, the finite probability space is invariant, i.e., the continuity equation has positivity preserving, and conservation of total probability.
These four assumptions on $\theta$ were proposed in \cite{maas2011gradient}.

From now on, we summarize the general properties 
of activation functions $\theta: \bR^+\times \bR^+ \to \bR^+$. 
Assume that the following conditions of $\theta$ hold:
\begin{itemize} 
\item[(i)]
\begin{equation*}
\theta(x, y)=\theta(y, x);  
\end{equation*}
\item[(ii)]
\begin{equation*}
\theta(x, y)> 0, \mbox{ if } xy\neq  0; 
\end{equation*}
\item[(iii)]
\begin{equation*}
\theta(x, y)\in C^{1};
\end{equation*}
\item[(iv)]
\begin{equation*}
\theta(x, y) = 0, \mbox{ if } xy=0.
\end{equation*}
\end{itemize}

 In the literature, nonlinear activation functions $\theta$ have been widely used in modeling gradient flows and reaction-diffusion equations, for instance, in chemical reactions \cite{Hanggi84, MM, GL1}, and evolutionary game theory \cite{Li-population, li-geometry}. Given a high dimensional point cloud, there are other natural ways to design a Markov chain on the point cloud using geometric structures \cite{gao2023data}, which naturally provides an activation function.
 
There are other choices of activation functions $\theta$ other than the two examples mentioned above.
The following three activation functions (arithmetic mean, geometric mean, and Harmonic mean) do not come 
from a convex function $\phi$ with Onsager's principle.  However, we explain in the next subsection that it comes from a generalized gradient flow.
\begin{example}[Arithmetic mean]
\begin{equation*}
    \theta(x,y)=\frac{x+y}{2}.
\end{equation*}
\end{example}
\begin{example}[Geometric mean]
\begin{equation*}
    \theta(x,y)=\sqrt{xy}. 
\end{equation*}
\end{example}
\begin{example}[Harmonic mean]
\begin{equation*}
   \theta(x,y)=\frac{2}{\frac{1}{x}+\frac{1}{y}}.  
\end{equation*}
\end{example}
{\blue Notice Example 3.3 does not satisfy condition (iv).}

\subsection{Generalized gradient flow determines the activation functions $\theta(x,y)$}\label{sec3.2}
We start from \eqref{a} and 
    recast it as a generalized gradient flow w.r.t. the  $\phi$-divergence $\mathrm{D}_\phi(p||\pi)=\sum_i  \phi (\frac{p_i}{\pi_i}) \pi_i.$ A special example is Kullback--Leibler (KL) divergence with $\phi(x)=x\log x-x+1$.
From \eqref{a}, we have
\begin{equation} \label{dissi_1}
\begin{aligned}
   \frac{\ud}{\ud t} \mathrm{D}_\phi(p||\pi) =&\sum_i \phi'(\frac{p_i}{\pi_i})\frac{dp_i}{dt}=\sum_{i,j} \omega_{ij}\phi'(\frac{p_i}{\pi_i})(\frac{p_j}{\pi_j}-\frac{p_i}{\pi_i})\\
   =& -\sum_{i,j} \frac12\omega_{ij}[\phi'(\frac{p_j}{\pi_j})-\phi'(\frac{p_i}{\pi_i})](\frac{p_j}{\pi_j}-\frac{p_i}{\pi_i})\leq 0.
 \end{aligned}
\end{equation}
Recall one choice of $\theta$ in \eqref{thetaPP}, which can be rewritten as
\begin{equation}\label{theta_c1}
\phi'(x)-\phi'(y) = \frac{x-y}{\theta(x,y)}.
\end{equation} 
Replacing $\frac{p_j}{\pi_j}-\frac{p_i}{\pi_i}$ using \eqref{theta_c1}, then \eqref{dissi_1} becomes Onsager type dissipation relation
\begin{equation} 
\begin{aligned}
   \frac{\ud}{\ud t} \mathrm{D}_\phi(p||\pi) =&  -\sum_{i,j} \frac12\omega_{ij}\theta_{ij}(p)[\phi'(\frac{p_j}{\pi_j})-\phi'(\frac{p_i}{\pi_i})]^2 \leq 0.
 \end{aligned}
\end{equation}

One indeed has more general choice of $\theta$ via general choice of dissipation function $\psi^*(\xi)$. Assume $\psi^*(\xi)$ is an even, convex function and satisfies $\psi^*(0)=0$.   A directly consequence of these conditions of $\psi^*$  is that 
$$-(\psi^*)'(\xi)=(\psi^*)'(-\xi), \quad \xi (\psi^*)'(\xi)\geq 0.$$ 
Here $'$ stands for derivative.
Based on this, define a general $\theta$ such that
\begin{equation}\label{theta_c2}
(\psi^*)'(\phi'(x)-\phi'(y)) = \frac{x-y}{\theta(x,y)}.
\end{equation}
Then  
replacing $\frac{p_j}{\pi_j}-\frac{p_i}{\pi_i}$ using \eqref{theta_c2}, then \eqref{dissi_1} becomes generalized dissipation relation
\begin{equation} 
\begin{aligned}
   \frac{\ud}{\ud t} \mathrm{D}_\phi(p||\pi) =&  -\sum_{i,j} \frac12\omega_{ij}\theta_{ij}(p)[\phi'(\frac{p_j}{\pi_j})-\phi'(\frac{p_i}{\pi_i})](\psi^*)'(\phi'(\frac{p_j}{\pi_j})-\phi'(\frac{p_i}{\pi_i})) \leq 0.
 \end{aligned}
\end{equation}
We remark this general choice of $\theta$ based on given free energy function $\phi$ and dissipation function $\psi^*$ always satisfies (i)-(iii) but assumption (iv) is not always true; for instance Example \ref{exm3.6} below. 
   
 Based on this general choice of $\theta$ in \eqref{theta_c2}, replacing $\frac{p_j}{\pi_j}-\frac{p_i}{\pi_i}$ using \eqref{theta_c2} again, 
  the forward equation \eqref{a} is recast as a generalized gradient flow
 \begin{equation}\label{tmrhot}
      \dot{p}_i = -\sum_j \omega_{ij}\theta_{ij}(p) (\psi^*)'\bbs{\phi'(\frac{p_i}{\pi_i})-\phi'(\frac{p_j}{\pi_j}))}. 
 \end{equation}
 Indeed,
 introduce the dissipation functional 
 $$\Psi^*(p,\xi):= \frac12\sum_{ij} \omega_{ij} \theta_{ij}(p) \psi^*(\xi_j-\xi_i).$$
 Then 
 $
     \la \nabla_\xi\Psi^*(p, \xi), \tilde{\xi} \ra = - \sum_{ij} \omega_{ij}\theta_{ij}(p) (\psi^*)'(\xi_j-\xi_i) \tilde{\xi}_i
 $
 and \eqref{tmrhot} becomes the generalized gradient flow in a strong form
 \begin{equation}\label{g-gradient}
       \dot{p}  = \nabla_\xi \Psi^*(p, \xi)\big|_{\xi = - \nabla_p \mathrm{D}_\phi(p||\pi)}.
 \end{equation}

 Particularly, taking $\phi(x)=x\log x - x+1$, then $\phi'(\frac{p_i}{\pi_i})=\log \frac{p_i}{\pi_i}$ and we revisit previous examples for activation functions as follows.
 \begin{example}[Arithmetic mean]\label{exm3.6}
\begin{equation*}
    \theta(x,y)=\frac{x+y}{2}.
\end{equation*}
In this case, $\psi^*(\xi)= 4 \log (\cosh(\xi/2))$. Note that $\psi^*(\xi)$ is not superlinear.
\end{example}
\begin{example}[Geometric mean]
\begin{equation*}
    \theta(x,y)=\sqrt{xy}. 
\end{equation*}
In this case, $\psi^*(\xi)= 4 \cosh(\xi/2)-4$. 
\end{example}
\begin{example}[Harmonic mean]
\begin{equation*}
   \theta(x,y)=\frac{2}{\frac{1}{x}+\frac{1}{y}}.  
\end{equation*}
In this case, $\psi^*(\xi)=  \cosh(\xi)-1$.
\end{example}
This generalized gradient flow formulation for the jump process was first developed by \cite{MPR14}.  We refer to \cite{Oliver} for a functional framework and  analysis for  generalized gradient flows for jumping processes, as well as more general 1-homogeneous activation functions $\theta$ such as Stolarsky
means. 

\subsection{Review of operators on weighted graphs and discrete optimal transport}
We first review standard calculus notations on a finite weighted graph.  Then we review discrete optimal transport problems in terms of the Benamou-Brenier formula, which motivates an optimal control formulation for potential mean field games on the finite probability space.  
\subsubsection{Calculus on finite weighted graphs}
Consider a weighted graph $G=(V, E, \omega)$. Here
$V:=\mathcal{X}=\{1,2,\cdots, n\}$ is the vertex set, and
$E:=\{(i,j), \,\, 1 \le i,j\le n, \,i\neq j,\, \omega_{ij}>0 \}$ is the edge index set with weights $\omega_{ij}>0$ {\blue for $i\neq j$}. {\blue Notice when we say $(i,j)\in E$, then it automatically implies $i\neq j$.} Denote neighborhood index sets 
${\mathcal{N}_i := \{j: (i,j) \in E}\}$.
Given a function  $\Phi \colon V \to \mathbb{R}, \,\, x_i \mapsto \Phi(x_i)$, denote $\Phi_i:= \Phi(x_i)$ for $i=1, \cdots,n$.  Then $\Phi$ can be identified as a vector (still denoted as $\Phi$) $\Phi=(\Phi_1,\cdots,\Phi_n)\in \mathbb{R}^n$. For a function $\Phi$, one defines a weighted gradient as a function $\dnabla \Phi \colon E \to \mathbb{R}$, 
\begin{equation*}
(i,j)   \, \mapsto \,\,  (\dnabla\Phi)_{i,j} :=\sqrt{\omega_{ij}}\,(\Phi_j-\Phi_i). 
\end{equation*} 
We call it a potential vector field on $E$. 
Obviously, it is antisymmetric
$$
(\dnabla\Phi)_{i,j} = - (\dnabla\Phi)_{j,i},\,\quad (i,j)\in E.
$$
A general vector field is a function on $E$ such that 
$v=\big( v_{ij} \big)_{(i,j)\in E}$
and is  antisymmetric
\begin{equation*}
v_{ij}=-v_{ji}, \quad (i,j) \in E. 
\end{equation*}
The divergence of a vector field $v$ is defined as a function $\ddiv(v) \colon E \to \mathbb{R}$,
\begin{equation*}
i \, \mapsto \, \ddiv(v)_i := \sum_{j\in {\mathcal{N}_i}}\sqrt{\omega_{ij}}\, v_{ij}.
\end{equation*}
For a function $\Phi$ on $V$, the graph Laplacian $\mathbb{L}_\omega \Phi \colon V \to \mathbb{R}$
is given by
$$
\mathbb{L}_\omega \Phi:=\ddiv\bbs{\dnabla\Phi }, \quad \text{ i.e., \, } i\, \mapsto \,
\bbs{\mathbb{L}_\omega \Phi}_i
= \sum_{j\in {\mathcal{N}_i}}\sqrt{\omega_{ij}}\, (\dnabla\Phi)_{i,j}
= \sum_{j\in {\mathcal{N}_i}}\omega_{ij}\,(\Phi_j-\Phi_i).
$$
Using the fact that
$$
\omega_{ii}=-\sum_{j\in {\mathcal{N}_i}}\omega_{ij} = -\sum_{j=1, j\ne i}^n \omega_{ij},
$$
One identifies  $\mathbb{L}_\omega \Phi= \omega \Phi$
, $\omega=(\omega_{ij})$ is a non-positive definite matrix. Laplacian $\mathbb{L}_\omega$ defined above is called combinatorial Laplacian. Here we follow the convention to regard the Laplacian operator as a non-positive operator.
The definition of operators $\dnabla$ and $\ddiv$ is not unique. The choices of $\dnabla$ and $\ddiv$ above are more close to the continuum limit. Another common choice is more measure-theoretic. Define 
$\overline{\nabla}\Phi_{ij} = \Phi_j-\Phi_i$, and $\overline{\mathrm{div}} (J)_i 
= \sum_{j\in {\mathcal{N}_i}} \,( J(i,j) - J(j,i))$. In terms of our notations, the net flux is
$\ddiv (v)_i 
= \sum_{j\in {\mathcal{N}_i}}\sqrt{\omega_{ij}} v_{ij}$; see \cite{Oliver} for more details.
There are also a normalized version of the graph Laplacian (a.k.a. probabilistic Laplacian) to make the diagonal entries to be -1.

\subsubsection{Discrete optimal transport problems}
We recall the Definition \ref{def_metric} of the discrete Wasserstein distance in the Benamou-Brenier formulation \cite{maas2011gradient}. We also refer to \cite{erbar2012ricci, Mielk} for the geodesic convexity of relative entropy and refer to \cite{Erbar, Dejan} for the generalizations of nonlocal Wasserstein distances. 

Consider the following generalized optimal control problem on the finite probability space. 

\begin{defn}[\cite{maas2011gradient}]\label{def_metric}
For any $p^0$, $p^1\in \mathcal{P}(\mathcal{X})$, $\alpha>1$, define the Wasserstein-$\alpha$ distance $W_\alpha\colon\mathcal{P}(\mathcal{X})\times\mathcal{P}(\mathcal{X})\rightarrow\mathbb{R}$ as
\begin{equation}\label{BBD}
W_\alpha(p^0,p^1)^\alpha:= \inf_{p_t, v_t}~\Big\{\frac{1}{2}\int_0^1 \sum_{(i,j)\in E}  |v_{ij}(t)|^\alpha\, \theta_{ij}(p_t) dt\Big\},
\end{equation}
where the infimum is taken over continuously differentiable functions $(p_t, v_t)$, $t\in [0,1]$ with $v_{ij}=-v_{ji}$, satisfying the discrete continuity equation with activation function $\theta$, i.e., 
\begin{equation}\label{D_C_E}
\frac{d}{dt}p_t(i)+ \ddiv(\theta(p_t)v_t)_i
=0,\quad p_0=p^0,\quad p_1=p^1. 
\end{equation}
In particular, if $\alpha=2$, $W_2:=W$ is the discrete Wasserstein-$2$ distance \cite{maas2011gradient}. 
\end{defn}

Here and in the following context, $(\theta(p_t)v_t)_{ij}$ is understood as $\theta_{ij}(p_t)v_t(ij)$.

We notice that if in continuity equation \eqref{D_C_E} we take
\begin{equation*}
   v_{ij}= - \bbs{\dnabla(\phi'(\frac{p}{\pi}))}_{i,j}=- \sqrt{\omega_{ij}}\bbs{ \phi'(\frac{p_j}{\pi_j})- \phi'(\frac{p_i}{\pi_i})}, 
\end{equation*}
then it recovers the strong Onsager’s gradient flow \eqref{ma1}. 
On the other hand, if we take
$$
v_{ij} = \sqrt{\omega_{ij}} (\psi^*)'\bbs{\phi'(\frac{p_i}{\pi_i})-\phi'(\frac{p_j}{\pi_j})},
$$
in the continuity equation \eqref{D_C_E}, then it recovers the generalized gradient flow formulation \eqref{g-gradient}.
In the next subsection, we generalize the optimal control formulation in \eqref{def_metric} to model potential mean field games on finite state spaces.

\subsection{Potential mean field games and master equation on finite states}\label{sec3.4_pMFG_d}
In this subsection, we present the potential mean field game system on finite states \eqref{MFG_discrete} and then derive the associated functional Hamilton-Jacobi equation \eqref{HJErhoD} on $\sP(\sX)$, whose bi-characteristics are the   potential mean field game system \eqref{MFG_discrete_H0}. Finally, we will also give a derived master equation \eqref{discrete_master_1} by directly taking derivatives in the functional Hamilton-Jacobi equation \eqref{HJErhoD}.

Given a finite state space
$\mathcal{X}=\big\{1,2,\cdots, n\big\}$, we assume there exists a reversible Markov chain \eqref{a} on $\sX$, which defines the weight $\omega_{ij}=Q_{ij}\pi_i$. This is equivalent to prescribe a weighted undirected graph $(V,E,\omega).$
Then based on the assumptions (i)-(iv) on the activation function $\theta(x,y)$ in Section \ref{sec3.1}, we are ready to set up potential mean field games 
with the finite state
set for players     $\mathcal{X}=\big\{1,2,\cdots, n\big\}$.

Denote $p_t = (p_t(i))_{i=1}^n\in \sP(\mathcal{X})$ as the   population state density   at fixed $t$, which represents the state density $p_i$ of the population   at     state $i$. 
To describe the dynamics of the population state density  $p_s$, we regard $v_{ij}$ in the continuity equation with activation function $\theta(x,y)$ for $p_s$   as the control variable for the population. 
In potential mean field games, the Nash equilibrium of all players aims to solve a variational problem in the finite probability space. In detail, we define a potential functional as $\sF\in C^{1}(\mathcal{P}(\mathcal{X}))$ and we denote a terminal functional as 
$\sG\in C^1(\mathcal{P}(\mathcal{X}))$. We also consider a class of Lagrangian  functions on a discrete set as a running cost. Define $L_{ij}\colon \mathbb{R}\rightarrow\mathbb{R}$, such that 
\begin{equation}\label{L_asm}
    \begin{aligned}
&L_{ij}(a)=L_{ji}(a), \qquad \textrm{for any $(i,j) \in E, \quad a\in\mathbb{R}$;} 
      \\
      &{\blue L_{ij}(-a)=L_{ij}(a), \quad L_{ij}(0)=0;}
      \\
      &L_{ij}(a) \textrm{ is strictly convex and coercive (superlinear).}
    \end{aligned}
\end{equation}
An example of $L_{ij}$ is given by
\begin{equation*}
   L_{ij}(a)= a^2.  
\end{equation*}
Then we will see the running cost is exactly the discrete Benamou-Brenier formulation \eqref{def_metric} for the Wasserstein metric on $\sP(\sX)$.

{\blue
\begin{lem}\label{lem:convex}
Assume $L''\geq 0$, $L(0)=0$ and $\theta(y,z)$ is concave w.r.t. $(y,z)$. 
Then we have the function $\Lambda(x,y,z):=  L(\frac{x}{\theta(y,z)})\theta(y,z)$ is convex.
\end{lem}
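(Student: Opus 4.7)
The plan is to recognize $\Lambda$ as the composition of the perspective of the one–variable convex function $L$ with the concave function $\theta(y,z)$, and then exploit a monotonicity property of the perspective in its scale variable. Concretely, I would introduce the perspective $\varphi(x,t) := t\,L(x/t)$ defined on $\mathbb{R}\times(0,\infty)$, so that $\Lambda(x,y,z)=\varphi(x,\theta(y,z))$. The proof then reduces to three ingredients about $\varphi$ and $\theta$, and a short composition argument.

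First, I would recall the classical fact that the perspective of a convex function is jointly convex: since $L''\ge 0$, the function $\varphi(x,t)=tL(x/t)$ is convex on $\mathbb{R}\times(0,\infty)$. Second, I would show that $\varphi$ is nonincreasing in $t$ for every fixed $x$. A direct computation gives $\partial_t\varphi(x,t)=L(x/t)-(x/t)L'(x/t)$, and the tangent-line inequality for the convex function $L$ at the point $u=x/t$ reads $L(0)\ge L(u)+L'(u)(0-u)$; together with $L(0)=0$ this yields $uL'(u)\ge L(u)$, i.e.\ $\partial_t\varphi\le 0$. This is precisely the step where both hypotheses $L''\ge 0$ and $L(0)=0$ are used.

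Third, I would combine these two properties with the concavity of $\theta$ via the standard composition rule. For $\lambda\in[0,1]$ and two points $(x_i,y_i,z_i)$, $i=1,2$, set $\bar{x}=\lambda x_1+(1-\lambda)x_2$, and similarly for $\bar{y},\bar{z}$, and write $\theta_i=\theta(y_i,z_i)$. Concavity of $\theta$ gives $\theta(\bar{y},\bar{z})\ge\lambda\theta_1+(1-\lambda)\theta_2$; monotonicity of $\varphi$ in $t$ then gives
\[
\varphi\bigl(\bar{x},\theta(\bar{y},\bar{z})\bigr)\ \le\ \varphi\bigl(\bar{x},\lambda\theta_1+(1-\lambda)\theta_2\bigr);
\]
and finally joint convexity of $\varphi$ bounds the right-hand side by $\lambda\varphi(x_1,\theta_1)+(1-\lambda)\varphi(x_2,\theta_2)$, i.e.\ by $\lambda\Lambda(x_1,y_1,z_1)+(1-\lambda)\Lambda(x_2,y_2,z_2)$. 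This chain of inequalities is exactly convexity of $\Lambda$.

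The only subtle step is the monotonicity in $t$, which genuinely requires $L(0)=0$ (without it, e.g.\ $L(u)=u^2+1$ would produce a perspective that is not monotone, and the argument collapses). The boundary behavior as $\theta\to 0^{+}$ can be handled by continuous extension, using the assumption $\theta\ge 0$ from Section~\ref{sec3.1} and the recession limit $\lim_{t\to 0^{+}} tL(x/t)=|x|\,L'(\pm\infty)$; alternatively one may simply restrict to the open set $\{\theta>0\}$, where the continuity equation is nondegenerate, and note that convexity extends to the closure by lower semicontinuity.
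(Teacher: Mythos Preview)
Your argument is correct and takes a genuinely different route from the paper. The paper computes the second variation of $\Lambda$ directly along an arbitrary direction $(\tilde x,\tilde y,\tilde z)$ and groups the result into two nonnegative pieces: one term $\ddot\theta\,[L(u)-uL'(u)]$, nonnegative because $\ddot\theta\le 0$ (concavity of $\theta$) and $L(u)-uL'(u)\le 0$ (convexity of $L$ together with $L(0)=0$); and a second term proportional to $L''(u)(\tilde x-u\dot\theta)^2/\theta$, which is manifestly nonnegative. You instead recognize $\Lambda$ as the composition of the perspective $\varphi(x,t)=tL(x/t)$ with the concave map $(y,z)\mapsto\theta(y,z)$, and invoke (i) joint convexity of the perspective, (ii) the monotonicity $\partial_t\varphi\le 0$, and (iii) the standard composition rule for a jointly convex function that is nonincreasing in one argument with a concave inner function in that slot. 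The key analytic ingredient---that $L(0)=0$ and convexity force $L(u)\le uL'(u)$---is identical in both proofs; what differs is the packaging. Your approach is more conceptual, avoids any differentiability assumptions on $L$ or $\theta$ beyond convexity/concavity, and handles the boundary $\theta\to 0^+$ cleanly via the recession extension you mention; the paper's Hessian computation is more self-contained but tacitly needs $L,\theta\in C^2$ and $\theta>0$.
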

\begin{proof}
By elementary calculations, the second variation of $ \Lambda(x,y,z)$ is
\begin{align*}
\frac{\ud^2}{\ud \eps^2}\Big|_{\eps=0} \Lambda(x+\eps \tilde{x}, y+\eps \tilde{y}, z+\eps \tilde{z})
= \theta[L(\frac{x}{\theta})-\frac{x}{\theta} L'(\frac{x}{\theta})] \theta_2 +  L''(\frac{x}{\theta})\frac{x^2}{2\theta} (x_1-\theta_1)^2 \geq 0,
\end{align*}
where $x_1:= \frac{\tilde{x}}{x}$, $\theta_1:= \frac{\tilde{y}\theta_y + \tilde{z}\theta_z}{\theta}$, $\theta_2:=\frac{1}{2\theta}(\tilde{y}^2\theta_{yy} + 2 \tilde{y}\tilde{z}\theta_{yz} + \tilde{z}^2 \theta_{zz}).$ Here in the first term above, we used $\theta_2\leq 0$ due to concavity of $\theta$, and $L(v)-vL'(v)\leq 0$ due to convexity of $L$ and $L(0)=0$. 
\end{proof}
}

We now define potential mean field games, which is formulated as an optimal control problem of population state density $p_s\in \sP(\sX)$. This variational representation \eqref{optimalD_mfg} is discrete analogs of the one \eqref{optimal_MFG} in continuous state domain. 

\begin{defn}[Potential games on finite state space]
Denote a value function $\mathcal{U}\colon \mathcal{P}(\mathcal{X}) \times \mathbb{R}_+  \rightarrow\mathbb{R}$ as the maximum value of an optimal control problem   
 \begin{equation}\label{optimalD_mfg}
 \begin{aligned}
 \sU(p,t) := &\sup_{v_s,p_s} \left\{\sG(p_T) -  \int_t^T  \bbs{ \frac12 \sum_{(i,j)\in E}  \theta_{ij}(p_s)  L_{ij}(v_s(ij))- \sF(p_s) }   \ud s \right\} ,\\
 &\st \frac{dp_s(i)}{dt} + \ddiv \bbs{\theta(p_s)v_s}_i 
 = 0, \quad v_s(ij)=-v_s(ji), \quad t \leq s \leq T, \quad p_t(i) = p_i.
 \end{aligned}
 \end{equation}
The above variational problem is taken among all continuously differentiable probability functions $p_s$ and vector field functions $v_s$ for the time interval $s\in [t, T]$.  
  \end{defn}
Notice we add $\frac12$ factor in front of running cost to take into account the double  counting of $v_{ij}$ due to $v_{ij}=-v_{ji}$.

We remark that in the  optimal transport problem from $p^0$ to $p^1$ in the  probability space $\sP(\mathcal{X})$, \cite[Theorem 3.12(2)]{maas2011gradient} proved that the finiteness of Wasserstein distance $W(p^0, p^1)$ is equivalent to the support of $p^0$ and $p^1$ is invariant under some conditions of $\theta(x,y)$. However, for our optimal control problem \eqref{optimalD_mfg}, the terminal density is not fixed and is determined via a smooth terminal cost $\sG$. Thus an infinite running cost is automatically excluded, and there is no need for additional conditions on $\theta$. 

{\blue
\begin{prop}\label{prop:convex}
Assume $L$ satisfies \eqref{L_asm}, $\theta(y,z)$ satisfies conditions (i)-(iv) and $\theta(y,z)$ is further assumed to be concave w.r.t. $(y,z)$. Moreover, suppose $\sG(\cdot)$ and $\sF(\cdot)$ are concave. Then define $m_{ij}:= \theta_{ij}(p)v_{ij}$ and \eqref{optimalD_mfg} can be reformulated as a convex optimization problem for density-flux pair $(p_s,m_s),t\leq s\leq T$
\begin{equation}\label{convexP}
 \begin{aligned}
 \sU(p,t) := &\sup_{(m_s,p_s)} \left\{\sG(p_T) - \int_t^T  \bbs{\frac12\sum_{(i,j)\in E}  \theta_{ij}(p_s)  L_{ij}\bbs{\frac{m_s(ij)}{\theta_{ij}(p_s)}}- \sF(p_s) }   \ud s \right\} ,\\
 &\st \frac{dp_s(i)}{dt} + \ddiv \bbs{m_s}_i  
 = 0, \quad m_s(ij)=-m_s(ji), \quad t \leq s \leq T, \quad p_t(i) = p_i.
 \end{aligned}
 \end{equation}
 Moreover, there exists a unique solution to this convex optimization problem.
\end{prop}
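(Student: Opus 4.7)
The plan is to (i) justify the density-flux rewrite as a convex program, (ii) invoke the direct method for existence, and (iii) use strict convexity for uniqueness. For (i), substitute $m_{ij} = \theta_{ij}(p)v_{ij}$ and adopt the standard perspective convention $\theta L(m/\theta):=0$ when $\theta=m=0$ and $+\infty$ when $\theta=0$, $m\ne 0$. On edges with $\theta_{ij}(p_s)>0$ the flux $m_{ij}$ uniquely determines $v_{ij}$, whereas condition (iv) on $\theta$ together with finiteness of the cost forces $m_{ij}=0$ on degenerate edges and renders $v_{ij}$ irrelevant there. The continuity constraint becomes linear in $(p,m)$. By Lemma \ref{lem:convex} applied edgewise (using concavity of $\theta$ in $(p_i,p_j)$ and strict convexity of $L_{ij}$ with $L_{ij}(0)=0$), the map $(p,m)\mapsto \theta_{ij}(p)L_{ij}(m_{ij}/\theta_{ij}(p))$ is jointly convex. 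Since $\sG$ and $\sF$ are concave, the objective of \eqref{convexP} is concave on the affine feasible set.

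For (ii), take a maximizing sequence $(p^n_s,m^n_s)$. The densities lie in the compact simplex $\sP(\sX)$, so they are uniformly bounded; superlinearity of $L_{ij}$ makes the running cost coercive in the flux, giving weak $L^\alpha$-precompactness of $m^n$ for some $\alpha>1$. The linear continuity constraint passes to the weak limit, and the running cost is weakly lower semicontinuous by joint convexity, while the concave $\sG,\sF$ are weakly upper semicontinuous on the simplex. A limit point is therefore a maximizer.

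For (iii), observe that for $\theta>0$ the perspective function $m\mapsto \theta L(m/\theta)$ inherits strict convexity in $m$ from the strict convexity of $L_{ij}$. If two maximizers $(p^1,m^1)\ne(p^2,m^2)$ existed, evaluating the objective at the midpoint would strictly improve the flux cost on edges where the common $\theta_{ij}$ is positive unless $m^1=m^2$ there; degenerate edges contribute zero on both sides by the perspective convention. Equality of fluxes combined with the linear continuity equation and common initial datum $p$ then forces $p^1=p^2$.

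The main obstacle is the careful treatment of edges where the activation vanishes. One must verify that the perspective extension of $\theta L(\cdot/\theta)$ is precisely the lower semicontinuous convex envelope, so that joint convexity and weak lower semicontinuity persist up to this boundary, and that condition (iv) together with the coercivity of $L_{ij}$ rules out concentration of flux along degenerate edges when passing to the limit. A secondary point is ensuring uniform-in-$s$ control of $p^n_s$ (as opposed to merely integral bounds); this follows from a Gr\"onwall-type estimate in the continuity equation on the finite-dimensional simplex using the weak flux bound.
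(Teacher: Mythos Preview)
Your proposal is correct and follows essentially the same line as the paper: invoke Lemma~\ref{lem:convex} for joint convexity of the perspective cost, note the linear constraint, and conclude existence and uniqueness from coercivity and strict convexity. The paper's own argument is extremely terse—it simply asserts that the payoff is strictly concave and coercive and the constraint linear, hence a unique maximizer exists—whereas you have supplied the standard details (perspective convention at $\theta=0$, direct method with weak compactness and lower semicontinuity, and the strict-convexity-in-$m$ argument for uniqueness) that the paper leaves implicit.
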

From Lemma \ref{lem:convex} and the assumptions for $L$, we know the total payoff is strictly concave and coercive w.r.t the density-flux pair $(p_s,m_s),t\leq s\leq T$. Moreover, the constraint is linear and hence concave. Thus this convex optimization problem has a unique solution.
}

Next, we derive the Nash equilibrium of potential mean field games as the maximizer of variational problem \eqref{optimalD_mfg}. 
\begin{prop}[Nash equilibrium in potential mean field games]
Suppose the assumption \eqref{L_asm} on Lagrangian $L$ holds. Let $H_{ij}$ be the convex conjugate of $L_{ij}$, $i,j\in \sX$. Assume $p_s>0$ for $t\leq s\leq T$, the Euler-Lagrange equations of variational problem \eqref{optimalD_mfg} are given below. 
 $(p_s, \Phi_s)$, $t\leq s\leq T$, satisfies 
\begin{equation}\label{MFG_discrete}
\left\{\begin{aligned}
&\frac{dp_s(i)}{ds} + \sum_{j\in {\mathcal{N}_i}} \sqrt{\omega_{ij}}\theta_{ij}(p_s) H'_{ij}((\dnabla\Phi_s)_{i,j}) = 0, \quad t\leq s\leq T,\\
&\frac{d\Phi_s(i)}{ds}  +  \sum_{j\in {\mathcal{N}_i}} H_{ij}((\dnabla\Phi_s)_{i,j})\frac{\partial \theta_{ij}}{\partial p_i}+  \frac{\partial}{\partial p_i}\mathcal{F}(p_s)  =0, \quad t\leq s\leq T,\\
& p_t(i)=p_i, \quad \Phi_T(i)  = \frac{\partial}{\partial {p_T}(i)}\mathcal{G}(p_T). 
\end{aligned}\right.
\end{equation}
Additionally, the optimal velocity in \eqref{optimalD_mfg} is given by feedback control
\begin{equation*}
v_s(ij)=H'_{ij}((\dnabla\Phi_s)_{i,j}), \quad (i,j) \in E. 
\end{equation*}
\end{prop}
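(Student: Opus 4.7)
The plan is to derive the Euler--Lagrange system from the variational problem \eqref{optimalD_mfg} by introducing a time-dependent Lagrange multiplier $\Phi_s(i)$ for the continuity-equation constraint, and then performing variations with respect to the control $v_s(ij)$, the state $p_s(i)$, and $\Phi_s$ itself. Concretely, I would write the saddle-point formulation
\begin{equation*}
\sup_{v_s,p_s}\inf_{\Phi_s}\Bigl\{ \sG(p_T) - \int_t^T\!\Bigl[\tfrac12\!\!\sum_{(i,j)\in E}\!\!\theta_{ij}(p_s)L_{ij}(v_s(ij)) - \sF(p_s) + \sum_i\Phi_s(i)\bigl(\tfrac{d}{ds}p_s(i)+\ddiv(\theta(p_s)v_s)_i\bigr)\Bigr]\,ds\Bigr\},
\end{equation*}
and then prepare it for differentiation by two integration-by-parts manipulations: in time, $\int_t^T \Phi_s(i)\dot p_s(i)\,ds$ is turned into $-\!\int \dot\Phi_s(i)p_s(i)\,ds$ plus boundary terms $\Phi_T(i)p_T(i)-\Phi_t(i)p_t(i)$; on the graph, the antisymmetry $v_{ij}=-v_{ji}$ and the symmetry $\omega_{ij}=\omega_{ji}$ give the discrete Green identity
\begin{equation*}
\sum_i \Phi_s(i)\ddiv(\theta(p_s)v_s)_i = -\tfrac12\sum_{(i,j)\in E}(\dnabla\Phi_s)_{i,j}\,\theta_{ij}(p_s)\,v_s(ij).
\end{equation*}

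Varying with respect to $v_s(ij)$ (accounting for antisymmetry and the factor $\tfrac12$ from double-counting) yields the pointwise equation $L'_{ij}(v_s(ij))=(\dnabla\Phi_s)_{i,j}$, which under \eqref{L_asm} inverts to the feedback rule $v_s(ij)=H'_{ij}((\dnabla\Phi_s)_{i,j})$. Substituting back and using the Legendre identity $\pi v - L_{ij}(v)=H_{ij}(\pi)$ with $\pi=(\dnabla\Phi_s)_{i,j}$, the reduced integrand becomes a functional of $(p_s,\Phi_s)$ alone. Varying it with respect to $p_s(k)$ then produces the adjoint equation
\begin{equation*}
\dot\Phi_s(k) + \tfrac12\sum_{(i,j)\in E}H_{ij}((\dnabla\Phi_s)_{i,j})\,\frac{\partial\theta_{ij}}{\partial p_k} + \frac{\partial\sF}{\partial p_k}(p_s)=0.
\end{equation*}
The final step is to collapse the edge sum to a neighbor sum at $k$: since $\theta_{ij}$ depends only on $(p_i,p_j)$, the derivative $\partial\theta_{ij}/\partial p_k$ vanishes unless $k\in\{i,j\}$, and the symmetries $\theta_{ij}=\theta_{ji}$, $H_{ij}=H_{ji}$ together with $H_{ij}(-\pi)=H_{ij}(\pi)$ (inherited from $L_{ij}(-a)=L_{ij}(a)$) make the $i=k$ and $j=k$ contributions equal, eliminating the $\tfrac12$ and yielding the claimed form in \eqref{MFG_discrete}. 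Varying with respect to $\Phi_s$ returns the continuity equation with the optimal $v_s$ plugged in, and the free terminal condition $p_T$ produces the boundary identification $\Phi_T(i)=\partial_{p_T(i)}\sG(p_T)$.

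The step I expect to require the most care is the edge-bookkeeping: getting the factors of $\tfrac12$, the distinction between ordered pairs in $E$ and unordered neighbor sums over $\mathcal N_i$, and the antisymmetric/symmetric parity of $v_{ij}$, $\theta_{ij}$, and $H_{ij}$ all to align so that the adjoint equation matches \eqref{MFG_discrete} exactly. The assumption $p_s>0$ is used here only to guarantee differentiability of $\theta_{ij}(p_s)$ and the validity of the Legendre inversion, so that all variations are classical; the positivity is automatic along the optimum thanks to condition (iv) on $\theta$ applied to the continuity equation.
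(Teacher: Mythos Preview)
Your proposal is correct and follows essentially the same Lagrange-multiplier approach as the paper: set up the saddle-point functional, integrate by parts in time and on the graph, and take first variations. The only minor differences are that the paper varies all four quantities $(v_s, p_s, \Phi_s, p_T)$ simultaneously rather than eliminating $v_s$ first via the Legendre transform, and that the paper invokes $p_s>0$ specifically to cancel the factor $\theta_{ij}(p_s)$ multiplying the stationarity condition $L'_{ij}(v_{ij})-(\dnabla\Phi_s)_{i,j}$ in the $v$-variation, not for the Legendre inversion itself (which is guaranteed by strict convexity of $L_{ij}$).
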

\begin{proof}
 We derive the Euler-Lagrange equations by using the Lagrangian multiplier $\Phi_s=(\Phi_s(i))_{i=1}^n$, such that
 \begin{equation}
 \sup_{v_s, p_s, p_T} \inf_{\Phi_s} \quad \mathcal{L}(v_s, p_s, p_T, \Phi_s). 
 \end{equation}
Here 
 \begin{equation*}
 \begin{split}
& \mathcal{L}(v_s, p_s, p_T, \Phi_s)\\=
&\sG(p_T) -\int_t^T \bbs{\frac12 \sum_{(i,j)\in E}L_{ij}(v_s(ij))\theta_{ij}(p_s) - \sF(p_s) } \ud s-\int_t^T\sum_{i=1}^n \Phi_s(i)[\frac{dp_s(i)}{ds} + \sum_{j\in \mathcal{N}_i}  \sqrt{\omega_{ij}} v_s(ij)\theta_{ij}(p_s)]ds\\
=&\sG(p_T)-\sum_{i=1}^n[\Phi_T(i)p_T(i)-\Phi_i(t)p_t(i)]+\int_t^T\sum_{i=1}^np_s(i) \frac{d}{ds}\Phi_s(i)ds\\
&\qquad-\int_t^T \bbs{\frac12 \sum_{(i,j)\in E}L_{ij}(v_s(ij))\theta_{ij}(p_s)- \frac12 \sum_{(i,j)\in E} \sqrt{\omega_{ij}}(\Phi_s(j)-\Phi_s(i))v_s(ij)\theta_{ij}(p_s)- \sF(p_s) } \ud s,
\end{split}
 \end{equation*} 
 where we have used the fact that 
 $$
 \sum_{(i,j)\in E}= \sum_{i=1}^n \sum_{j\in \mathcal{N}_i}.
 $$
 Then the Euler-Lagrange equations are derived as
 \begin{equation*}
 \begin{aligned}
 \frac{\pt}{\pt v_s(ij)}\mathcal{L}=0,\quad (i,j)\in E; \quad 
\frac{\pt}{\pt p_s(i)}\mathcal{L}=0,\quad 
 \frac{\pt}{\pt \Phi_s(i)}\mathcal{L}=0,\quad
 \frac{\pt}{\pt p_T(i)}\mathcal{L}=0, \quad i\in V.
\end{aligned} 
 \end{equation*} 
This implies that  
 \begin{equation*}
\left\{\begin{aligned}
&\Big(L'_{ij}(v_{ij})-\sqrt{\omega_{ij}}(\Phi_j-\Phi_i)\Big)\theta_{ij}(p_s)=0, \quad (i,j)\in E\\
&\frac{d}{ds}\Phi_s(i)- \sum_{j\in\mathcal{N}_i}\Big[L_{ij}(v_s(ij))-\sqrt{\omega_{ij}}(\Phi_s(j)-\Phi_s(i))v_s(ij)\Big]\frac{\partial}{\partial p_i}\theta_{ij}(p_s)+\frac{\partial}{\partial p_i}\mathcal{F}(p_s)=0,\\
&\frac{d}{ds}p_s(i)+\sum_{j\in\mathcal{N}_i}\sqrt{\omega_{ij}}v_s(ij)\theta_{ij}(p_s )=0,\\
&\Phi_T(i)=\frac{\partial}{\partial p_T(i)}\mathcal{G}(p_T). 
\end{aligned}\right.
 \end{equation*} 
 Based on the assumptions that $p_s>0$ and thus $\theta_{ij}(p_s)>0$, the first equation becomes
 \begin{equation}
     L'_{ij}(v_{ij})-\sqrt{\omega_{ij}}(\Phi_j-\Phi_i)=0, \quad (i,j)\in E.
 \end{equation}
By the definition of convex conjugate functions, we have
\begin{equation}
H_{ij}(\sqrt{\omega_{ij}}(\Phi_j-\Phi_i)) = \sup_{v_{ij}\in\mathbb{R}^1} \bbs{v_{ij} \cdot \sqrt{\omega_{ij}}(\Phi_j-\Phi_i) - L_{ij}(v_{ij})}, \quad\, v_{ij}\text{ solves } L'_{ij}(v_{ij}) = \sqrt{\omega_{ij}}( \Phi_j-\Phi_i).
\end{equation}
Here $'$ denotes the derivative and $v_{ij} = H_{ij}'(\sqrt{\omega_{ij}}( \Phi_j-\Phi_i))$ due to the derivatives are inverse function of each other $H'=(L')^{-1}$. Hence we derive the equation:
\begin{equation}\label{MFG_D_tra}
\left\{\begin{aligned}
&\frac{dp_s(i)}{ds} 
+ \sum_{j\in \mathcal{N}_i}  \sqrt{\omega_{ij}}\theta_{ij}(p_s) H'_{ij}(\sqrt{\omega_{ij}}(\Phi_s(j)-\Phi_s(i))) = 0, \quad t\leq s\leq T,\\
&\frac{d\Phi_s(i)}{ds} 
+  \sum_{j\in \mathcal{N}_i}  H_{ij}(\sqrt{\omega_{ij}}(\Phi_s(j)-\Phi_s(i)))\frac{\partial \theta_{ij}(p_s)}{\partial p_i}+  \frac{\partial}{\partial p_i}\mathcal{F}(p_s)  =0, \quad t\leq s\leq T,\\
& p_t(i)=p_i, \quad \Phi_T(i)  =  \frac{\partial}{\partial p_T(i)}\mathcal{G}(p_T). 
\end{aligned}\right.
\end{equation}
\end{proof}

\begin{rem}\label{rem312}
We also compare our system to the MFG on graph associated with the commonly used linear continuity equation on graph, which was first studied in \cite{Gomes2013}; see also \cite{FD,  Erhan} for the convergence analysis of the mean field limit  from N-player game and see \cite{gao_liu_tse} for the same controlled dynamics  for general jump processes. Starting from \eqref{a}, one can consider a linear continuity equation with $\alpha_t(ij)$ being the controlled $Q$-matrix
\begin{equation}
    \frac{\ud p_s(i)}{\ud s} + \sum_j( \alpha_t(ji) p_t(j) - \alpha_t(ij)p_t(i))=0.
\end{equation}
Here $\alpha_{ij}\geq 0$ for $j\neq i$ and $\sum_j \alpha_{ij}=0.$
Then with the same notations for running cost and terminal cost, we consider
\begin{equation}\label{gomes1}
 \begin{aligned}
 \sU(p,t) := &\sup_{\alpha_s,p_s} \left\{\sG(p_T) - \int_t^T  \bbs{ \frac12 \sum_{i,j=1}^n  p_s(i)  (\alpha_s(ij)-c)^2- \sF(p_s) }   \ud s \right\} ,\\
 &\st \frac{\ud p_s(i)}{\ud s} + \sum_j ( \alpha_t(ji) p_t(j) - \alpha_{ij}p_t(i))=0,  \quad t \leq s \leq T, \quad p_t(i) = p_i.
 \end{aligned}
 \end{equation}
 Then by the same procedures in the above proposition, we have the Euler-Lagrangian equations
 \begin{align*}
  &\frac{\ud p_s(i)}{\ud s} + \sum_j ( \alpha_t(ji) p_t(j) - \alpha_t(ij)p_t(i))=0,  \quad t \leq s \leq T, \text{ with } \alpha_{ij} = \Phi_i-\Phi_j +c \\
  &\frac{\ud \Phi_s(i)}{\ud s} + \sum_j H_{ij}(\Phi_i-\Phi_j) + \frac{\pt}{\pt p_i} \sF(p_s) =0, \quad t\leq s\leq T,\\
    & p_t(i) = p_i, \quad \Phi_T(i) = \frac{\pt}{\pt p_i} \sG(p_T). 
 \end{align*}
 Here $H_{ij}(\beta) = \frac{\beta^2}{2}+c\beta$ is the convex conjugate of the Lagrangian $L_{ij}(\alpha)=\frac{(\alpha-c)^2}{2}$. Notice $\alpha_{ij}\geq 0$ for $j\neq i$ can be ensured by choosing the constant $c$ sufficiently large \cite{FD}. This coupled system is the mean field Nash equilibria on the graph that was first proposed in \cite{Gomes2013}.
 However, we point out that the original graph structure and the transition rate for the original jump process on that graph  can not be maintained in the control $\alpha$. These lost structures could be recovered in the form of running costs, but this procedure is rather complicated. Instead, our MFG formulation on a graph using the original $Q$-matrix and its invariant measure as an edge weight $\omega_{ij}=Q_{ij}\pi_i$.
\end{rem}

Again, we follow the dynamic programming principle to derive functional HJE for value function $\sU(p,t)$ in $\mathcal{P}(\mathcal{X})$. 
Define a functional $\sH:   \sP \times \mathbb{R}^n\to \bR$ as
\begin{equation}
\sH(p, \Phi):=  \frac12 \sum_{(i,j)\in E}H_{ij}((\dnabla\Phi)_{i,j})\theta_{ij}(p) + \sF(p).
\end{equation}
\begin{prop}[Hamilton-Jacobi equations on $\mathcal{P}(\mathcal{X})$]\label{Dprop1}
The potential mean field game system \eqref{MFG_discrete} is a Hamiltonian system in $\sP(\sX)$   
\begin{equation}\label{MFG_discrete_H0}
\frac{d}{ds}p_s(i) =\frac{\partial}{\partial\Phi_i}\sH(p, \Phi), \quad \frac{d}{ds}\Phi_s(i) =-\frac{\partial}{\partial p_i}\sH(p, \Phi), \quad t\leq s \leq T,\quad p_t(i)=p_i, \quad \Phi_T(i)  =  \frac{\partial}{\partial p_T(i)}\mathcal{G}(p_T).
\end{equation}
Assume there exists a  classical  solution to the HJE in $\sP(\mathcal{X})$
  \begin{equation}\label{HJErhoD}
  \frac{\partial}{\partial t} \sU(p, t) + \sH( p, \nabla_p\sU(p, t) ) = 0, \,\,\, t\leq T, \quad \sU(p, T) = \sG(p ).
  \end{equation}
  Then the  value function   in the mean field game \eqref{optimalD_mfg} equals   $\sU(p, t)$ and \eqref{MFG_discrete_H0} is the bi-characteristics for HJE \eqref{HJErhoD}. 
\end{prop}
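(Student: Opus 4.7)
My plan is to mimic the continuous-state argument of Proposition~\ref{prop1} and Corollary~\ref{corHJEs} in the discrete setting. The proof splits into two parts: (a) verifying by direct computation that \eqref{MFG_discrete} is literally the Hamiltonian system \eqref{MFG_discrete_H0} for $\sH(p,\Phi)$; (b) a verification argument that a classical solution $\sU$ of \eqref{HJErhoD} coincides with the value function in \eqref{optimalD_mfg}, and that the bi-characteristics of the HJE recover \eqref{MFG_discrete_H0}.

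For (a), in $\pt\sH/\pt\Phi_i$ only edges with $i\in\{k,l\}$ contribute. Using the symmetries $\omega_{ij}=\omega_{ji}$, $\theta_{ij}=\theta_{ji}$, $H_{ij}=H_{ji}$, the antisymmetry $(\dnabla\Phi)_{i,j}=-(\dnabla\Phi)_{j,i}$, and the oddness of $H'_{ij}$ (which follows from $L_{ij}(-a)=L_{ij}(a)$ in \eqref{L_asm}), the two index-swapped contributions combine and the $\tfrac12$ factor in $\sH$ cancels the double count, yielding
\begin{equation*}
\frac{\pt\sH}{\pt\Phi_i}(p,\Phi)=-\sum_{j\in\mathcal{N}_i}\sqrt{\omega_{ij}}\,\theta_{ij}(p)\,H'_{ij}((\dnabla\Phi)_{i,j}),
\end{equation*}
which matches the right-hand side of the first equation of \eqref{MFG_discrete}. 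A parallel computation for $\pt\sH/\pt p_i$ uses the evenness of $H_{ij}$ to combine the $k=i$ and $l=i$ contributions to $\pt\theta_{kl}/\pt p_i$, and adding $\pt\sF/\pt p_i$ reproduces the second equation.

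For (b), fix an admissible $(p_s,v_s)$ in \eqref{optimalD_mfg} with $p_t=p$, and differentiate $\sU(p_s,s)$ along the curve. Substituting the continuity equation and performing a discrete summation by parts based on $v_s(ij)=-v_s(ji)$ gives
\begin{equation*}
\frac{d}{ds}\sU(p_s,s)=\pt_s\sU(p_s,s)+\tfrac12\sum_{(i,j)\in E}(\dnabla\pt_p\sU(p_s,s))_{i,j}\,v_s(ij)\,\theta_{ij}(p_s).
\end{equation*}
Young's inequality $ab\leq L_{ij}(a)+H_{ij}(b)$ applied edge-by-edge, combined with the HJE \eqref{HJErhoD} which kills $\pt_s\sU+\sH$, yields
\begin{equation*}
\frac{d}{ds}\sU(p_s,s)\leq -\sF(p_s)+\tfrac12\sum_{(i,j)\in E}L_{ij}(v_s(ij))\,\theta_{ij}(p_s),
\end{equation*}
with equality iff $v_s(ij)=H'_{ij}((\dnabla\pt_p\sU(p_s,s))_{i,j})$. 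Integrating from $t$ to $T$, using $\sU(p,T)=\sG(p)$, and rearranging shows the payoff is bounded above by $\sU(p,t)$; the feedback control above saturates the bound by solving the resulting ODE for $p_s$. Hence $\sU(p,t)$ equals the value function.

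Finally, to recover the bi-characteristics, along the optimal trajectory set $\Phi_s(i):=\pt_{p_i}\sU(p_s,s)$, so $\dot p_s(i)=\pt_{\Phi_i}\sH(p_s,\Phi_s)$ holds by construction and part (a). Differentiating the HJE in $p_i$ and applying the chain rule to $\pt_{p_i}[\sH(p,\pt_p\sU(p,s))]$ produces a Hessian term $\sum_k \pt_{\Phi_k}\sH\,\pt_{p_i}\pt_{p_k}\sU$; using the symmetry of mixed partials $\pt_{p_i}\pt_{p_k}\sU=\pt_{p_k}\pt_{p_i}\sU$ and $\dot p_s(k)=\pt_{\Phi_k}\sH$, this term cancels against the corresponding contribution in $\frac{d}{ds}\Phi_s(i)$, leaving $\dot\Phi_s(i)=-\pt_{p_i}\sH(p_s,\Phi_s)$. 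Combined with (a), this is the second equation of \eqref{MFG_discrete}, and the terminal condition follows from $\sU(p,T)=\sG(p)$. The principal technical obstacle is the bookkeeping of the discrete gradient and the interplay of the symmetries of $\omega$, $\theta$, and $L_{ij}$ with the antisymmetry of $v_{ij}$, so that the $\tfrac12$ in $\sH$ exactly absorbs the double count and each per-vertex equation emerges with the correct sign.
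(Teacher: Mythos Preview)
Your proposal is correct and follows essentially the same approach as the paper: direct computation of $\partial_{\Phi_i}\sH$ and $\partial_{p_i}\sH$ to identify \eqref{MFG_discrete} with \eqref{MFG_discrete_H0}, followed by a verification argument based on Young's inequality and the HJE \eqref{HJErhoD} with the feedback $v_s(ij)=H'_{ij}((\dnabla\nabla_p\sU(p_s,s))_{i,j})$. The only minor addition is that you spell out the bi-characteristics claim (your final paragraph, differentiating the HJE in $p_i$) more explicitly than the paper, which leaves that step implicit after noting the optimal feedback $\Phi_s(i)=\partial_{p_i}\sU(p_s,s)$.
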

The HJE \eqref{HJErhoD} reads as
  \begin{equation*}
  \frac{\partial}{\partial t} \sU(p, t) + \frac12 \sum_{(i,j)\in E}H_{ij}\Big((\dnabla \nabla_p\sU(p, t))_{i,j}\Big)\theta_{ij}(p)+\sF(p) = 0, \,\,\, t\leq T, \quad \sU(p , T) = \sG(p ).
  \end{equation*}
Here the notation means 
\begin{equation*}
 (\dnabla \nabla_p\sU(p, t))_{i,j}:=\sqrt{\omega_{ij}}\Big(\frac{\partial}{\partial p_j}\sU(p,t)-\frac{\partial}{\partial p_i}\sU(p,t)\Big).    
\end{equation*}
This  $\dnabla \nabla_p$ is also the Wasserstein gradient $\nabla_{W}$ in the discrete Wasserstein space, as used in \cite{Gangbo2, GMS1}.
\begin{proof}
We directly compute   
\begin{equation}\label{a1}
\frac{\partial}{\partial\Phi_i}\sH(p, \Phi)=-\sum_{j\in \mathcal{N}_i} \sqrt{\omega_{ij}}H'_{ij}((\dnabla\Phi)_{i,j})\theta_{ij}(p),
\end{equation}
and 
\begin{equation}\label{a2}
\frac{\partial}{\partial p_i}\sH(p, \Phi)=  \sum_{j\in \mathcal{N}_i}H_{ij}((\dnabla\Phi)_{i,j})\frac{\partial\theta_{ij}(p)}{\partial p_i}+\frac{\partial}{\partial p_i}\mathcal{F}(p). 
\end{equation}
Hence equation \eqref{MFG_discrete} is a Hamiltonian system. 

Denote the RHS of \eqref{optimalD_mfg} as $J(p,t)$. First, we prove   $J(p,t)\leq \sU(p,t)$ for any $C^1$ curve in $\sP( \sX)$, where $\sU(p,t)$ is the solution to \eqref{HJErhoD}.  Consider any $C^1$ curve $p_s$, $t\le s \le T$, satisfying 
\begin{equation}\label{CE_d}
 \frac{d}{ds}p_s(i)+\sum_{j\in \mathcal{N}_i}\sqrt{\omega_{ij}}v_s(ij)\theta_{ij}(p_s)=0, \quad t \le s \le T,
\end{equation} 
  with any velocity $v_s(ij)$ for $t\leq s\leq  T$. Then for any function $p_s(ij)$ with $p_s(ij)=-p_s(ji)$ ,
   the running  cost is
   \begin{equation}\label{ttD}
   \begin{aligned}
   &  \int_t^T  \bbs{ \frac12 \sum_{(i,j)\in E} L_{ij}(v_s(ij)) \theta_{ij}(p_s) - \sF(p_s) }   \ud s\\
   \geq &  \int_t^T  \bbs{  \frac12 [\sum_{(i,j)\in E}  v_s(ij)\cdot p_s(ij) - H_{ij}(p_s(ij))] \theta_{ij}(p_s) - \sF(p_s) }   \ud s.
   \end{aligned}
\end{equation}      
 Taking the feedback control as $p_s(ij)=(\dnabla \partial_p\mathcal{U}_s)_{i,j}$, we have
 \begin{equation}\label{D_ll}
 \begin{aligned} 
 &\int_t^T  \bbs{  [\sum_{(i,j)\in E}  \frac12 v_s(ij)\cdot (\dnabla \nabla_p\mathcal{U}_s)_{i,j} - \frac 12 H_{ij}(p_s(ij))] \theta_{ij}(p_s) - \sF(p_s) }   \ud s\\
 =& \int_t^T  \bbs{  \sum_{(i,j)\in E}  \frac12 v_s(ij)\cdot (\dnabla \nabla_p\mathcal{U}_s)_{i,j} \theta_{ij}(p_s) + \pt_s \sU - \pt_s \sU - \sum_{(i,j)\in E}\frac 12 H_{ij}(p_s(ij)) \theta_{ij}(p_s) - \sF(p_s) }   \ud s  \\
 =& \int_t^T  \bbs{  \sum_{(i,j)\in E}  \frac12 v_s(ij)\cdot (\dnabla \nabla_p\mathcal{U}_s)_{i,j} \theta_{ij}(p_s) + \pt_s \sU   }   \ud s, 
 \end{aligned}
  \end{equation}
  where we used the equation \eqref{HJErhoD} for $\sU.$
 Notice \eqref{CE_d} and $\theta_{ij}v_{ij}$ is antisymmetric. Then
 $$\sum_{i=1}^n \pt_{p_i} \sU_s \cdot \dot{p}_i = - \sum_{i=1}^n\sum_{j\in \mathcal{N}_i}\pt_{p_i} \sU_s \cdot \sqrt{\omega_{ij}}v_s(ij) \theta_{ji}(p_s)  =  \sum_{(i,j)\in E}  \frac12 v_s(ij)\cdot (\dnabla \nabla_p\mathcal{U}_s)_{i,j} \theta_{ij}(p_s).   $$
 Thus the RHS of \eqref{D_ll} becomes 
 \begin{equation}
 \int_t^T  \frac{\ud}{\ud s}\sU_s (p_s)     \ud s = \sU(p_T,T) - \sU(p,t).
 \end{equation}
 Rearranging terms and taking supermum, 
 this implies that the solution $\sU(p, t)$ to \eqref{HJErhoD} always satisfies
 \begin{equation}
 \sU(p, t) \geq J(p,t).
 \end{equation}
 
 Second, we prove the optimal curve  in \eqref{optimalD_mfg} is achieved at
 \begin{equation}\label{matherD}
 v_s(ij)=  H'_{ij}((\dnabla \nabla_p\mathcal{U}_s)_{i,j}).
 \end{equation}
 And the optimal feedback control is $\Phi_s(i)= \pt_{p_i} \sU(p_s(i), s)$. Indeed, along   \eqref{matherD}
    the equality in \eqref{ttD} is achieved and the same argument gives that maximum profit equals  $\sU(p,t).$ 
 \end{proof}
We last derive master equations for potential mean field games in $\sP(\mathcal{X})$. Define a vector functional $u(p,t)=(u_i(p,t))\colon  \mathcal{P}(\mathcal{X})\times[0, T]\rightarrow\mathbb{R}^n$ by $u(p,t)=\nabla_p\mathcal{U}(p,t)$, i.e., 
\begin{equation}\label{D_U_u1}
u_i(p,t) =\frac{\partial}{\partial p_i}\sU(p, t). 
\end{equation}
\begin{prop}[Master equations for potential mean field games in $\sP(\mathcal{X})$]\label{newprop1}
The vector functional $u(p,t)$ defined in \eqref{D_U_u1} satisfies the following  master equation 
  \begin{equation}\label{discrete_master_1}
  \frac{\partial}{\partial t} u_i(p, t) + D_{p_i}\sH( p, u(p, t) ) = 0, \,\,\, t\leq T, \quad u(p , T) = \nabla_{p}\sG(p).
  \end{equation}
In detail, 
  \begin{equation*}
  \begin{split}
&\quad\frac{\partial}{\partial t} u_i(p, t) + \sum_{j\in\mathcal{N}_i}H_{ij}((\dnabla u(p,t))_{i,j})\frac{\partial}{\partial p_i}\theta_{ij}(p) +\frac{\partial}{\partial p_i}\sF(p)\\
&\quad +\frac{1}{2}\sum_{(j,k)\in E}  H'_{jk}((\dnabla u(p,t))_{j,k})(\dnabla\nabla_p u_i(p,t))_{j,k}\theta_{jk}(p) = 0, \,\,\, t\leq T, 
\end{split}
  \end{equation*}
with 
\begin{equation*}
u_i(p , T) = \frac{\partial}{\partial p_i}\sG(p).
\end{equation*}
Here, the notations read 
\begin{equation*}
    (\dnabla u(p,t))_{i,j}=\sqrt{\omega_{ji}}(u_j(p,t)-u_i(p,t)), \quad 
    (\dnabla\nabla_p u_i(p,t))_{j,k}=\sqrt{\omega_{kj}}\Big(\frac{\partial}{\partial p_k}u_i(p,t)-\frac{\partial}{\partial p_j}u_i(p,t)\Big).
\end{equation*}

  \end{prop}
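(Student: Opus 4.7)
The plan is to differentiate the functional Hamilton--Jacobi equation \eqref{HJErhoD} componentwise with respect to $p_i$ and carefully apply the chain rule, exploiting the symmetries built into $\sH$. Since by definition $u_i(p,t)=\pt_{p_i}\sU(p,t)$, commuting $\pt_t$ and $\pt_{p_i}$ yields
\begin{equation*}
\pt_t u_i(p,t) = -\pt_{p_i}\bigl[\sH(p,\nabla_p\sU(p,t))\bigr].
\end{equation*}
The right-hand side splits as
\begin{equation*}
\pt_{p_i}\bigl[\sH(p,u(p,t))\bigr] = \pt_{p_i}\sH(p,\Phi)\Big|_{\Phi=u(p,t)} + \sum_{l=1}^{n}\pt_{\Phi_l}\sH(p,\Phi)\Big|_{\Phi=u(p,t)}\pt_{p_i}u_l(p,t),
\end{equation*}
so that the task reduces to identifying each of the two pieces with the corresponding terms in the claimed master equation.

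For the first piece I would invoke formula \eqref{a2} directly, obtaining $\sum_{j\in\mathcal{N}_i}H_{ij}((\dnabla u)_{i,j})\pt_{p_i}\theta_{ij}(p)+\pt_{p_i}\sF(p)$, which produces precisely the explicit-$p_i$ terms of \eqref{discrete_master_1}. The terminal condition $u_i(p,T)=\pt_{p_i}\sG(p)$ is immediate from $\sU(p,T)=\sG(p)$.

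For the second piece I would substitute \eqref{a1}, giving
\begin{equation*}
\sum_{l=1}^{n}\pt_{\Phi_l}\sH(p,u)\pt_{p_i}u_l = -\sum_{l=1}^n\sum_{j\in\mathcal{N}_l}\sqrt{\omega_{lj}}H'_{lj}((\dnabla u)_{l,j})\theta_{lj}(p)\,\pt_{p_l}u_i(p,t),
\end{equation*}
where I used the symmetry of mixed partials $\pt_{p_i}u_l = \pt^2_{p_ip_l}\sU = \pt_{p_l}u_i$. The key step is then to symmetrize this sum over ordered edges. Renaming indices $(l,j)\to(j,k)$ and swapping $j\leftrightarrow k$, and using $H'_{jk}(-a)=-H'_{jk}(a)$ (which follows from the evenness of $H_{jk}$, itself a consequence of $L_{jk}(-a)=L_{jk}(a)$ in \eqref{L_asm}) together with $(\dnabla u)_{k,j}=-(\dnabla u)_{j,k}$ and the symmetry of $\theta_{jk}$, $\omega_{jk}$, the same quantity can be rewritten with $\pt_{p_k}u_i$ in place of $\pt_{p_j}u_i$. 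Averaging the two expressions produces the antisymmetric combination $\pt_{p_k}u_i-\pt_{p_j}u_i=(\dnabla\nabla_p u_i)_{j,k}/\sqrt{\omega_{jk}}$, which absorbs one factor of $\sqrt{\omega_{jk}}$ and yields the desired $\tfrac12\sum_{(j,k)\in E}H'_{jk}((\dnabla u)_{j,k})(\dnabla\nabla_p u_i)_{j,k}\theta_{jk}(p)$.

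The main obstacle is the bookkeeping in this last symmetrization: keeping careful track of the antisymmetric/symmetric structure of $\dnabla$, $\theta$, $\omega$, and $H'$ in order to combine an asymmetric sum over ordered pairs $(l,j)$ into a symmetric sum over unordered edges $(j,k)\in E$ while generating the discrete gradient $\dnabla$ acting on the new argument $\nabla_p u_i$. Once this identification is made, assembling the two pieces with a sign gives exactly \eqref{discrete_master_1}, completing the proof.
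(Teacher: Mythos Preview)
Your proposal is correct and follows essentially the same route as the paper: differentiate the functional HJE \eqref{HJErhoD} in $p_i$, split via the chain rule into the explicit-$p$ part handled by \eqref{a2} and the $\Phi$-part handled by \eqref{a1}, use $\pt_{p_i}u_l=\pt_{p_l}u_i$, and then antisymmetrize the edge sum (the paper performs this last step slightly more tersely, but the computation is identical). Your explicit remark that the evenness of $H_{jk}$---inherited from $L_{jk}(-a)=L_{jk}(a)$---is what makes the symmetrization work is a point the paper uses without stating.
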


\begin{proof}
We compute $\pt_{p_i}\sH(p, u(p,t))$ directly. From \eqref{a1} and \eqref{a2}, we have 
\begin{equation*}
\begin{split}
&\pt_{p_i}\sH(p, u(p,t))\\
=&\frac{\partial}{\partial p_i}\sH(p, u)+\sum_{k=1}^n\frac{\partial}{\partial u_k}\sH(p,u)\frac{\partial}{\partial p_i}u_k(p,t)\\
=&\sum_{j\in \mathcal{N}_i}H_{ij}((\dnabla u)_{i,j})\frac{\partial\theta_{ij}}{\partial p_i}+\frac{\partial}{\partial p_i}\mathcal{F}(p)
+\sum_{k=1}^n [\sum_{j\in \mathcal{N}_k} H'_{jk}(\sqrt{\omega_{jk}}(u_k-u_j))\sqrt{\omega_{jk}}\theta_{jk}(p)]\frac{\partial}{\partial p_i}u_k(p,t)\\
=&\sum_{j\in\mathcal{N}_i}H_{ij}((\dnabla u)_{i,j})\frac{\partial\theta_{ij}}{\partial p_i}+\frac{\partial}{\partial p_i}\mathcal{F}(p)
+\frac{1}{2}\sum_{(j, k)\in E} H'_{jk}((\dnabla u)_{j,k})\theta_{jk}(p)\sqrt{\omega_{jk}}\Big(\frac{\partial}{\partial p_i}u_k(p,t)-\frac{\partial}{\partial p_i}u_j(p,t)\Big).
\end{split}
\end{equation*}
 We also notice $\frac{\pt}{\pt p_i} u_k = \frac{\pt}{\pt p_k} u_i$ due to \eqref{D_U_u1}. Thus,  this finishes the proof.
\end{proof}

\section{General mean field games and master equations on finite states}\label{sec4}
In this section, we discuss general mean field games on finite states. We first present the mean field game system and derive its master equation. We next formulate the mixed strategy master equation on a finite state space. A connection between mixed and classical master equations on finite states is also provided. 

\subsection{Mean field game systems on finite states}
In this subsection, we generalize the mean field game systems  \eqref{MFG_discrete} for the population state density $p_s$ and the population policy function $\Phi_s$ from potential games to general mean field games. Define a $C^1$ vector potential function $F\colon \mathcal{P}(\mathcal{X})\rightarrow \mathbb{R}^n$, such that $F(p)=(F_i(p))_{i=1}^n$. Denote a $C^1$ vector terminal function $G\colon \mathcal{P}(\mathcal{X})\rightarrow\mathbb{R}^n$, such that $G(p)=(G_i(p))_{i=1}^n$. 
\begin{defn}[General discrete MFG system]\label{GMFG}
For the general mean field games, the MFG system on finite states, a.k.a.  the Nash equilibrium, is given by  $(p_s, \Phi_s)$, $t\leq s\leq T$ satisfying
\begin{equation}\label{MFG_discrete_general}
\left\{\begin{aligned}
&\frac{dp_s(i)}{ds} + \sum_{j\in {\mathcal{N}_i}} \sqrt{\omega_{ij}}\theta_{ij}(p_s) H'_{ij}((\dnabla\Phi_s)_{i,j}) = 0, \quad t\leq s\leq T,\\
&\frac{d\Phi_s(i)}{ds}  +  \sum_{j\in {\mathcal{N}_i}} H_{ij}((\dnabla\Phi_s)_{i,j})\frac{\partial \theta_{ij}(p_s)}{\partial p_i}+  F_i(p_s)  =0, \quad t\leq s\leq T,\\
& p_t(i)=p_i, \quad \Phi_T(i)  = G_i(p_T). 
\end{aligned}\right.
\end{equation}
\end{defn}
{\blue
We remark that the solution to \eqref{MFG_discrete_general} is unique if one further assume the Lasry-
Lions monotonicity condition   for $F$ and $G$, i.e.,
\begin{align*}
\sum_i (F_i(p)-F_i(q))(p_i-q_i) \leq 0, \quad \sum_i (G_i(p)-G_i(q))(p_i-q_i) \leq 0.
\end{align*}
Notice in the potential game case, the concavity of $\sF$ and $\sG$ automatically implies this monotone condition.
} 

Next, we show that equation system \eqref{MFG_discrete_general} generalizes the one in potential mean field games. 
\begin{prop}
Suppose that $F$ and $G$ are gradient vector functions. In other words, there exists functionals $\mathcal{F}$, $\mathcal{G}$, such that 
\begin{equation*}
    F_i(p) = \frac{\partial}{\partial p_i} \sF(p), \quad G_i(p) = \frac{\partial}{\partial p_i}\sG(p).
\end{equation*}
Then equation system \eqref{MFG_discrete_general} satisfies the critical point of variational problem \eqref{optimalD_mfg}. 
\end{prop}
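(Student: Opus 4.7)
The plan is to reduce this proposition to the Euler-Lagrange derivation already carried out for the potential MFG system \eqref{MFG_discrete}. The two systems \eqref{MFG_discrete_general} and \eqref{MFG_discrete} share the same forward continuity equation for $p_s$ and the same initial condition $p_t(i) = p_i$; they differ only in the pointwise source term of the backward equation for $\Phi_s$ (namely $F_i(p_s)$ versus $\partial_{p_i}\mathcal{F}(p_s)$) and in the terminal condition (namely $G_i(p_T)$ versus $\partial_{p_i}\mathcal{G}(p_T)$). Under the gradient assumption on $F$ and $G$, these two pairs coincide verbatim.

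Concretely, I would substitute $F_i(p) = \partial_{p_i}\mathcal{F}(p)$ into the second equation of \eqref{MFG_discrete_general} to obtain
\begin{equation*}
\frac{d\Phi_s(i)}{ds}  +  \sum_{j\in {\mathcal{N}_i}} H_{ij}((\dnabla\Phi_s)_{i,j})\frac{\partial \theta_{ij}(p_s)}{\partial p_i}+  \frac{\partial}{\partial p_i}\mathcal{F}(p_s)  =0,
\end{equation*}
which is exactly the backward equation of the potential system \eqref{MFG_discrete}. Likewise, the terminal condition $\Phi_T(i) = G_i(p_T) = \partial_{p_T(i)}\mathcal{G}(p_T)$ matches \eqref{MFG_discrete}. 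Since the forward equation and $p_t(i) = p_i$ already agree, the full systems are identical under the gradient assumption.

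It then remains to invoke the Euler-Lagrange derivation established just before the statement, where introducing the Lagrange multiplier $\Phi_s$ for the continuity equation constraint and imposing the critical point conditions $\partial_{v_s(ij)}\mathcal{L} = 0$, $\partial_{p_s(i)}\mathcal{L} = 0$, $\partial_{\Phi_s(i)}\mathcal{L} = 0$, $\partial_{p_T(i)}\mathcal{L} = 0$ yields precisely \eqref{MFG_discrete}. Combining this with the identification above shows that \eqref{MFG_discrete_general} characterizes the critical points of \eqref{optimalD_mfg}, and the optimal feedback velocity $v_s(ij) = H'_{ij}((\dnabla\Phi_s)_{i,j})$ is recovered as before.

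There is essentially no analytic obstacle, since this is a consistency/specialization statement rather than a new derivation; the only points meriting care are the sign conventions between the maximization formulation and the Euler-Lagrange equations, and making sure that the antisymmetry $v_s(ij) = -v_s(ji)$ together with the symmetry $\theta_{ij} = \theta_{ji}$ and $L_{ij} = L_{ji}$ is used consistently so that no spurious double-counting factor enters when matching the two systems.
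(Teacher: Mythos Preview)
Your proposal is correct and takes essentially the same approach as the paper: under the gradient assumption, substitute $F_i=\partial_{p_i}\mathcal{F}$ and $G_i=\partial_{p_i}\mathcal{G}$ so that \eqref{MFG_discrete_general} becomes \eqref{MFG_discrete}, which is already known to be the Euler--Lagrange system for \eqref{optimalD_mfg}. The paper's proof is a one-line version of exactly this observation.
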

\begin{proof}
The proof follows from Definition \ref{GMFG}. If $F$, $G$ are gradient vector functions, then equation \eqref{MFG_discrete_general} forms the Euler-Lagrange equation \eqref{MFG_discrete}. This finishes the proof. 
\end{proof}
We remark equation \eqref{MFG_discrete_general} is more general than \eqref{MFG_discrete}. For example, consider $F(p)=W p$, $W\in \mathbb{R}^{n\times n}$. If $W\neq W^{\ts}$, then $F$ is not a gradient vector function. In this case, equation \eqref{MFG_discrete} is not an Euler-Lagrange equation from potential games. 

\subsection{Master equations for general mean field games in $\sP(\mathcal{X})$}
In this subsection, we propose a master equation for general mean field games in $\sP(\mathcal{X})$. 
This describes the optimal strategies for both individual players   and population.

Given $F_i(p)$ and $G_i(p)$ for $i\in \sX$, we define a  vector value function 
$$u \colon  \mathcal{P}(\mathcal{X}) \times \bR^+\rightarrow \mathbb{R}^n, \quad (p, t) \, \mapsto u(p,t)=(u_i(p,t))_{i=1}^n,$$  such that $(u_i(t,p))_{i=1}^n$ satisfies
\begin{equation}\label{gen_master_dis_u}
  \begin{split}
&\quad\frac{\partial}{\partial t} u_i(p, t) +   \sum_{j\in\mathcal{N}_i}H_{ij}((\dnabla u(p,t))_{i,j})\frac{\partial}{\partial p_i}\theta_{ij}(p) +F_i(p) \\
&\quad +\frac{1}{2}\sum_{(j,k)\in E}  H'_{jk}((\dnabla u(p,t))_{j,k})(\dnabla\nabla_p u_i(p,t))_{j,k}\theta_{jk}(p)= 0, \,\,\,   t\leq T, 
\end{split}
  \end{equation}
with boundary condition at $t=T$
\begin{equation*}
u_i(p, T) =G_i(p).
\end{equation*}

The following {lemma} shows that the solution to the master equation along trajectory $\rho_s$ in \eqref{MFG_discrete_general} coincides with $\Phi_s$ in the MFG system   \eqref{MFG_discrete_general} on finite states. 
{   \begin{lem}\label{lem:D_phi_u}
Let $(p_s, \Phi_s)$, $t\leq s\leq T$ be a classical solution to the discrete mean field game system \eqref{MFG_discrete_general}. {\blue Let $u(p,t)$ be a classical solution to  \eqref{gen_master_dis_u}.}
Then we have
\begin{equation}
\Phi_s(i)=u_i(p_s(\cdot), s) =:\tilde{\Phi}_i(s), \quad t\leq s\leq T.
\end{equation}
\end{lem}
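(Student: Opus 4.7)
The plan is to mirror the continuous-state argument of Lemma \ref{lem_phi_u}: define $\tilde{\Phi}_i(s):=u_i(p_s,s)$ and show that $\tilde{\Phi}$ satisfies the same backward ODE in $s$ as $\Phi_s$ with matching terminal data, then invoke ODE uniqueness. Terminal matching is immediate from $\tilde{\Phi}_i(T)=u_i(p_T,T)=G_i(p_T)=\Phi_T(i)$.

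Next, I would compute $\tfrac{d}{ds}\tilde{\Phi}_i(s)=\partial_s u_i(p_s,s)+\sum_k\partial_{p_k}u_i(p_s,s)\dot p_s(k)$, substituting $\partial_s u_i$ from the master equation \eqref{gen_master_dis_u} and $\dot p_s(k)$ from the continuity equation (first line of \eqref{MFG_discrete_general}). The nontrivial step is to recast the ``nonlocal'' double sum in the master equation into a form comparable to the contribution coming from $\dot p_s$. Concretely, using $\omega_{jk}=\omega_{kj}$, $\theta_{jk}=\theta_{kj}$, $H_{jk}=H_{kj}$, and the fact that \eqref{L_asm} makes $L_{ij}$ even, so that $H'_{ij}$ is odd, one swaps the dummy indices $(j,k)\leftrightarrow(k,j)$ and verifies the identity
\begin{equation*}
\tfrac{1}{2}\sum_{(j,k)\in E}H'_{jk}((\dnabla u(p_s))_{j,k})(\dnabla\nabla_p u_i(p_s))_{j,k}\theta_{jk}(p_s)
= -\sum_{(j,k)\in E}H'_{jk}((\dnabla u(p_s))_{j,k})\sqrt{\omega_{jk}}\theta_{jk}(p_s)\partial_{p_j}u_i(p_s).
\end{equation*}
Reindexing $\sum_k\partial_{p_k}u_i\dot p_s(k)$ in the same way (and using $(\dnabla u(p_s))_{i,j}=(\dnabla\tilde{\Phi})_{i,j}$), I arrive at
\begin{equation*}
\tfrac{d}{ds}\tilde{\Phi}_i(s)=-\sum_{j\in\mathcal{N}_i}H_{ij}((\dnabla\tilde{\Phi})_{i,j})\partial_{p_i}\theta_{ij}(p_s)-F_i(p_s)+\sum_{(j,k)\in E}\sqrt{\omega_{jk}}\theta_{jk}(p_s)\partial_{p_j}u_i(p_s)\bigl[H'_{jk}((\dnabla\tilde{\Phi})_{j,k})-H'_{jk}((\dnabla\Phi_s)_{j,k})\bigr].
\end{equation*}

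Subtracting the second equation in \eqref{MFG_discrete_general} from this identity and setting $\delta_i(s):=\tilde{\Phi}_i(s)-\Phi_s(i)$, the difference satisfies a backward ODE system on $[t,T]$ of the form $\dot{\delta}_i(s)=A_i(s,\delta(s))$, with zero terminal data $\delta_i(T)=0$, where $A_i$ vanishes whenever $\delta=0$ and is Lipschitz in $\delta$ on bounded sets (by $C^1$ regularity of $H_{ij}$ and of the classical solution $u$, so that $\partial_{p_j}u_i(p_s,s)$ is uniformly bounded on $[t,T]$). Standard backward ODE uniqueness (Grönwall) then forces $\delta\equiv0$ on $[t,T]$, i.e.\ $\Phi_s(i)=u_i(p_s,s)$ for all $t\leq s\leq T$.

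The principal obstacle is the algebraic bookkeeping needed to rewrite the second-variation double sum in \eqref{gen_master_dis_u} so that it aligns cleanly with the continuity-equation term: this hinges on the symmetries of the graph weights together with the evenness assumption in \eqref{L_asm}. Once this identity is in hand, the argument is actually cleaner than its continuous counterpart in Lemma \ref{lem_phi_u}, because no auxiliary uniqueness hypothesis for a nonlocal HJE is required; the finite-dimensional Cauchy problem for $\delta$ already admits only the trivial solution.
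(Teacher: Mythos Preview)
Your proposal is correct and follows essentially the same route as the paper's proof: define $\tilde{\Phi}_i(s)=u_i(p_s,s)$, compute its $s$-derivative via the chain rule, substitute the master equation and the continuity equation, and reduce the question to uniqueness of a finite-dimensional backward ODE with matching terminal data. Your explicit index-swap identity and the Gr\"onwall formulation for the difference $\delta=\tilde{\Phi}-\Phi$ are slightly more detailed than the paper's presentation, and your remark that no auxiliary nonlocal-HJE uniqueness hypothesis is needed in the discrete case (in contrast to Lemma~\ref{lem_phi_u}) is exactly right---the paper likewise invokes only standard ODE uniqueness here.
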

\begin{proof}
In order to compare $\Phi_s(i)$ and $\tilde{\Phi}_i(s)$ as PDE solutions,  we denote $\Phi_i(s) = \Phi_s(i).$
Taking the time derivative w.r.t. $s$, we have 
\begin{equation*}
\frac{\ud}{\ud s}\tilde{\Phi}_i(s)=\frac{\ud}{\ud s} u_i(p_s, s).
\end{equation*}
Hence 
\begin{equation*}
\begin{split}
&\frac{\ud}{\ud s}\tilde{\Phi}_i(s) -\frac{\ud}{\ud s}\Phi_i(s)\\
=&\partial_s u_i(p_s, s)+\sum_{k=1}^n \frac{\partial}{\partial p_k}u_i(p_s, s)\frac{\ud}{\ud s}p_s(k)
+  \sum_{j\in \mathcal{N}_i}H_{ij}(\sqrt{\omega_{ij}}(\Phi_j(s)-\Phi_i(s)))\frac{\partial\theta_{ij}(p_s)}{\partial p_i}+F_i(p_s)\\
=&\partial_s u_i(p_s, s)
+\sum_{k=1}^n \frac{\partial}{\partial p_k}u_i(p_s , s) 
\Big(-\sum_{j\in\mathcal{N}_k} H'_{kj}(\sqrt{\omega_{kj}}(\Phi_j(s)-\Phi_k(s)))\sqrt{\omega_{kj}}\theta_{kj}(p_s)\Big)\\
&\hspace{2cm}+  H_{ij}(\sqrt{\omega_{ij}}(\Phi_j(s)-\Phi_i(s)))\frac{\partial\theta_{ij}(p_s)}{\partial p_i}+F_i(p_s)\\
=&\partial_s u_i(p_s, s)+\frac{1}{2}\sum_{(j,k)\in E}\sqrt{\omega_{kj}}\Big(\frac{\partial}{\partial p_j}u_i(p_s, s) -\frac{\partial}{\partial p_k}u_i(p_s, s) \Big)H'_{kj}(\sqrt{\omega_{kj}}(\Phi_j(s)-\Phi_k(s)))\theta_{kj}(p_s)\\
&\hspace{2cm}+  \sum_{j\in \mathcal{N}_i}H_{ij}(\sqrt{\omega_{ij}}(\Phi_j(s)-\Phi_i(s)))\frac{\partial\theta_{ij}(p_s)}{\partial p_i}+F_i(p_s). 
\end{split}
\end{equation*}
Then we apply the master equation for $u$ to replace the first term in the last line
\begin{align*}
&\frac{\ud}{\ud s}\tilde{\Phi}_i(s) -\frac{\ud}{\ud s}\Phi_i(s)\\
=& \frac{1}{2}\sum_{(j,k)\in E}\sqrt{\omega_{kj}}\Big(\frac{\partial}{\partial p_j}u_i(p_s, s) -\frac{\partial}{\partial p_k}u_i(p_s, s) \Big)H'_{kj}(\sqrt{\omega_{kj}}(\Phi_j(s)-\Phi_k(s)))\theta_{kj}(p_s)\\
&\hspace{2cm}
+  \sum_{j\in \mathcal{N}_i} H_{ij}(\sqrt{\omega_{ij}}(\Phi_j(s)-\Phi_i(s)))\frac{\partial\theta_{ij}(p_s)}{\partial p_i}+F_i(p_s)\\
& -\frac{1}{2}\sum_{(j,k)\in E}\sqrt{\omega_{jk}}\Big(\frac{\partial}{\partial p_j}u_i(p_s, s) -\frac{\partial}{\partial p_k}u_i(p_s, s) \Big)H'_{kj}(\sqrt{\omega_{jk}}(\tilde{\Phi}_j(s)-\tilde{\Phi}_k(s)))\theta_{kj}(p_s)\\
&\hspace{2cm}-  H_{ij}(\sqrt{\omega_{ij}}(\tilde{\Phi}_j(s)-\tilde{\Phi}_i(s)))\frac{\partial\theta_{ij}(p_s)}{\partial p_i}-F_i(p_s). 
\end{align*}
 We move all the terms involving $\Phi$ to the left hand side and $\tilde{\Phi}$ terms to the right hand side
\begin{align*}
&\frac{\ud}{\ud s}\Phi_i(s)+ \frac{1}{2}\sum_{(j,k)\in E}\sqrt{\omega_{kj}}\Big(\frac{\partial}{\partial p_j}u_i(p_s, s) -\frac{\partial}{\partial p_k}u_i(p_s, s) \Big)H'_{kj}(\sqrt{\omega_{kj}}(\Phi_j(s)-\Phi_k(s)))\theta_{kj}(p_s)\\
&\hspace{2cm}
+  \sum_{j\in \mathcal{N}_i} H_{ij}(\sqrt{\omega_{ij}}(\Phi_j(s)-\Phi_i(s)))\frac{\partial\theta_{ij}(p_s)}{\partial p_i})\\
&
= \frac{\ud}{\ud s}\tilde{\Phi}_i(s)  +\frac{1}{2}\sum_{(j,k)\in E}\sqrt{\omega_{jk}}\Big(\frac{\partial}{\partial p_j}u_i(p_s, s) -\frac{\partial}{\partial p_k}u_i(p_s, s) \Big)H'_{kj}(\sqrt{\omega_{jk}}(\tilde{\Phi}_j(s)-\tilde{\Phi}_k(s)))\theta_{kj}(p_s)\\
&\hspace{2cm}+  H_{ij}(\sqrt{\omega_{ij}}(\tilde{\Phi}_j(s)-\tilde{\Phi}_i(s)))\frac{\partial\theta_{ij}(p_s)}{\partial p_i} =: R_i(s)
\end{align*}
Now we regard above as an ODE  system for the unknowns $\Phi_i(s)$, $ i= 1, \cdots, n$ with  known force term   $R_i(s)$.
At the termal time $T$, we have 
$$
\Phi_i(T) = G_i(p_T)= u_i(p_T,T)= \tilde{\Phi}_i(T)
$$
It is obvious that 
$\Phi_i(s) = \tilde{\Phi}_i(s)$ is one solution to this ODE. 
{\blue From the uniqueness of ODE, we obtain
$$
\Phi_i(s) = \tilde{\Phi}_i(s),  \quad t\le s \le T.
$$
} 
This finishes the derivation. 
  \end{proof}
  {\blue
We remark that we did not use the uniqueness of the master equation \eqref{master_discrete_Ind_1}. However, we refer to \cite[section 3.4.3]{FD} for the  uniqueness of the solution to the master equation on graph for the case that continuity equation is linear, which uses the stability result
for the corresponding forward-backward SDEs.  On the other hand, with the same setup but not necessary separable Hamiltonian, the existence and uniqueness of viscosity solution in Wasserstein space for the functional HJE for \eqref{HJErhoD} was obtained in \cite{GMS1}. If the classical solution to MFG system \eqref{MFG_discrete_general} exists, then the viscosity solution becomes classical solution to  HJE \eqref{HJErhoD}. Particularly, under the assumption in Proposition \ref{prop:convex}, there exists a unique classical solution to potential game \eqref{MFG_discrete}. We also remark that for  potential games in $\bR^d$, the existence and uniqueness of the classical solution to the deterministic master equation   were  established in \cite{Gangbo2}, based on the regularities of the viscosity solution to the functional HJE under certain assumptions.
}
}
\subsubsection{Mixed strategy master equations on finite states}
In this subsection, we introduce a master equation for mixed strategy  on finite states, which describes the optimal strategy selection for both individual state density and population state density.

Each individual player in the population makes their strategy or control represented by velocity field $v_{ij}$ to jump among the finite states  as a controlled stochastic process   in the finite state space with the density $q_s$ at time $s$. The dynamics of $q_s$ is represented as  a discrete continuity equation with activation function $\theta$ and velocity field $v_s$, $\frac{dq_i(s)}{\ud s} +\ddiv(\theta(q_s)v_s) = 0$.
To describe the optimal strategy selection for both individual state density $q$ and population state density  $p$,
denote a value function $U: \sP(\sX) \times \sP(\sX) \times \bR^+ \to \bR$. We introduce a master equation for mixed game.

 Let $u_i(p,s)$ be the solution to discrete master equation \eqref{gen_master_dis_u}. 
 Multiply $q_i$ to discrete master equation \eqref{gen_master_dis_u} and sum up. We have
\begin{equation}\label{tm_lem_u}
  \begin{split}
&\quad  \frac{\partial}{\partial t} \sum_i q_i u_i(p, t) +   \sum_i\sum_{j\in\mathcal{N}_i}H_{ij}((\dnabla u(p,t))_{i,j})q_i \frac{\partial}{\partial p_i}\theta_{ij}(p)  +\sum_i q_i F_i(p) \\
&\quad +\frac{1}{2}\sum_i\sum_{(j,k)\in E}  H'_{jk}((\dnabla u(p,t))_{j,k})(\dnabla\nabla_p u_i(p,t))_{j,k}\theta_{jk}(p) q_i= 0, \,\,\,   t\leq T, 
\end{split}
  \end{equation}
  Then take $U(q, p, t):=\sum_{i=1}^n q_i u_i(p, t),$  we have
   \begin{equation}\label{master_discrete_Ind_1001}
  \begin{aligned}
   &\pt_t U(q, p, t) +  \sum_{(i,j)\in E} H_{ij}\Big((\dnabla \nabla_q U(q, p, t))_{i,j}\Big) q_i\frac{\partial}{\partial p_i}\theta_{ij}(p)   + \sum_{i=1}^n q_i F_i(p) \\
&\hspace{2cm}+ \frac12\sum_{(i,j)\in E}  H'_{ij}\Big((\dnabla \nabla_qU(q, p, t))_{i,j}\Big)\cdot \Big(\dnabla\nabla_pU(q, p, t)\Big)_{i,j}\theta_{ij}(p)=0.
\end{aligned} 
 \end{equation}
 Therefore, we obtained
 $U(q,p,t)$ satisfies the following discrete mixed game master equation. 
 \begin{equation}\label{master_discrete_Ind_1}
   \left\{\begin{aligned}
   &\pt_t U(q, p, t) + \sum_{(i,j)\in E} H_{ij}\Big((\dnabla \nabla_q U(q, p, t))_{i,j}\Big) q_i\frac{\partial}{\partial p_i}\theta_{ij}(p)   + \sum_{i=1}^n q_i F_i(p) \\
&\hspace{2cm}+ \frac12\sum_{(i,j)\in E}  H'_{ij}\Big((\dnabla \nabla_qU(q, p, t))_{i,j}\Big)\cdot \Big(\dnabla\nabla_pU(q, p, t)\Big)_{i,j}\theta_{ij}(p)=0, \quad t\leq T, \\ 
&U(q, p, T)=\sum_{i=1}^n q_iG_i(p).  
\end{aligned}\right.
 \end{equation}
 Furthermore, Lemma \ref{lem:D_phi_u} then implies
\begin{equation}
    \pt_{q_i} U (q,p(\cdot, s),s) = u_i(p(\cdot, s),s) = \Phi_i(s).
\end{equation}

\section{Mean field game systems on a two-point space}\label{sec5}
In this section, we present several mean field game  examples on a two-point state space $\mathcal{X}=\{1,2\}$.
They illustrate the proposed discrete mean field game models by finding the first integral of the associated Hamiltonian systems. Then we find   solutions of the Hamiltonian systems by converting them into first-order equation \eqref{first-order}. For example, we compute Wasserstein-$\alpha$ distances on a two-point state space $\sX$. We also compute solutions in mean field planning problems and potential mean field games with general potential and terminal energies.

\subsection{Problem formulation}
We first apply the mean field game system \eqref{MFG_discrete} on a two-point state space, denoted as $\mathcal{X}=\{1,2\}$.  Given a weight $\omega_{12}=\omega_{21}>0$, denote $p=(p_1, p_2)^{T}\in \mathcal{P}(\mathcal{X})\subset \mathbb{R}^2$ as the population state density. We assume that $H_{12}(a)=H_{12}(-a)$, for $a\in\mathbb{R}^1$. Denote $\mathcal{F}, \mathcal{G}\colon\mathcal{P}(\mathcal{X})\rightarrow\mathbb{R}$ as potential and terminal energies on the finite probability space. Write $F_i(p)=\frac{\partial}{\partial p_i}\mathcal{F}(p)$ and $G_i(p)=\frac{\partial}{\partial p_i}\mathcal{G}(p)$, $i=1,2$. Concretely, we consider
 \begin{equation*}
 \mathcal{F}(p)=\frac{1}{2}W_{11}p_1^2+W_{12}p_1p_2+\frac{1}{2}W_{22}p_2^2, 
 \end{equation*} 
 where $W_{11}$, $W_{12}=W_{21}$, $W_{22}$, 
 are constants. 
 In this case, 
 \begin{equation*}
F_1(p)=W_{11}p_1+W_{12}p_2,
\quad 
F_2(p)=W_{21}p_1+W_{22}p_2. 
 \end{equation*}
We now present the discrete mean field game system \eqref{MFG_discrete} on  the two-point state space:  
\begin{equation*}
\left\{\begin{aligned}
&\frac{dp_1(s)}{ds} +\sqrt{\omega_{12}}\theta_{12}(p_s ) H'_{12}(\sqrt{\omega_{12}}(\Phi_2(s)-\Phi_1(s))) = 0, \\
&\frac{dp_2(s)}{ds} + \sqrt{\omega_{12}}\theta_{12}(p_s )  H'_{12}(\sqrt{\omega_{12}}(\Phi_1(s)-\Phi_2(s))) = 0, \\
&\frac{d\Phi_1(s)}{ds}  +H_{12}(\sqrt{\omega_{12}}(\Phi_2(s)-\Phi_1(s)))\frac{\partial \theta_{12}}{\partial p_1}(p_s )+  F_1(p_s )  =0, \\
&\frac{d\Phi_2(s)}{ds}  +H_{12}(\sqrt{\omega_{12}}(\Phi_1(s)-\Phi_2(s)))\frac{\partial \theta_{12}}{\partial p_2}(p_s )+  F_2(p_s )  =0, \\
& (p_1(t), p_2(t))=(p_1,p_2), \quad (\Phi_1(T), \Phi_2(T)) = (G_1(p_T), G_2(p_T)). 
\end{aligned}\right.
\end{equation*}

We demonstrate that the above mean field game dynamics can be further simplified. We can reduce the number of variables from $4$ to $2$. Denote $x(s): =p_1(s)\in [0,1]$, $p_2(s)=1-x(s)$, $y(s):=\Phi_1(s)-\Phi_2(s)$, $h=\sqrt{\omega_{12}}>0$, $H(a):=H_{12}(a)$, and $\theta(x):=\theta_{12}(p)$. Note $\frac{\partial}{\partial p_1}\theta_{12}(p)-\frac{\partial}{\partial p_2}\theta_{12}(p)=\frac{d}{dx} \theta(x)=\theta'(x)$. Thus, we rewrite equation \eqref{MFG_discrete} below:
\begin{equation}\label{1DODE}
\left\{\begin{aligned}
&\frac{dx_s}{ds} -h\theta(x_s) H'(h y_s) = 0, \\
&\frac{dy_s}{ds} +H(h y_s)\theta'(x_s)+ F_1(x_s)-F_2(x_s)  =0, \\
&x_t=x, \quad  y_T = G_1(x_T)-G_2(x_T). 
\end{aligned}\right.
\end{equation}

\subsection{Closed formula solutions}
From now on, we solve the mean field game ODE system \eqref{1DODE} in detail.  
Equation \eqref{1DODE} can also be recast as a Hamiltonian system:
\begin{equation}\label{HamiltonianSys}
 \frac{dx_s}{ds}=\frac{\partial}{\partial y}\mathcal{H}(x_s,y_s), \quad  \frac{dy_s}{ds}=-\frac{\partial}{\partial x}\mathcal{H}(x_s,y_s),  \quad t\leq s\leq T, 
 \end{equation}
where the Hamiltonian function for \eqref{HamiltonianSys} is given by
\begin{equation*}
\mathcal{H}(x,y)= H(hy)\theta(x)+ \bar F(x), \quad \textrm{with $\bar F'(x):=F_1(x)-F_2(x)$.}
\end{equation*}
 Hence $\mathcal{H}(x_s,y_s)\equiv \mathcal{H}_0$ for some constant $\mathcal{H}_0$. We solve an implicit function $y(x)$, such that 
\begin{equation*}
   \mathcal{H}(x,y(x))=\mathcal{H}_0.  
\end{equation*}
Therefore, $y$ is a function of $x$ satisfying 
\begin{equation*}
H(hy)=\frac{1}{\theta(x)}\Big(\mathcal{H}_0-\bar F(x)\Big). 
\end{equation*}
Based on the above formula, we reduce equation \eqref{1DODE} into a first-order ODE for $x_s, \, t\leq s\leq T$ and $\sH_0$,
\begin{equation}\label{first-order}
\begin{aligned}
 &\frac{dx_s}{ds} =f(x_s; \sH_0), \\
 &\quad \qquad  f(x; \sH_0) := \frac{\partial}{\partial y}\mathcal{H}(x,y(x; \sH_0)) \text{ with } y(x; \sH_0)=\frac1{h} H^{-1}\bbs{ \frac{\mathcal{H}_0-\bar F(x)}{\theta(x)} }, \\
 &x_t=x,\quad y(x_T)=G_1(x_T)-G_2(x_T).  
\end{aligned}
\end{equation}
Hence one can reduce this ODE as   algebraic equations for $x_T$ and $\sH_0$
\begin{equation*}
T-t=\int_x^{x_T} \frac{1}{f(x; \sH_0)} dx, \quad \theta(x_T) H(h (G_1(x_T)-G_2(x_T)))+ \Bar{F}(x)= \sH_0. 
\end{equation*}
One can use numerical schemes, such as a shooting method, to find a constant $\sH_0$  and $x_T$. 

In the following examples, 
we first consider a special case that the ending state $x_T$ is given, i.e., the optimal transport and the mean field planing problems. Then one only needs to solve $T-t=\int_x^{x_T} \frac{1}{f(x; \sH_0)} dx$. After that, we solve two potential mean field game problems.

\begin{example}[Discrete  
Wasserstein distances \cite{maas2011gradient}]\label{exam1}
Maas provides closed form solutions when  $L(a)=\frac{a^2}{2}$, {$H(a)=\frac{a^2}{2}$}, $F_1=F_2=0$, with initial-terminal time boundary conditions 
\begin{equation*}
x_t=p_1^0,\quad x_T=p_1^1,\quad t=0, \quad T=1,
\end{equation*}
where $p_1^0$, $p_1^1$ are constants in $[0,1]$. Note that $H^{-1}(a)=\sqrt{2a}$, $H'(a)=a$. Hence 
\begin{equation*}
f(x; \mathcal{H}_0)=h\sqrt{2\mathcal{H}_0\theta(x)}. 
\end{equation*}
In this case, we have
\begin{equation}\label{discrete_W2}
1=\int_{p_1^0}^{p_1^1} \frac{1}{f(x; \sH_0)} dx = \frac1{h \sqrt{2\sH_0}} \int_{p_1^0}^{p_1^1} \frac{1}{\sqrt{\theta(x)}} dx. 
\end{equation}
From the definition of the discrete Wasserstein distance in Definition \ref{def_metric}, we have
$$W(p_1^0, p_1^1) = \sqrt{2\sH_0} = \frac1h \int_{p_1^0}^{p_1^1} \frac{1}{\sqrt{\theta(x)}} dx.$$
\end{example}

\begin{example}[Generalized Wasserstein distances]
Consider a general function $H(a)$, $F_1=F_2=0$, with initial-terminal time boundary conditions 
\begin{equation*}
x_t=p_1^0,\quad x_T=p_1^1,\quad t=0, \quad T=1,
\end{equation*}
where $p_1^0$, $p_1^1$ are constants in $[0,1]$. Hence 
\begin{equation*}
f(x; \mathcal{H}_0)=h H'(H^{-1}(\frac{\mathcal{H}_0}{\theta(x)}))\theta(x). 
\end{equation*}
Thus
\begin{equation*}
1=\int_{p_1^0}^{p_1^1} \frac{1}{f(x; \sH_0)} dx = \frac1{h} \int_{p_1^0}^{p_1^1} \frac{1}{H'(H^{-1}(\frac{\mathcal{H}_0}{\theta(x)}))\theta(x)} dx. 
\end{equation*}
Particularly, take $H(a)=\frac{|a|^\beta}{\beta}$, $\beta>1$ and  $L(a)=\frac{|a|^{\alpha}}{\alpha}$ with $\frac{1}{\alpha}+\frac{1}{\beta}=1$. Then 
$H'(a)=a |a|^{\beta-2}$, $H^{-1}(a)=(\beta a)^{\frac{1}{\beta}}$. Hence 
\begin{equation*}
1=\int_{p_1^0}^{p_1^1} \frac{1}{f(x; \sH_0)} dx = \frac1{h\beta^{\frac{\beta-1}{\beta}}\mathcal{H}_0^{\frac{\beta-1}{\beta}}} \int_{p_1^0}^{p_1^1} \frac{1}{\theta(x)^\frac{1}{\beta}} dx. 
\end{equation*}
Based on the discrete version of Corollary \ref{cor_expU}, we have
$\frac{W^\alpha_\alpha}{\alpha}=(\beta-1) \sH_0 = \frac{\beta}\alpha \sH_0$. Thus
\begin{equation*}
 W_\alpha(p^0, p^1)=\beta^{\frac{1}{\alpha}}\mathcal{H}_0^{\frac{1}{\alpha}}=\frac1{h} \int_{p_1^0}^{p_1^1} \frac{1}{\theta(x)^\frac{1}{\beta}} dx.  \end{equation*}
If $\beta=\alpha=2$, it recovers the formula \eqref{discrete_W2} and the discrete Wasserstein-2 distance in Example \ref{exam1}. 
\end{example}

\begin{example}[Mean field planning problems]
We study a mean field planning problem \cite{OPS}, which  is a mean field game problem with fixed initial and terminal densities.    Consider $H(a)=\frac{|a|^2}{2}$ with 
 \begin{equation*}
F_1(x)=(W_{11}-W_{12})x+W_{12}, \quad F_2(x)=(W_{21}-W_{22})x+W_{22},
\end{equation*}
and boundary conditions 
\begin{equation*}
x_t=p_1^0,\quad x_T=p_1^1.   
\end{equation*}
In this case, 
\begin{equation*}
 \mathcal{H}(x,y)=\frac{y^2}{2}\theta(x)+\bar F(x), \quad \bar F(x)=\frac{1}{2}(W_{11}-W_{12}-W_{21}+W_{22})x^2+(W_{12}-W_{22})x. 
\end{equation*}
Hence 
\begin{equation*}
f(x; \mathcal{H}_0)=h\sqrt{2(\mathcal{H}_0-\bar F(x))\theta(x)}. 
\end{equation*}
Hence 
\begin{equation*}
1=\int_{p_1^0}^{p_1^1} \frac{1}{f(x; \sH_0)} dx = \frac{1}{h} \int_{p_1^0}^{p_1^1} \frac{1}{\sqrt{2(\mathcal{H}_0-\bar F(x))\theta(x)}} dx. 
\end{equation*}
We need to solve for a constant $\mathcal{H}_0$. This can be computed by Newton's iteration method. 
\end{example}

\begin{example}[Potential mean field games I]
We consider a simple potential mean field game dynamics with a zero potential energy $F_1=F_2=0$. And the terminal time boundary condition is $G=G_1-G_2$. Consider $H(a)=\frac{a^2}{2}$.
We also choose $t=0$, $T=1$ and initial state $x_{t=0}=p_1^0$,
where $p_1^0$, $p=p_1^1$ are constants in $[0,1]$. Note that $H^{-1}(a)=\sqrt{2a}$, $H'(a)=a$. Hence 
\begin{equation*}
f(x; \mathcal{H}_0)=h\sqrt{2\mathcal{H}_0\theta(x)}. 
\end{equation*}
Similarly, we need to solve for two constants $p\in [0,1]$, $\mathcal{H}_0$, such that
\begin{equation*}
\begin{aligned}
&1 = \frac1{h \sqrt{2\sH_0}} \int_{p_1^0}^{p} \frac{1}{\sqrt{\theta(x)}} dx,\\
&\frac{1}{h}\sqrt{2\mathcal{H}_0 }=\sqrt{\theta(p)}G(p).   
\end{aligned}
\end{equation*}
This reduces to an equation for $p$, such that
$$h^2 \sqrt{\theta(p)} G(p) = \int_{p_1^0}^{p} \frac{1}{\sqrt{\theta(x)}} dx.$$
This $p$ can be solved by Newton's iteration method. Then one can further solve $\sH_0$.
\end{example}

\begin{example}[Potential mean field games II]
We study a general potential mean field game with given  potential and terminal energies. Consider $H(a)=\frac{|a|^2}{2}$,
 \begin{equation*}
F_1(x)=(W_{11}-W_{12})x+W_{12}, \quad F_2(x)=(W_{21}-W_{22})x+W_{22}, 
\end{equation*}
and $G=G_1-G_2$.
We also choose $t=0$, $T=1$ and initial state $x_{t=0}=p_1^0$,
where $p_1^0$, $p=p_1^1$ are constants in $[0,1]$. Hence 
\begin{equation*}
f(x; \mathcal{H}_0)=h\sqrt{2(\mathcal{H}_0-\bar F(x))\theta(x)}. 
\end{equation*}
Hence we solve for constants $p\in [0,1]$, $\mathcal{H}_0$, such that
\begin{equation*}
\begin{aligned}
&1=\int_{p_1^0}^{p} \frac{1}{f(x; \sH_0)} dx = \frac{1}{h} \int_{p_1^0}^{p} \frac{1}{\sqrt{2(\mathcal{H}_0-\bar F(x))\theta(x)}} dx,\\ 
&\frac{1}{h}\sqrt{2(\mathcal{H}_0-\bar F(p))}=\sqrt{\theta(p)}G(p).   
\end{aligned}
\end{equation*}
Again Newton's iteration method can be used. 
\end{example}

\section{Discussion}
We consider various mean field games including potential games, non-potential games and mixed games for both continuous and discrete state space. The associated value functions (the optimal payoff function achieved by the optimal strategy) are in the framework of \textsc{Lasry and Lions}'s original idea that the optimal strategy for individual players are always given by feedback controls and thus the search for the Nash equilibrium can be reduced to finding   solutions to MFG system. In potential games,  those trajectories of MFG system are indeed the bi-characteristics of a functional HJE for both continuous and finite state space. The solution to a master equation for the value function describing the optimal payoff for both individual players and the population contains comprehensive information and can be used to construct an approximated solution to the $N$-player Nash system. We also give some variational derivations for master equations in various games. 

We highlight the choice of the controlled dynamics for finite state games is given by a continuity equation on graph with a nonlinear activation function. It is motivated by   gradient flow reformulations for the forward equation of a reversible Markov chain in both Onsager's strong form and a generalized gradient flow form. After introducing the finite state payoff function, all the functional HJE, MFG systems and master equations are derived with a variational principle that resembles the continuous case. Concrete examples, including Wasserstein-$\alpha$ distances, mean field planning problems, and potential  mean field games for two-point state space are given for various games with closed formula solutions.
The mean field game system and master equations on finite state spaces now have a comprehensive variational structure resembling the continuous version and thus can be directly used in modeling and computations with vast applications.

\section*{Acknowledgment} The authors would like to thank Wilfrid Gangbo for valuable suggestions and discussions.

\appendix

\section{Some proofs for continuous state MFGs in Section \ref{sec2}}\label{app}
In the appendix, we provide pedagogical proofs for lemmas and propositions in  Section \ref{sec2}.

 \begin{proof}[Proof of Proposition \ref{prop_indi_C}.]
Denote the value function given by the right-hand-side (RHS) of \eqref{optimal} as $J(x,t)$.  First, we prove the  value function $J(x,t)$ for any $C^1$ curve is always smaller than the solution $\Phi(x,t)$ to \eqref{HJE}.  For any $C^1$ curve $x_s$ with the velocity $\dot{x}_s$ for $t\leq s\leq  T$ and for any $p_s$, by definition of the Legendre transformation, we have
 \begin{align}\label{kk}
  \int_t^T \bbs{L( x_s, \dot{x}_s)  - F(x_s, \rho_s) }\ud s \geq \int_t^T \bbs{\dot{x}_s\cdot p_s - H( x_s, p_s)  - F(x_s, \rho_s) }\ud s.  
 \end{align}
 Using the   solution $\Phi(x,t)$ to \eqref{HJE}, we take the feedback control as $p_s=\nabla \Phi(x_s,s)$. By the fundamental theorem for integrals and \eqref{HJE},
\begin{equation}
 \begin{aligned}\label{kk1}
 &\int_t^T \bbs{\dot{x}_s\cdot p_s - H(x_s, p_s)  - F(x_s, \rho_s) }\ud s \\
 =& \Phi(x_T,T) -\Phi(x_t,t) = G(x_T,\rho_T)  - \Phi(x,t). 
 \end{aligned}
 \end{equation}
 Rearranging terms and taking supremum, this implies the solution $\Phi(x,t)$ to \eqref{HJE} always satisfies
 \begin{equation}
 \Phi(x,t) \geq \sup_{v_s,x_s} \bbs{G(x_T,\rho_T)- \int_t^T \bbs{L( x_s,v_s)  - F(x_s, \rho_s) }\ud s } = J(x,t).
 \end{equation}
 
 Second, we prove the optimal curve  in \eqref{optimal} is  given by
 \begin{equation} 
 \dot{x}_s = \pt_p H( x_s , \nabla \Phi(x_s, s)),\quad t\le s\le T; \quad x_t = x,
 \end{equation}
 where $\Phi(x,t)$ is the solution to \eqref{HJE}. In other words, the optimal curve is given by a Lagrangian graph where the velocity $v_s = \pt_p H( x_s, \nabla \Phi(x_s, s) )$ is a   function of $x_s$.
   Meanwhile, the associated maximum value equals $\Phi(x,t).$ Indeed, along ODE \eqref{ODE}, the optimality for the Legendre transformation is achieved for $p_s= \pt_v L(x_s, v_s) = \nabla \Phi(x_s,s)$, so
  \begin{align}
L(x_s, \dot{x}_s)  =  \pt_p H( x_s, \nabla \Phi(x_s, s) ) \cdot \nabla \Phi(x_s,s)   - H(x_s, \nabla \Phi(x_s,s)).  
 \end{align}
 Thus the equality in \eqref{kk} is achieved and same argument as \eqref{kk1} gives that  maximum value equals  $\Phi(x,t).$
 \end{proof}

\begin{proof}[Proof of Proposition \ref{prop1}]
 Denote the right-hand side of \eqref{optimal_MFG} as $J(\rho,t)$.
 First, we prove that for any $C^1$ curve $\rho_s$ in $\sP(\bT^d)$, $J(\rho,t)$  is always smaller than the solution $\sU(\rho,t)$ to \eqref{HJErho}.  Consider any $C^1$ curve $\rho_s,\, s\in[t,T]$ satisfying 
 $$\pt_s \rho_s + \nabla\cdot(\rho_s v_s) = 0$$
  with some velocity field $v_s(x)$ for $t\leq s\leq  T$. Then for any function $p_s(x)$,
   the running cost satisfies 
 \begin{align} \label{tt}
  \int_t^T \bbs{\int_{\bT^d} L(x, v_s(x)) \rho_s(x) \ud x - \sF(\rho_s) }   \ud s 
  \geq \int_t^T \bbs{\int_{\bT^d} \bbs{v_s(x) \cdot p_s(x) - H( x, p_s(x))}\rho_s(x) \ud x - \sF(\rho_s) }   \ud s.  
 \end{align}
 Taking the feedback control as $p_s(x)=\nabla_x\frac{\delta \sU}{\delta \rho}(\rho_s,x,s)$,   we have
 \begin{equation}
 \begin{aligned} 
 &\int_t^T \bbs{\int_{\bT^d} \bbs{ v_s(x) \cdot \nabla_x\frac{\delta \sU}{\delta \rho}(\rho_s,x,s)   - H(x, p_s(x))}\rho_s(x) \ud x - \sF(\rho_s) }   \ud s\\
 =& \int_t^T \bbs{\int_{\bT^d} \bbs{  - \nabla \cdot (v_s(x) \rho_s(x)) \frac{\delta \sU}{\delta \rho}(\rho_s,x,s)  - H(x, p_s) \rho_s(x)} \ud x - \sF(\rho_s) }   \ud s\\
 =& \int_t^T \bbs{\int_{\bT^d} \bbs{  \frac{\delta \sU}{\delta \rho}(\rho_s,x,s) \pt_s \rho_s(x)  - H(p_s, x) \rho_s(x)} \ud x - \pt_s \sU(\rho_s, s) + \pt_s \sU(\rho_s, s) - \sF(\rho_s) }   \ud s.
 \end{aligned}
  \end{equation}
 Notice 
 \begin{equation}
 \frac{\ud}{\ud s} \sU(\rho_s,s) = \pt_s \sU(\rho_s, s) + \int_{\bR_n} \frac{\delta \sU }{\delta \rho}(\rho_s,x,s)  \pt_s \rho_s(x) \ud x.
 \end{equation}
 Thus by the fundamental theorem for   integral and \eqref{HJErho}, we have
 \begin{equation}\label{tm2.29}
      \begin{aligned}
 &\int_t^T \bbs{\int_{\bT^d}    \frac{\delta \sU }{\delta \rho}(\rho_s,x,s) \pt_s \rho_s (x)  \ud x + \pt_s \sU(\rho_s, s) }\ud s
 + \int_t^T  \bbs{ -\pt_s \sU(\rho_s, s)  -\int_{\bT^d}  H( x, p_s(x)) \rho_s(x) \ud x - \sF(\rho_s) } \ud s\\
 =&  \sU(\rho_T, T) - \sU(\rho_t, t)= \sG(\rho_T) - \sU(\rho, t).
 \end{aligned}
  \end{equation}
 Rearranging terms and taking supremum, this implies the solution $\sU(\rho, t)$ to \eqref{HJErho} always satisfies
 \begin{equation}
 \sU(\rho, t) \geq J(\rho,t).
 \end{equation}
 
 Second, we prove that the optimal curve $\rho_s$ which  achieves the value function is given by the solution to the following equation
 \begin{equation}\label{mather}
 \pt_s \rho_s(x) =   \frac{\delta \sH}{\delta \Phi}(\rho_s, \frac{\delta \sU}{\delta \rho}(\rho_s,\cdot,s),x), \quad \rho_t=\rho.
 \end{equation}
 Here  $\sU(\rho,t)$ is the solution to \eqref{HJErho}.
 By the first equation in \eqref{H12}, the associated optimal velocity is $v_s(x) =  \pt_p H(\rho_s(x), p_s(x))$ with $p_s(x) = \nabla\frac{\delta \sU }{\delta \rho}(\rho_s,x,s)$. Meanwhile, the associated maximal value equals $\sU(\rho,t).$ Indeed, along   \eqref{mather} with $v_s(x) =  \pt_p H(\rho_s(x), \nabla\frac{\delta \sU }{\delta \rho}(\rho_s,x,s))$, we have
  \begin{align}
 \int_{\bT^d} L( x, v_s(x)) \rho_s(x) \ud x  
 =  \int_{\bT^d} \bbs{v_s(x) \cdot  \nabla_x\frac{\delta \sU}{\delta \rho}(\rho_s,x,s) - H(x, \nabla_x\frac{\delta \sU }{\delta \rho}(\rho_s,x,s))}\rho_s(x) \ud x.
 \end{align}
 Thus the equality in \eqref{tt} is achieved and the same argument as \eqref{tm2.29} yields that maximum profit equals  $\sU(\rho,t).$ 
 \end{proof}
 \begin{proof}[Proof of  Corollary  \ref{corHJEs}]
 From the second step in the proof of Proposition \ref{prop1}, we know  $\rho_s$ solved by \eqref{mather} and 
$ \Phi_s(x)= \frac{\delta \sU}{\delta \rho}(\rho_s,x,s)$ achieves the optimal value function and thus 
 is a solution to MFG system \eqref{MFG2}.
 
 \end{proof}
\begin{proof}[Proof of Corollary \ref{cor_expU}]
By the definition of Legendre transformation, we have
\begin{equation}
\begin{aligned}
        \int_t^T \bbs{\int_{\bT^d} L(x, v_s(x)) \rho_s(x) \ud x  }   \ud s 
  =& \int_t^T \bbs{\int_{\bT^d} \bbs{\pt_p H(x, p_s(x)) \cdot p_s(x) - H( x, p_s(x))}\rho_s(x) \ud x }   \ud s\\
  =& \int_t^T \int_{\bT^d} (\beta-1)H(x,p_s(x))\rho_s(x) \ud x   \ud s,
  \end{aligned}
\end{equation}
where we used the fact $p\pt_pH(x,p)= \beta H(x,p)$ due to homogeneous degree $\beta$.
Then the running cost can be represented as
\begin{align*}
    &\int_t^T \bbs{\int_{\bT^d} L(x, v_s(x)) \rho_s(x) \ud x - \sF(\rho_s) }   \ud s\\
    =& \int_t^T (\beta-1) \bbs{\int_{\bT^d}  H(x,p_s(x))\rho_s(x) \ud x + \sF(\rho_s)} \ud s-\beta \int_t^T \sF(\rho_s) \ud s.
\end{align*}
Taking $p_s(x)=\Phi_s(x)$, since $\sH$ is a constant along trajectory, denoted as $\sH_0$, then we have
\begin{equation}
    \sU(\rho, t) = \sG(\rho_T) -    (\beta-1) \sH_0 + \beta \int_t^T \sF(\rho_s) \ud s.
\end{equation}
\end{proof}
\begin{proof}[Proof of Lemma \ref{lemma27}.]
We directly verify \eqref{master} by taking the first variation w.r.t. $\rho$ in HJE \eqref{HJErho}.

First, for any test function $h$ with $\int h(x) \ud x =0$,
\begin{align}\label{h1}
  &\pt_t \frac{\ud}{\ud \eps} \Big|_{\eps=0}\sU(\rho+\eps h) = \int\pt_t   \frac{\delta \sU}{\delta\rho }(\rho,x,t) h(x) \ud x = \int[\pt_t   u(x,\rho,t) -\pt_t \beta(\rho,t)] h(x) \ud x,\\
  &\frac{\ud}{\ud \eps} \Big|_{\eps=0} \sF(\rho+\eps h) = \int \frac{\delta \sF}{\delta\rho }(\rho,x) h(x) \ud x.
\end{align}

Second,
 it remains to derive the first variation of $\mathcal{K}(\rho)=\int  H( x, \nabla_x u(x,\rho,t)) \rho(x) \ud x$ w.r.t. $\rho$. 
For any test function $h$ with $\int h(x) \ud x =0$,
\begin{equation*}
\begin{split}
&\mathcal{K}(\rho+\epsilon h) = \int H( x, \nabla_x u(x,\rho+\epsilon h,t)) (\rho(x)+\epsilon h(x)) \ud x\\
=&\int H(x, \nabla_x u(x,\rho,t))\rho(x)\ud x+\epsilon  \int H( x, \nabla_x u(x,\rho,t)) h(x) dx\\
&\quad+\epsilon\iint   D_pH(x, \nabla_x u(x,\rho,t))\nabla_x [\frac{\delta u}{\delta\rho } (x,\rho,y,t)] h(y)\rho(x)\ud x\ud y  +o(\epsilon)\\
=&\int H(x, \nabla_x u(x,\rho,t))\rho(x)\ud x+\epsilon  \int H( x, \nabla_x u(x,\rho,t)) h(x) dx\\
&\quad+\epsilon\int \Big[\int D_pH(y, \nabla_y u(y,\rho,t))\nabla_y[\frac{\delta u}{\delta\rho } (y,\rho,x,t)] \rho(y)\ud y\Big]h(x)\ud x  +o(\epsilon),
\end{split}
\end{equation*}
where the last equality holds by switch variables $x$, $y$.
This is because $u(x,\rho, t)$ is the first variation of $\mathcal{U}(\rho, t)$. Hence $\frac{\delta u}{\delta\rho } (y,\rho,x,t)=\frac{\delta u}{\delta\rho } (x,\rho,y,t)$.
Therefore, we obtain 
\begin{equation}\label{h3}
    \frac{\ud}{\ud \eps} \Big|_{\eps=0}\mathcal{K}(\rho+\epsilon h) = \int \bbs{H( x, \nabla_x u(x,\rho,t)) +  \int D_pH(y, \nabla_y u(y,\rho,t))\nabla_y[\frac{\delta u}{\delta\rho } (y,\rho,x,t)] \rho(y)\ud y}h(x) dx
\end{equation}
Combining \eqref{h3} and \eqref{h1}, from the mean zero constraint for the test function $h(x)$, one obtain
that 
\begin{equation}
\begin{aligned}
  \pt_t u(x,\rho,t) &- \pt_t \beta(\rho,t) + H( x, \nabla_x u(x,\rho,t)) +  \frac{\delta}{\delta\rho }\sF(\rho,x) \\
  &+  \int D_pH(y, \nabla_y u(y,\rho,t))\nabla_y[\frac{\delta u}{\delta\rho } (y,\rho,x,t)] \rho(y)\ud y  = C(\rho,t).
  \end{aligned}
\end{equation}
Then
one can choose $\beta$ such that $\pt_t \beta(\rho,t)=-C(\rho,t)$ \cite[page 69]{Gangbo_note}. It
  finishes the proof. 
\end{proof}
 \begin{proof}[Proof of Lemma \ref{lem_phi_u}]
  Let $(\rho_s(\cdot), \Phi_s(\cdot)), \, s\in[t,T]$ be a classical solution solving the mean field game system \eqref{MFG2} and let $u(x, \rho, t)$ be a classical solution  to master equation \eqref{gen_master}.
   Denote $ \tilde{\Phi}(x,s):= u(x, \rho_s, s)$. To compare $\tilde{\Phi}(x,s)$ and $\Phi_s(x)$, we denote $\Phi(x,s):=\Phi_s(x)$. 
   
   Taking the time derivative w.r.t. $s$, we have 
   \begin{equation}
\begin{aligned}
&\quad \partial_s\tilde{\Phi}(x,s) - \pt_s \Phi(x,s)
=\frac{d}{ds} u(x, \rho_s, s)- \pt_s \Phi(x,s)\\
&= \partial_s u(x,\rho_s, s)+\int \frac{\delta u}{\delta \rho} (x, \rho_s, y, s) \partial_s\rho_s(y)dy- \pt_s \Phi(x,s)\\
& = \partial_s u(x,\rho_s, s)+\int \frac{\delta u}{\delta \rho} (x, \rho_s, y, s) \Big(-\nabla_y\cdot(\rho_s(y)D_pH(y, \nabla_y\Phi(y,s))\Big)dy- \pt_s \Phi(x,s)\\
& = \partial_s u(x,\rho_s, s)+\int \nabla_y\frac{\delta u}{\delta \rho} (x, \rho_s, y, s) \cdot D_pH(y, \nabla_y \Phi(y,s)) \rho_s(y)dy- \pt_s \Phi(x,s)\\
& =   \int  \nabla_y\frac{\delta u}{\delta \rho} (x, \rho_s, y, s) \cdot [D_pH(y, \nabla_y \Phi(y,s))-D_pH(y, \nabla_y \tilde{\Phi}(y,s)) ] \rho_s(y)  dy\\
&\quad +H(x, \nabla_x\Phi(x,s))- H(x, \nabla_x \tilde{\Phi}(x,s)) + F(x, \rho_s) - F(x, \rho_s),
\end{aligned}
\end{equation}
where we used the master equation \eqref{gen_master} and \eqref{MFG2}.
Then we obtain  
\begin{equation}\label{uniqueMM}
\begin{aligned}
&\pt_s \Phi(x,s) + \int  \nabla_y\frac{\delta u}{\delta \rho} (x, \rho_s, y, s) \cdot  D_pH(y, \nabla_y \Phi(y,s))  \rho_s(y)  dy + H(x, \nabla_x\Phi(x,s))\\
=&
\partial_s\tilde{\Phi}(x,s) + \int  \nabla_y\frac{\delta u}{\delta \rho} (x, \rho_s, y, s) \cdot  D_pH(y, \nabla_y \tilde{\Phi}(y,s))   \rho_s(y)  dy+ H(x, \nabla_x \tilde{\Phi}(x,s))=:R(x,s)\\
\end{aligned}
\end{equation} 
First, since all terms on the right hand side of \eqref{uniqueMM} are given, \eqref{uniqueMM} can be regarded as a nonlocal HJE in terms of $\Phi(x,s)$ with a given forcing term $R(x,s)$.
Second, one can directly verify that $\Phi(x,s)=\tilde{\Phi}(x,s)$ solves  above equation with terminal  condition 
$$\Phi_T(x)= G(x, \rho_T)=u(x,\rho_T,T)=\tilde{\Phi}_T(x).$$
Then from the uniqueness of this nonlocal HJE \eqref{uniqueMM},   
   we obtain 
 $\tilde{\Phi}(x,s)=\Phi(x,s).$
\end{proof}

 \begin{proof}[Proof of Proposition \ref{prop_master_C_g}.]
 Denote the right-hand-side of \eqref{variational_m1} as $J(x,\rho,t)$.  
 First, for any  open-loop  velocity  $v_s$ for $t\leq s\leq  T$ and the associated trajectory
$x_s$   satisfying
$
\dot{x}_s =v_s, \quad t\leq s\leq T, \quad x_t = x,$
we   prove the value function $J(x, \rho,t)$ is always smaller then the solution $u(x, \rho,t)$ to the Master equation \eqref{gen_master}. 

Indeed, for any $p_s$,    we have
 \begin{align} \label{dyn3}
  \int_t^T \bbs{L( x_s, \dot{x}_s)  - F(x_s, \rho_s) }\ud s \geq \int_t^T \bbs{\dot{x}_s\cdot p_s - H( x_s, p_s)   - F(x_s, \rho_s) }\ud s.  
 \end{align}
 Using the   solution $u(x, \rho, t)$ to \eqref{gen_master}, we take the feedback control as $p_s=\nabla u(x_s,\rho_s, s)$, $t\leq s\leq T$. 
   Then by the fundamental theorem for integrals,
\begin{equation}\label{D_dyn3}
 \begin{aligned} 
 &\int_t^T \bbs{\dot{x}_s\cdot p_s - H(x_s, p_s)  - F(x_s, \rho_s) }\ud s \\
 =& \int_t^T \bbs{\dot{x}_s\cdot \nabla u(x_s,\rho_s, s) + \pt_s u (x_s, \rho_s, s) + \int \frac{\delta u}{\delta \rho}(x_s, \rho_s, y, s) \pt_s \rho_s(y)  \ud y } \ud s \\
 &+\int_t^T \bbs{ - \pt_s u (x_s, \rho_s, s) - \int \frac{\delta u}{\delta \rho}(x_s, \rho_s, y, s) \pt_s \rho_s(y)  \ud y - H(x_s, \nabla u(x_s,\rho_s, s))  - F(x_s, \rho_s) }\ud s\\
 =& u(x_T, \rho_T,T) -u(x,\rho, t)\\
 &+ \int_t^T \bbs{ - \pt_s u (x_s, \rho_s, s) - \int \frac{\delta u}{\delta \rho}(x_s, \rho_s, y, s) \pt_s \rho_s(y)  \ud y - H(x_s, \nabla u(x_s,\rho_s, s))  - F(x_s, \rho_s) }\ud s. 
 \end{aligned}
 \end{equation}
Notice $\rho_s$ also follows the continuity equation
\begin{equation}
\pt_s \rho_s(x) + \nabla\cdot(\rho_s(x) \pt_p H(x,\nabla \Phi_s(x))) = 0, \quad t\leq s\leq T, \quad  \rho_t = \rho,
\end{equation}
where $\Phi_s$ is   solved in \eqref{MFG2}.
  Then we have
  \begin{equation}
  \int \frac{\delta u}{\delta \rho}(x_s, \rho_s, y, s) \pt_s \rho_s(y)  \ud y =  \int \nabla_y   \frac{\delta u}{\delta \rho}(x_s, \rho_s, y, s) \cdot \pt_p H(y, \nabla\Phi_s(x)) \rho_s(y) \ud y.
  \end{equation}  
 Notice from Lemma \ref{lem_phi_u}, we have
 $\Phi_s(x)=u(x, \rho_s, s), \, t\leq s\leq T$ and thus 
  \begin{equation}
  \int \frac{\delta u}{\delta \rho}(x_s, \rho_s, y, s) \pt_s \rho_s(y)  \ud y =  \int \nabla_y   \frac{\delta u}{\delta \rho}(x_s, \rho_s, y, s) \cdot \pt_p H(y, \nabla_x u(x, \rho_s,s)) \rho_s(y) \ud y.
  \end{equation}
 Then using the master equation \eqref{gen_master}, the RHS in \eqref{D_dyn3} becomes
 $u(x_T, \rho_T,T) -u(x,\rho, t)$. Rearranging terms and taking supremum, this implies the solution $u(x,\rho,t)$ to the master equation \eqref{gen_master} always larger than the value function $J(x,\rho,t)$.
 
 Second,  for the special individual trajectory
\begin{align}\label{gen_master_vs}
\dot{x}_s = \pt_p H(x_s, \nabla_x u(x_s, \rho_s, s)), \quad t\leq s\leq T,
\end{align}
the equality in \eqref{dyn3} achieves and then same argument as \eqref{D_dyn3} gives that     the master equation solution $u(x, \rho, t)$ achieves the   optimal value function $J(x,\rho,t)$  with the optimal trajectory \eqref{gen_master_vs}. 
 \end{proof}

\begin{proof}[Proof of Lemma \ref{lem_phi_u_q}.]
We directly verify $U(q, \rho,t)$ satisfies \eqref{gen_master}.
Notice $\pt_t U(q, \rho, t) = \int \pt_t u(x, \rho, t) q(x) \ud x$, 
$$\int H(x, \nabla_x \frac{\delta U}{\delta q}  (q,x,\rho, t) ) q(x) \ud x = \int H(x, \nabla_x u (x,\rho, t) ) q(x) \ud x,$$
 and
\begin{align*}
&\int \nabla_y   \frac{\delta U}{\delta \rho}(q, \rho, y, t) \cdot \pt_p H(y, \nabla_y\frac{\delta U}{\delta q}  (q,y,\rho, t)) \rho(y) \ud y\\
=&
\int \bbs{\int \nabla_y   \frac{\delta u}{\delta \rho}(x, \rho, y, t) \cdot \pt_p H(y, \nabla_y u(y,\rho, t)) \rho(y) \ud y } q(x) \ud x.
\end{align*}
Then using \eqref{gen_master}, we obtain that $U(q, \rho, t)$ solves \eqref{gen_master2}.
The terminal   condition is directly given by
$$U(q, \rho, T) = \int u(x, \rho, T) q(x) \ud x = \int G(x, \rho) q(x) \ud x.$$
 In other words, $U(q, \rho,t)$ is consistent with $u(x,\rho,t)$ when $q=\delta_x.$

Moreover, from Lemma \ref{lem_phi_u}, we know along the trajectory $(\rho_s, \Phi_s), \, t\leq s\leq T$ of MFG system \eqref{MFG2},
\begin{equation}
\frac{\delta U}{\delta q} (q, x, \rho_s,s) =u(x, \rho_s, s)= \Phi_s(x), \quad t\leq s\leq T.
\end{equation}
\end{proof}

    \begin{proof}[Proof of Proposition \ref{prop_mix_value}.]
  First, let $(\rho_s(\cdot), \Phi_s(\cdot)), \, t\leq s\leq T$  solve  MFG system \eqref{MFG2} and denote the RHS of \eqref{variational_m2} as $J(q,\rho,t)$. For any  the velocity field $v_s(x)$ for $t\leq s\leq  T$ and the associated 
$q_s$ satisfying
\begin{align*}
\pt_s q_s+ \nabla\cdot(q_s v_s) = 0, \quad t\leq s\leq T, \quad q_t = q,
\end{align*}
we now prove the value function $J(q,\rho,t)$ is always smaller then the solution to the master equation \eqref{gen_master2}. 
Indeed, for any $p_s(x)$,    we have
\begin{equation}
 \begin{aligned} \label{dyn4}
  &\int_t^T \int\bbs{L( x, v_s(x))q_s(x)  - F(x, \rho_s)q_s(x) }\ud x\ud s\\
  \geq& \int_t^T \int \bbs{v_s(x)\cdot p_s(x) - H( x, p_s(x))   - F(x, \rho_s) }q_s(x) \ud x\ud s.  
 \end{aligned}
 \end{equation}
 Using the   solution $U(q, \rho, t)$ to \eqref{HJE}, we take the feedback control as $p_s(x)=\nabla_x\frac{\delta U}{\delta q} (q_s,x,\rho_s, s)$. 
   By the fundamental theorem for integrals,
\begin{equation}\label{D_dyn4}
 \begin{aligned} 
 &\int_t^T \int \bbs{v_s(x)\cdot p_s(x) - H( x, p_s(x))   - F(x, \rho_s) }q_s(x) \ud x\ud s \\
 =& \int_t^T  \bbs{\int v_s(x) \cdot \nabla_x \frac{\delta U}{\delta q} (q_s,x,\rho_s, s)    q_s(x) \ud x  + \pt_s U (q_s, \rho_s, s) + \int \frac{\delta U}{\delta \rho}(q_s, \rho_s, y, s) \pt_s \rho_s(y)  \ud y } \ud s \\
 &+\int_t^T   \bbs{ - \pt_s U (q_s, \rho_s, s) - \int \frac{\delta U}{\delta \rho}(q_s, \rho_s, y, s) \pt_s \rho_s(y)  \ud y - \int [H(x, \nabla_x\frac{\delta U}{\delta q} (q_s,x,\rho_s, s))  + F(x, \rho_s) ]q_s(x)\ud x} \ud s\\
  =& \int_t^T \int -\nabla \cdot(q_s(x) v_s(x))    \frac{\delta U}{\delta q} (q_s,x,\rho_s, s)    \ud x + \bbs{ \pt_s U (q_s, \rho_s, s) + \int \frac{\delta U}{\delta \rho}(q_s, \rho_s, y, s) \pt_s \rho_s(y)  \ud y }  \ud s \\
 &+\int_t^T   \bbs{ - \pt_s U (q_s, \rho_s, s) - \int \frac{\delta U}{\delta \rho}(q_s, \rho_s, y, s) \pt_s \rho_s(y)  \ud y - \int [H(x, \nabla_x\frac{\delta U}{\delta q} (q_s,x,\rho_s, s))  + F(x, \rho_s) ]q_s(x)\ud x} \ud s\\
 =& U(q_T, \rho_T,T) -U(q,\rho, t)\\
 &+\int_t^T   \bbs{ - \pt_s U (q_s, \rho_s, s) - \int \frac{\delta U}{\delta \rho}(q_s, \rho_s, y, s) \pt_s \rho_s(y)  \ud y - \int [H(x, \nabla_x\frac{\delta U}{\delta q} (q_s,x,\rho_s, s))  + F(x, \rho_s) ]q_s(x)\ud x} \ud s. 
 \end{aligned}
 \end{equation}
Notice $\rho_s$ also follows the continuity equation  
\begin{equation}
\pt_s \rho_s(x) + \nabla\cdot(\rho_s(x) \pt_p H(x,\nabla \Phi_s(x))) = 0, \quad t\leq s\leq T, \,\quad \rho_t = \rho,
\end{equation}
where $\Phi_s$ is coupled solved in \eqref{MFG2}.
  Then we have
  \begin{equation}
  \begin{aligned}
  &\int \frac{\delta U}{\delta \rho}(q_s, \rho_s, y, s) \pt_s \rho_s(y)  \ud y \\
  =& \int \nabla_y   \frac{\delta U}{\delta \rho}(q_s, \rho_s, y, s) \cdot \pt_p H(y, \nabla_y\Phi_s(y)) \rho_s(y) \ud y\\
  =&  \int \nabla_y   \frac{\delta U}{\delta \rho}(q_s, \rho_s, y, s) \cdot \pt_p H(y, \nabla_y\frac{\delta U}{\delta q}  (q_s,y,\rho_s, s)) \rho_s(y) \ud y,
  \end{aligned}
  \end{equation}
  where we used \eqref{U_Phi_C} in the last equality.
 Then using the master equation \eqref{gen_master2}, the RHS in \eqref{D_dyn4} becomes
 $U(q_T, \rho_T,T) -U(q,\rho, t)$.
Rearranging terms and taking supremum, this implies the solution $U(q,\rho,t)$ to the master equation \eqref{gen_master2} always larger than the value function $J(q,\rho,t)$.
 
 Second,  for the special velocity field for the distribution of the individual player
\begin{align}\label{tra_mix_t}
v_s(x) = \pt_p H\bbs{x , \nabla_x \frac{\delta U}{\delta q}(q_s, x, \rho_s, s)},\quad t\leq s\leq T,
\end{align}
the equality in \eqref{D_dyn4} achieves and then same argument gives that    the  solution $U(q,   \rho, t)$ to the mixed game master equation \eqref{gen_master2}  achieves the   maximal value function $J(q,\rho,t)$  along the trajectory \eqref{tra_mix_t}. The last statement directly comes from Lemma \ref{lem_phi_u_q}.
 \end{proof}

\end{document}